\documentclass[11pt, reqno]{amsart}
\usepackage{amsmath, bbm, amsthm, amssymb, amsfonts, enumerate, graphicx}
\usepackage[bookmarks=false]{hyperref}
\usepackage[usenames,dvipsnames]{color}
\usepackage{fullpage}
\def\Xint#1{\mathchoice
   {\XXint\displaystyle\textstyle{#1}}%
   {\XXint\textstyle\scriptstyle{#1}}%
   {\XXint\scriptstyle\scriptscriptstyle{#1}}%
   {\XXint\scriptscriptstyle\scriptscriptstyle{#1}}%
   \!\int}
\def\XXint#1#2#3{{\setbox0=\hbox{$#1{#2#3}{\int}$}
     \vcenter{\hbox{$#2#3$}}\kern-.5\wd0}}
\newcommand{\fint}{\Xint-}

\theoremstyle{definition}
\newtheorem{thm}{Theorem}

\newtheorem{lemma}[thm]{Lemma}
\newtheorem{cor}[thm]{Corollary}
\newtheorem{defi}[thm]{Definition}
\newtheorem{remark}[thm]{Remark}
\newtheorem{example}[thm]{Example}
\newtheorem{question}[thm]{Question}
\newtheorem{prop}[thm]{Proposition}
\newtheorem{claim}[thm]{Claim}
\numberwithin{thm}{section}

\newcommand{\id}{\operatorname{id}}
\newcommand{\E}{\mathbb E}
\newcommand{\R}{\mathbb R}

\newcommand{\Z}{\mathbb Z}
\newcommand{\N}{\mathbb N}
\newcommand{\mg}{\mathfrak{g}}
\newcommand{\Heis}{\text{Heis}}
\newcommand{\norm}[1]{\left\vert #1 \right \vert}	
\newcommand{\Norm}[1]{\left\Vert #1 \right \Vert}

\newcommand{\comment}[1]{}

\newcommand{\st}{\;:\;}

\newcommand{\V}{\mathcal V}
\newcommand{\HH}{G^{\perp}}
\newcommand{\cN}{\mathcal N}
\newcommand{\cT}{\mathcal{T}}

\newcommand{\cL}{\mathcal{L}}

\DeclareMathOperator{\NH}{NH}

\bibliographystyle{amsplain}

\title{Separated Nets in Nilpotent Groups}
\author{Tullia Dymarz}
\address{Department of Mathematics \\ University of Wisconsin-Madison \\ 480 Lincoln Drive, Madison, WI 53706}
\email{dymarz@math.wisc.edu}
\author{Michael Kelly}
\address{Department of Mathematics \\ University of Michigan \\ 530 Church Street, Ann Arbor, MI 48109}
\email{michaesk@umich.edu}
\author{Sean Li}
\address{Department of Mathematics, The University of Chicago, Chicago, IL 60637}
\email{seanli@math.uchicago.edu}
\author{Anton Lukyanenko}
\address{Department of Mathematics \\ University of Michigan \\ 530 Church Street, Ann Arbor, MI 48109}
\email{Anton@Lukyanenko.net}
\thanks{T.D.\ was supported by NSF grant DMS-1207296.  S.L.\ was supported by NSF grant DMS-1600804. A.L.\ was supported by NSF RTG grant  DMS-1045119. M.K.\ was supported by NSF RTG grants DMS-1045119 and DMS-0943832.}

\begin{document}
\begin{abstract}In this paper we generalize several results on separated nets in Euclidean space to separated nets in connected simply connected nilpotent Lie groups. We show that every such group $G$ contains separated nets that are not biLipschitz equivalent. We define a class of separated nets in these groups arising from a generalization of the cut-and-project quasi-crystal construction and show that generically any such separated net is bounded displacement equivalent to a separated net of constant covolume.  In addition, we use a generalization of the Laczkovich criterion to provide `exotic' perturbations of such separated nets.
\end{abstract}
\maketitle 

\section{Introduction}
A subset $Y$ of a metric space $(X,d)$ is a {\it separated net} (or Delone set) if for some $0<c<C$, any two $c$-balls centered at distinct elements of $Y$ are disjoint (i.e. $Y$ is uniformly discrete), and the $C$-neighborhood of $Y$ is all of $X$ (i.e. $Y$ is coarsely dense). Separated nets are important in coarse geometry \cite{gromov1996geometric}, dynamical systems \cite{S1}, and mathematical physics \cite{BG}. In particular, in coarse geometry two spaces are considered equivalent (quasi-isometric) if they contain separated nets that are biLipschitz equivalent.

\begin{defi}[BL equivalence]A function $f : (X_1,d_{X_1}) \rightarrow ({X_2},d_{X_2})$ is $L$-biLipschitz if
\begin{align*}
  L^{-1} d_{X_1}(x,x') \leq d_{X_2}(f(x),f(x')) \leq L d_{X_1}(x,x') \qquad \forall x,x'  \in X_1.
\end{align*}
The metric spaces ${X_1}, {X_2}$ are BL equivalent if there exists a BL bijection between them.
\end{defi}

A fundamental question of Gromov \cite{gromov1996geometric} asks whether the choice of separated net matters: are any two separated nets $Y_1, Y_2$ in a metric space BL equivalent? 

A positive answer is known in some spaces: in $\R$, it is easy to find a BL map from any separated net to the integer lattice, and by Whyte's work in \cite{Whyte} any two separated nets in a \emph{non-amenable} metric space with \emph{bounded geometry}  (see \S \ref{sec:whyte} for definitions), are in fact BD equivalent:

\begin{defi}[BD equivalence]For two subsets $Y_1, Y_2 \subset (X,d)$ of a metric space, a function $f : Y_1\rightarrow Y_2$ is $L$-bounded-displacement for $C \geq 0$ if
\begin{align*}
   d(f(x),x) \leq C \qquad \forall x  \in Y_1.
\end{align*}
The subsets  ${Y_1}, {Y_2}$ are BD equivalent if there exists a BD bijection between them.
\end{defi}
For separated nets, BD equivalence implies BL equivalence but the converse is not necessarily true. Even in $\R^n$, is not hard to see that the separated nets $\Z^n$ and $(2\Z)^n$ are not BD equivalent. 
More surprisingly, for $n\geq 2$, not all separated nets in $\R^n$ are BL equivalent to $\Z^n$ \cite{BK,mcmullen1998lipschitz}. On the other hand, certain naturally-arising separated nets in $\R^n$, such as quasi-crystals arising from cut-and-project constructions in dynamics, generically do not exhibit this behavior \cite{BK2002, HKW}. 

In this paper, we generalize these results to separated nets in nilpotent Lie groups (assumed connected, simply connected, and equipped with a left-invariant Riemannian metric and Haar measure). Namely, for each such Lie group $G$ we provide a natural class of separated nets ($\Lambda$-nets) and show that a generic cut-and-project quasi-crystal in $G$ is BD equivalent to such a net. On the other hand, we generalize the results of Burago-Kleiner and McMullen to exhibit nets that are not BL equivalent to a $\Lambda$-net. 
Along the way we prove some auxiliary results that can be used to further our understanding of separated nets in $G$.


\subsection{Summary of Results}

\comment{
exp coords & def of l-nets
l-nets are nice, when BD to e.o. and lattices (use projection systems)
not BL to l-nets
Laczkovich-Whyte criterion
qcrystals are mostly l-nets
Laczkovich-Whyte criterion on dyadic tiles
exotic nets exist
}

Let $G$ be a nilpotent Lie group (always assumed to be connected and simply connected). In this setting, the exponential map from the Lie algebra $\mathfrak{g}$  to $G$ is a bijection, and so we can identify $G$ with $\R^n\simeq \mathfrak{g}$ with a group law of the form
$$(a_1,\ldots a_n)*(a_1', \ldots, a_n') = (a_1+a_1'+p_1, \ldots , a_n + a_n' + p_n)$$
where $p_i$ are polynomials in lower-indexed variables. These are known as \emph{exponential coordinates} on $G$. 
\begin{defi}
Let $\Lambda = (\lambda_1, \ldots, \lambda_n)$ have positive entries, and let $G$ be a nilpotent Lie group with exponential coordinates. By a \emph{$\Lambda$-net}, we mean the set $G(\Lambda)$ of points $(a_1, \ldots, a_n)\subset G$ such that $a_i \in \lambda_i \Z$ for each $i$. If each $\lambda_i=1$ we write $G(\Z)$. By \emph{$\Lambda$-box} we mean $[\lambda_1/2,\lambda_1/2)\times \ldots \times [-\lambda_n/2,\lambda_n/2)$. We denote the volume of the $\Lambda$-box by $\norm{\Lambda}$, and following the terminology for lattices, refer to it as the \emph{covolume} of $G(\Lambda)$ in $G$.
\end{defi}

In general $G(\Lambda)$ (and even $G(\Z)$) are not subgroups of $G$ (see \cite{Malcev} or \S \ref{sec:nilp} for more details) but they are always separated nets. 
The benefit of the $\Lambda$-net construction is that it gives us separated nets of all covolumes in any nilpotent Lie group.  In contrast, many nilpotent Lie groups do not have lattices and only some contain lattices of all covolumes \cite{Kuranishi}.  We call those that do contain lattices \emph{rational} nilpotent Lie groups. 

\begin{thm}
\label{thm:LambdaIsNice}
Every $\Lambda$-net is a  net and has an associated tiling by translates of the $\Lambda$-box. The nets $G(\Lambda_1)$ and $G(\Lambda_2)$ are BD equivalent if and only if $\norm{\Lambda_1}=\norm{\Lambda_2}$. Every lattice $\Gamma \subset G$ is BD to any $G(\Lambda)$ for which $\norm{\Lambda}$ is equal to the lattice covolume of $\Gamma$.
\end{thm}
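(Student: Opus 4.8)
The plan is to prove the tiling and net statements directly from the triangular form of the group law, to extract from this a general matching lemma, to deduce both ``if'' statements from the lemma, and to settle the converse by a volume count. For the first assertion: in exponential coordinates multiplication is triangular, $(x*y)_i=x_i+y_i+p_i$ with $p_i$ a polynomial in $x_1,\dots,x_{i-1},y_1,\dots,y_{i-1}$. Let $W=\prod_{i=1}^{n}[-\lambda_i/2,\lambda_i/2)$ be the $\Lambda$-box, so $0\in W$. I would first verify that $\{\,g*W:g\in G(\Lambda)\,\}$ partitions $G$: given $x\in G$, solve $x=g*y$ with $g\in\Lambda\Z^{n}$ and $y\in W$ coordinate by coordinate, noting that at step $i$ the number $p_i$ is already determined by the coordinates chosen so far, so $g_i+y_i=x_i-p_i$ has a unique solution with $g_i\in\lambda_i\Z$ and $y_i\in[-\lambda_i/2,\lambda_i/2)$. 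Since $\overline{W}$ is bounded in $\R^{n}$ it has finite $d$-diameter $D$, and as left translations are $d$-isometries every tile $g*W$ has $d$-diameter $D$ and contains $g$; hence $G(\Lambda)$ is $D$-coarsely dense. For uniform discreteness: if $g\neq g'$ in $G(\Lambda)$ then $g*W$ and $g'*W$ are disjoint, so $W$ and $h*W$ are disjoint with $h:=g^{-1}*g'\neq e$; since $h\in h*W$ this gives $h\notin W$, and as $0$ is in the Euclidean interior of $W$ the Euclidean norm of $h$ is at least some fixed $\varepsilon>0$. Because the left-invariant Riemannian distance induces the standard topology on $G\cong\R^{n}$, a fixed ball $B_d(e,\delta)$ lies in the Euclidean $\varepsilon$-ball, whence $d(g,g')=d(e,h)\ge\delta$ for all $g\neq g'$ in $G(\Lambda)$.

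\emph{A matching lemma.} Next I would prove: if $Y_1,Y_2\subset G$ each admit a tiling of $G$ by Borel sets of $d$-diameter $\le D_0$, each containing its point and all of common volume $v$, then $Y_1$ and $Y_2$ are BD equivalent. Form the bipartite graph on $Y_1\sqcup Y_2$ joining points at $d$-distance $\le 2D_0$; it is locally finite because a ball of fixed radius has finite Haar measure, which bounds the number of disjoint volume-$v$ tiles it can contain. Hall's condition holds on each side: for finite $A\subseteq Y_1$, the union $U$ of the tiles of $A$ has volume $v\cdot\#A$; the $Y_2$-tiles meeting $U$ cover $U$ and have their points within $2D_0$ of $A$, so there are at least $\operatorname{vol}(U)/v=\#A$ of them, giving $\#A$ neighbours of $A$, and symmetrically for subsets of $Y_2$. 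Then a matching saturating $Y_1$ and one saturating $Y_2$ exist (finite Hall together with a compactness argument), and the symmetric difference of two such matchings is a disjoint union of even cycles and infinite paths with no finite path component, which can be adjusted to a perfect matching; this is a bijection $Y_1\to Y_2$ moving every point at most $2D_0$. Only invariance of $d$ and of Haar measure under left translation is used here, so no volume asymptotics are needed.

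\emph{The two ``if'' statements.} If $\norm{\Lambda_1}=\norm{\Lambda_2}$, then by the first part both $G(\Lambda_1)$ and $G(\Lambda_2)$ tile $G$ by translates of boxes of that common volume, so the lemma gives a BD bijection between them. If $\Gamma\subset G$ is a lattice (hence a net, being discrete and cocompact), then cocompactness yields a bounded Borel fundamental domain $F$, which may be chosen to contain $e$, of volume equal to the lattice covolume of $\Gamma$; then $\{\gamma F:\gamma\in\Gamma\}$ is a tiling of the required type, and when $\norm{\Lambda}$ equals this covolume the lemma produces a BD bijection $\Gamma\to G(\Lambda)$.

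\emph{The converse, and the main obstacle.} Finally, let $f:G(\Lambda_1)\to G(\Lambda_2)$ be a BD bijection of displacement $\le C$, and write $D_i$ for the $d$-diameter of the $\Lambda_i$-box. Counting net points in $d$-balls via the two tilings, together with $f(G(\Lambda_1)\cap B_d(e,R))\subseteq G(\Lambda_2)\cap B_d(e,R+C)$ and injectivity of $f$, yields
\[
\frac{\operatorname{vol} B_d(e,R-D_1)}{\norm{\Lambda_1}}\;\le\;\#\big(G(\Lambda_1)\cap B_d(e,R)\big)\;\le\;\#\big(G(\Lambda_2)\cap B_d(e,R+C)\big)\;\le\;\frac{\operatorname{vol} B_d(e,R+C+D_2)}{\norm{\Lambda_2}},
\]
so $\norm{\Lambda_2}/\norm{\Lambda_1}\le\operatorname{vol} B_d(e,R+C+D_2)/\operatorname{vol} B_d(e,R-D_1)$, and the reverse inequality by applying the same to $f^{-1}$. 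The step I expect to be the real obstacle is letting $R\to\infty$: this requires that the ratio of the volumes of two concentric $d$-balls whose radii differ by a fixed constant tends to $1$, for which I would invoke the sharp polynomial growth $\operatorname{vol} B_d(e,R)\sim\kappa R^{Q}$ (with $Q$ the homogeneous dimension) of a connected simply connected nilpotent Lie group, due to Pansu. Granting this, the ratio tends to $1$, the two inequalities force $\norm{\Lambda_1}=\norm{\Lambda_2}$, and the proof is complete.
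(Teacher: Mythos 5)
Your proposal is correct, and it takes a different route in both halves. For the tiling, the paper proves Lemma~\ref{lemma:lambda-net} by induction on dimension using the projection system $\pi:G\to G'=G/X_n$, showing the $G(\Lambda)$-translates of $I_\Lambda$ separate into columns over tiles of $G'(\Lambda')$; you instead observe that the triangular form of the group law lets you solve $x=g*y$ uniquely for $(g,y)\in G(\Lambda)\times W$ by working coordinate-by-coordinate. This is more direct and arguably the cleanest way to see the partition (the induction is useful elsewhere in the paper because it dovetails with the face decomposition of Lemma~\ref{lemma:faces} and the weighted-perimeter setup, but that structure is unnecessary here). For the BD statements, the paper simply defers to the ``elementary argument'' of \cite[Proposition~2.1]{HKW}; you supply that argument in full: a Hall-type matching lemma on the bipartite graph joining tile-centers within $2D_0$ (Hall's condition verified by volume counting, local finiteness from Haar measure, and a Schr\"oder--Bernstein cleanup of $M_1\triangle M_2$), followed by the converse via comparing $\#(G(\Lambda_i)\cap B_d(e,R))$ with $\operatorname{vol}B_d(e,R)/\norm{\Lambda_i}$ and letting $R\to\infty$. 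Your use of Pansu's asymptotic $\operatorname{vol}B_d(e,R)\sim\kappa R^Q$ to kill the error terms is clean but stronger than needed; one can also run the converse through the machinery already in the paper by noting that any BD bijection changes $\#(Y\cap A)$ by at most a multiple of $p(A)$ (Claim~\ref{lemma:discrepancypreserved}) and that $\Lambda$-nets have density $\norm{\Lambda}$ (Lemma~\ref{lemma:constantCovolume}), then choosing sets $A$ with $p(A)/\operatorname{vol}(A)\to 0$, which avoids quoting the sharp volume asymptotic. Either route is fine; yours is self-contained and elementary, while the paper's citation keeps the proof short at the price of an external reference.
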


This theorem follows from Lemma \ref{lemma:lambda-net} and an elementary argument (see \cite[Proposition 2.1]{HKW}).\\


\noindent{}{\bf BL-equivalence.} 
In \S \ref{sec:BL} we prove the following theorem:

\begin{thm}\label{thm:BLequiv}
Let $G\neq \R$ be a connected simply connected nilpotent Lie group and $Y_1\subset G$ a $\Lambda$-net. Then there exists a separated net $Y_2\subset G$ that is not BL equivalent to $Y_1$.
\end{thm}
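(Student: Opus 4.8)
The plan is to reduce the statement to the Euclidean theorems of Burago--Kleiner \cite{BK} and McMullen \cite{mcmullen1998lipschitz} by transporting a ``non-realizable'' volume-distortion function along the abelianization of $G$. The abelian case $G=\R^n$ ($n\geq 2$) is immediate from \cite{BK,mcmullen1998lipschitz}: there $Y_1 = G(\Lambda)$ is a lattice, affinely and hence biLipschitz equivalent to $\Z^n$, so any Burago--Kleiner net serves as $Y_2$. So assume $G$ is non-abelian.

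First, record a dimension reduction: the abelianization $\pi\colon G\to A:=G/[G,G]$ has $k:=\dim A\geq 2$. (A nilpotent Lie algebra with one-dimensional abelianization is one-dimensional, since $\R X + [\mg,\mg]=\mg$ forces $[\mg,\mg]\subseteq[\mg,[\mg,\mg]]$, so $\mg_2=\mg_3=\cdots=0$ by nilpotency.) Choosing the exponential coordinates of the paper adapted to the lower central series so that $[G,G]$ is spanned by the last $n-k$ coordinates, $\pi$ becomes the linear projection $(a_1,\dots,a_n)\mapsto(a_1,\dots,a_k)$; thus $\pi$ is a Lie-group homomorphism, a smooth submersion, Lipschitz from $(G,d_G)$ to $\R^k$, and carries Haar measure (which is Lebesgue measure in these coordinates) onto Lebesgue measure on $A$ with Lebesgue fibers. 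The fibers of $\pi$ are the cosets of $[G,G]$, each a union of $\Lambda$-boxes of $Y_1$.

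Next, assemble two Euclidean inputs. (a) By the interpolation argument of \cite{BK} (see also \cite[\S 2]{HKW}), a biLipschitz bijection between separated nets of $G$ extends to a $d_G$-biLipschitz self-homeomorphism of $G$; so if $Y_2$ is biLipschitz equivalent to $Y_1$ there is a $d_G$-biLipschitz homeomorphism $F$ of $G$ whose Jacobian with respect to Haar measure is a function $\rho$, bounded between two positive constants, equal to (a normalization of) the asymptotic relative density of $Y_2$ with respect to $Y_1$. (b) By \cite{BK,mcmullen1998lipschitz}, for each $k\geq 2$ there is a measurable $\bar\rho\colon\R^k\to[1,1+\delta_0]$ that is not the Jacobian of any biLipschitz homeomorphism of $\R^k$. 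Using the $\Lambda$-box tiling of $G$ from Theorem \ref{thm:LambdaIsNice}, and rescaling $\bar\rho$ to be constant on blocks of $\Lambda$-boxes, one builds $Y_2\subset G$ by adding or deleting in each block, uniformly along the fibers of $\pi$, the $O(\delta_0)$-proportion of net points prescribed by $\bar\rho$; since the density stays in $[1,1+\delta_0]$, $Y_2$ is a separated net, and its relative density with respect to $Y_1$ is $\bar\rho\circ\pi$.

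It then remains to show that no $d_G$-biLipschitz homeomorphism $F$ of $G$ has Jacobian $\bar\rho\circ\pi$. The idea is to descend $F$ to a biLipschitz homeomorphism $\bar F$ of $\R^k$ with Jacobian $\bar\rho$, contradicting (b): set $\bar F(p)=\lim_{R\to\infty}\fint_{\pi^{-1}(p)\cap B_R}\pi(F(x))\,dx$; then the disintegration of Haar measure forces the Jacobian of $\bar F$ to be $\bar\rho$, and the upper Lipschitz bound for $\bar F$ comes from that of $\pi\circ F$ together with the co-Lipschitz property of $\pi$. \textbf{The hard part} is the \emph{lower} biLipschitz bound for $\bar F$, which amounts to showing that $F$ is coarsely fiber-preserving: $d_A(\pi F(x),\pi F(x'))=O(1)$ whenever $\pi(x)=\pi(x')$. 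This is genuinely where non-abelianity is used --- for $G=\R^n$ it fails (shear maps $(x,y)\mapsto(x+\phi(y),y)$ with $\phi$ Lipschitz and unbounded) --- and it is not a formal property of quasi-isometries, since cosets of $[G,G]$ are polynomially distorted in $G$; rather, it reflects the coarse intrinsicness of $[G,G]$ in a simply connected nilpotent Lie group: the asymptotic cone of $G$ is its associated Carnot group, a self-quasi-isometry of $G$ induces a biLipschitz self-map of that Carnot group, which is Pansu-differentiable almost everywhere with differential a graded automorphism, and graded automorphisms preserve the first stratum and hence the commutator subgroup; integrating, every self-quasi-isometry of $G$ coarsely permutes the $[G,G]$-cosets. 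Establishing this coarse-rigidity step cleanly is the main technical obstacle; granting it, together with the routine verifications in (a) and (b), completes the proof.
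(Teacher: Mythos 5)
Your strategy---reduce to the abelianization $G/[G,G]\cong\R^k$ and import a Euclidean non-Jacobian from \cite{BK,mcmullen1998lipschitz}---is genuinely different from what the paper does, and it is not made to work. The paper does \emph{not} descend to the abelianization; it constructs a non-Jacobian directly on the associated Carnot group (Theorem \ref{t:non-jac}), adapting the Burago--Kleiner oscillation argument with two nontrivial modifications specific to the non-abelian setting: a special $p$-convex semi-metric (Proposition \ref{p:convex-norm}) to control the interplay between horizontal and non-horizontal displacements, and an averaging argument (over the whole tube $R_\lambda$) replacing the pigeonhole, because nearby horizontal lines in a Carnot group drift apart at rate $\eta^{1/r}$ (Lemma \ref{l:drift}). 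The conversion from net to Jacobian is then done by rescaling and passing to the asymptotic cone (Theorem \ref{t:bk}), using Christ's dyadic cubes and a delicate weak-$*$ limit; it is \emph{not} an extension of the net bijection to a bi-Lipschitz self-homeomorphism of $G$, which is what your item (a) asserts and which is not how Burago--Kleiner actually proceeds (the limit map lives only on a unit ball of the cone, and its Jacobian is a threshold function, not literally the density).

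The decisive gap is exactly the step you flag as ``the hard part.'' To descend a bi-Lipschitz self-map $F$ of the Carnot group to a bi-Lipschitz self-map of $\R^k$ you need the coarse-rigidity statement that $F$ permutes the $[G,G]$-cosets up to bounded displacement, i.e.\ $d_A(\pi F(x),\pi F(x'))=O(1)$ whenever $\pi(x)=\pi(x')$, with the bound uniform over the unbounded fiber. Your sketch via Pansu differentiability does not deliver this. Pansu's theorem gives an almost-everywhere \emph{pointwise} (infinitesimal) differential which is a graded automorphism, but there is no ``integration'' step that upgrades an a.e.\ infinitesimal statement to a uniform bound along the entire (infinite, highly distorted) fiber; indeed, a.e.\ differentiability says nothing about a set of measure zero such as a single coset, and the error accumulated in passing from the derivative to a coarse displacement bound is exactly what must be controlled. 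You also implicitly need the same rigidity for $F^{-1}$ and a matching claim that $F$ preserves fiber volumes up to bounded error for the disintegration argument to give $\mathrm{Jac}\,\bar F=\bar\rho$; none of these is routine. In short, your plan converts the theorem into a different (and, as far as I know, not-in-the-literature) coarse-rigidity statement about bi-Lipschitz self-maps of Carnot groups, whereas the paper avoids the question entirely by building the obstruction $\zeta$ inside the Carnot group itself, using only the one-dimensional horizontal lines and the quantitative non-horizontality function $\NH$. If the coarse-rigidity lemma could be proved cleanly, your reduction would indeed be more conceptual and would likely yield the abelian Burago--Kleiner/McMullen examples as special cases; but as written, the argument is not complete and the missing lemma is at least as hard as the paper's Theorem \ref{t:non-jac}.
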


The general outline of the proof of Theorem \ref{thm:BLequiv} follows \cite{BK} but there are some key difficulties that need to be addressed (see the introduction of \S \ref{sec:BL} for specifics). The main ingredient in the proof is producing a map that cannot be the Jacobian of a biLipschitz map from the asymptotic cone of $G$ to itself.

\begin{remark} Theorem \ref{thm:BLequiv} is also true for $Y_1$ an arbitrary separated net. On the other hand one could show that there are uncountably many biLipschitz equivalence classes of nets in $G$ (see \cite{magazinov} for the case of $G= \R^n$). Additionally, in this more general setting, $G$ may be taken to be any Lie group with polynomial growth (or even any locally compact groups of polynomial growth equipped with a left invariant path metric) since \cite{breuillard} shows that these groups have the same asymptotic cones as nilpotent Lie groups. Recent work \cite{DN16} proves results similar to Theorem \ref{thm:BLequiv} for separated nets in certain solvable Lie groups of exponential growth such as the three dimensional Lie group $SOL$. 
\end{remark}
\begin{question}\label{quest:main}\cite[Question 2]{BK2002} Are all lattices in a given nilpotent Lie group biLipschitz equivalent? Are all $\Lambda$-nets biLipschitz equivalent?
\end{question}
\noindent{}{\bf BD-equivalence.}
To analyze BD equivalence between arbitrary separated nets in a nilpotent Lie group $G$, in \S \ref{sec:BD} we generalize a theorem of Laczkovich by using a criterion of Whyte:
\begin{thm}[Laczkovich-Whyte Criterion]
Two nets $Y_1, Y_2 \subset G$ are BD iff there exists $C>0$ such that for any bounded measurable set $A\subset G$,
$$\norm{\#Y_1 \cap A - \#Y_2 \cap A} \leq C p(A)$$
where the coarse perimeter $p(A)$ is defined as the volume $\norm{N_1(A)}$ of the 1-neighborhood of the topological boundary of $A$. 
\end{thm}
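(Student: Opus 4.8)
The plan is to follow the two arguments originally due to Laczkovich (for the lattice in $\R^n$) and to Whyte, and to check that they carry over to $G$ with only cosmetic changes. The geometric inputs we need are: (i) $G$ has bounded geometry, so any separated net meets a ball of radius $r$ in at most $N(r)$ points, with $N$ depending only on $G$; and (ii) the metric is left invariant, so every ball of radius $r$ has the same volume $v(r)$, whence for every $E\subseteq G$ and $r\ge s>0$ we get $\norm{N_r(E)}\le c_{r,s}\,\norm{N_s(E)}$ with $c_{r,s}$ independent of $E$ — a maximal $s$-separated subset of $N_s(E)$ has at most $\norm{N_s(E)}/v(s/2)$ points, and enlarging its balls to radius $r$ covers $N_r(E)$. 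In particular the doubling estimate $\norm{N_{2r}(E)}\le c_0\norm{N_r(E)}$ holds, and the number of net points lying within a fixed distance $\rho$ of $\partial A$ is at most a constant (depending on $\rho$, the net constants, and $G$) times $p(A)$.

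\emph{Direction $(\Rightarrow)$.} Suppose $f\colon Y_1\to Y_2$ is a $C_0$-bounded-displacement bijection and $A\subseteq G$ is bounded and measurable. Cancelling the $y\in Y_1\cap A$ with $f(y)\in A$ from both counts leaves
\[
\#(Y_1\cap A)-\#(Y_2\cap A)\;=\;\#\{y\in Y_1\cap A:\ f(y)\notin A\}\;-\;\#\{y\in Y_1\setminus A:\ f(y)\in A\}.
\]
In the first set a path of length $\le C_0$ from $y$ to $f(y)$ crosses $\partial A$, so $y\in N_{C_0}(\partial A)$, and in the second set $f(y)\in N_{C_0}(\partial A)$; by the estimate in the previous paragraph each set has at most a constant times $p(A)$ elements, which is the asserted discrepancy bound.

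\emph{Direction $(\Leftarrow)$.} This is the substantive half. For $R$ to be chosen large, form the bipartite graph $\Gamma_R$ on $Y_1\sqcup Y_2$ joining $y_1\sim y_2$ when $d(y_1,y_2)\le R$; by bounded geometry its degrees are uniformly bounded, and any perfect matching of $\Gamma_R$ is an $R$-bounded-displacement bijection $Y_1\to Y_2$. By the marriage theorem for locally finite bipartite graphs (finite Hall plus a König's-lemma compactness argument) it is enough to produce, for each finite $F\subseteq Y_1$, an injection of $F$ into $Y_2$ along edges of $\Gamma_R$, then symmetrically for $Y_2$, and then to invoke Cantor–Schröder–Bernstein, whose usual proof sends each point through either $f$ or $g^{-1}$ and hence moves it at most $R$. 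Thus we must verify Hall's condition $\#\bigl(Y_2\cap N_R(F)\bigr)\ge\#F$ for every finite $F\subseteq Y_1$. Applying the hypothesis with $A=N_R(F)$ gives $\#(Y_2\cap N_R(F))\ge\#(Y_1\cap N_R(F))-C\,p(N_R(F))$, so it remains to show, for $R$ large in terms of $C$, the net constants, and $G$,
\[
\#\bigl(Y_1\cap N_R(F)\bigr)\;\ge\;\#F+C\,p\bigl(N_R(F)\bigr)\qquad\text{for all finite }F\subseteq Y_1 .
\]
Since $Y_1\cap N_\rho(F)=F$ whenever $\rho$ is below the separation constant, the left side exceeds $\#F$ by the number of net points in the collar $N_R(F)\setminus N_\rho(F)$, which by a covering argument is $\gtrsim\norm{N_{R-O(1)}(F)}$, and this is $\gtrsim\norm{N_R(F)}$ by doubling; on the other hand $p(N_R(F))\le\norm{N_{R+1}(F)}-\norm{N_{R-1}(F)}$.

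\emph{The main obstacle} is exactly this last inequality: that once $R$ is large the collar count beats $C\,p(N_R(F))$ \emph{uniformly over all finite $F$}. The subtlety is that $F$ can be spread across many scales at once; doubling alone only gives $p(N_R(F))\le(c_0-1)\norm{N_R(F)}$, and extracting the strict gain needs the polynomial-growth structure of $G$ — which forces the annular shells $\norm{N_{R+1}(F)}-\norm{N_{R-1}(F)}$ to be small compared with $\norm{N_R(F)}$ over the relevant range of radii — and in Laczkovich's $\R^n$ argument this is handled by a dyadic multiscale construction of the matching rather than a single application of Hall. The robust alternative, which I would fall back on if the direct estimate is awkward to run in $G$, is Whyte's reformulation in uniformly finite homology: the discrepancy bound says precisely that the uniformly locally finite $0$-chain $\sum_{Y_1}\delta_y-\sum_{Y_2}\delta_y$ pairs boundedly with coboundaries, so by Hahn–Banach it is $\partial$ of a bounded $1$-chain, and rounding that chain to integer coefficients and decomposing it into bounded paths (again a Hall-type step, but now with capacities supplied by the chain) produces the bounded-displacement bijection. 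Everything outside this quantitative step is the same bookkeeping as in \cite{Whyte,HKW}.
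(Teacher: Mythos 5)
Your forward direction is essentially the same as the paper's (Claim 3.5, ``Density is preserved''): points that move across $\partial A$ under a $C_f$-BD bijection must lie in a bounded neighborhood of $\partial A$, and the net count there is controlled by $p(A)$.

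The backward direction is where you diverge from the paper, and where your proposal has a gap. The paper does \emph{not} prove Hall's condition directly. Its Claim 3.6 (``Measurable implies discrete'') only performs a short reduction: given a finite $F \subseteq Y_1 \cup Y_2$, fatten to the measurable set $A = \bigcup_{y\in F} B_R(y)$, apply the hypothesis, and convert the volume bound on $N_r(\partial A)$ into a cardinality bound on the discrete $r$-boundary $\partial_r F$. This verifies Whyte's discrete condition, and Whyte's theorem (Theorem 3.1 in the paper, cited from \cite{Whyte}) is then invoked as a black box to produce the BD bijection. Your attempt to run Hall's marriage directly contains the incorrect intermediate claim that the collar count $\#\bigl(Y_1 \cap (N_R(F)\setminus N_\rho(F))\bigr)$ is $\gtrsim \norm{N_{R-O(1)}(F)}$: if $F$ consists of all $Y_1$-points in a ball of radius $R' \gg R$, the collar is a thin shell whose volume is a vanishing fraction of $\norm{N_R(F)}$. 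The inequality that is actually needed --- collar count $\ge C\,p(N_R(F))$ uniformly over all finite $F$ and a single fixed $R$ --- is exactly the kind of uniform isoperimetric statement in an amenable group that a single scale $R$ cannot supply, which is why both Laczkovich's original proof goes multiscale and Whyte's goes homological. You correctly identify Whyte's uniformly finite homology as the robust fallback, and that fallback is precisely the paper's proof; but you do not carry out the reduction from the measurable hypothesis to Whyte's discrete boundary condition, which is the whole content of the paper's backward direction. Writing out Claim 3.6's fattening argument in place of your Hall analysis would close the gap.
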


 Next we focus on a class of naturally occurring separated nets, namely 
quasi-crystals arising from the cut-and-project construction (cf.\ \cite{hartnicketal}). In $\R^n$ these include well-studied nets such as lattices and the Penrose tiling, and are of substantial interest in solid state physics and crystallography \cite{BG, deBruijn,DO2,scoop}. We now extend the definition to the nilpotent setting:

\begin{defi}[Nilpotent cut-and-project quasi-crystals.]
Let $\phi: G \hookrightarrow G\times \R^m$ be a Lie group embedding giving an action of $G$ on  $G\times \R^m$ defined by $g\cdot (g', x)=\phi(g)*(g', x)$. Fix a parallelotope $S_0 \subset \R^m$ and set  $S=\id_G \times S_0$.  Then the projection of the integer points in $G\cdot S$ to $G$,
$$Y=Y(G, m, \phi, S_0):= \{ g\in G \st g\cdot S \cap G(\Z)\times \Z^m \neq \emptyset\}$$
is called a \emph{cut and project quasi-crystal}.
\end{defi}

\begin{remark}
If $G(\Z)$ is a lattice, then $\phi$ induces an action of $G$ on the nilmanifold $N=(G\times \R^m)/(G(\Z)\times \Z^m)$. One then has that the projection of $S$ to $N$ is a section for this action, and $Y$ is the set of return times to $S$. This interpretation is not available in the general case.
\end{remark}

The following result is a generalization of the main result of \cite{HKW} and says that most, but not all, quasi-crystals are BD equivalent to $\Lambda$-nets.
\begin{thm}
\label{thm:QC}
If $S_0$ is a box with sides parallel to the axes of $\R^m$ and $n\geq 2$, then for almost every embedding $\phi$, the set $Y(G, m, \phi, S_0)$ is a separated net BD to a $\Lambda$-net.
On the other hand, for almost every parallelotope $S_0$, every $m>0$, there is a residual set of embeddings $\phi$ such that  $Y(G,m,\phi, g_0, S_0)$ is a separated net \emph{not} BD to a $\Lambda$-net.
\end{thm}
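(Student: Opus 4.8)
The plan is to prove both halves through the Laczkovich--Whyte criterion, in each case comparing $Y$ with the box-tiling net $G(\Lambda)$ of the right covolume furnished by Theorem~\ref{thm:LambdaIsNice}; this turns the problem into a quantitative (respectively anti-quantitative) equidistribution statement for the abelian ``internal'' part of $\phi$. First one unwinds the construction. Since $\R^m$ is abelian, the $\R^m$-component $\psi$ of $\phi$ annihilates $[G,G]$, so in exponential coordinates it is a linear function $\psi(a_1,\dots,a_n)=L(a_1,\dots,a_d)$ of the first-layer variables alone ($d=\dim G^{\mathrm{ab}}$). Using $G(\Z)=\Z^n$ as a set, one checks that $Y$ is, up to the fixed translation by $g_0$, the set $\{a\in G(\Z)\st\bar\psi(a)\in\bar S_0\}$, where $\bar\psi\colon G(\Z)\to\T^m$ is induced by $\psi$ and $\bar S_0\subset\T^m$ is the reduction of $-S_0$. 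Uniform discreteness is automatic; coarse density holds under a mild nondegeneracy on $L$ that is of full measure and residual in $\phi$ (the free higher-layer integer coordinates of $G(\Z)$ let one correct successive layers). So in both parts $Y$ is a separated net, of density $\delta:=\norm{S_0}$ by equidistribution of $\bar\psi$. Since $G(\Lambda)$ with $\norm{\Lambda}=1/\delta$ tiles $G$ by $\Lambda$-boxes and hence satisfies $\bigl|\#(G(\Lambda)\cap A)-\delta\norm{A}\bigr|\le Cp(A)$ for all bounded measurable $A$ (the straddling tiles number $O(p(A))$), the criterion reduces the theorem to controlling the discrepancy
\begin{equation*}
D(A):=\#(Y\cap A)-\delta\norm{A}=\sum_{a\in G(\Z)\cap A}f(\bar\psi(a))+O(p(A)),\qquad f:=\mathbf 1_{\bar S_0}-\norm{S_0}.
\end{equation*}

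For the positive half, $S_0$ axis-parallel forces the product Fourier decay $|\widehat f(k)|\lesssim\prod_i(1+|k_i|)^{-1}$. Writing $S_k(A):=\sum_{a\in G(\Z)\cap A}e^{2\pi i\langle k,\psi(a)\rangle}$, one has $\sum_{a\in G(\Z)\cap A}f(\bar\psi(a))=\sum_{k\ne0}\widehat f(k)S_k(A)$, and a Whitney-type decomposition reduces the target $|D(A)|\le Cp(A)$ to axis-parallel boxes $A$ whose individual perimeters sum to $O(p(A))$. On such a box $S_k(A)$ is an explicit product of one-dimensional geometric sums (with ratios $e^{2\pi i(L^{\!\top}k)_j}$, $j\le d$) times a clean volume factor from the remaining coordinates, hence is bounded by that factor times $\prod_{j\le d}\min\!\bigl(\text{side}_j,\lVert(L^{\!\top}k)_j\rVert^{-1}\bigr)$; a Borel--Cantelli argument shows that for almost every $\phi$ the numbers $\lVert(L^{\!\top}k)_j\rVert$ obey a simultaneous Diophantine lower bound. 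Laczkovich's summation over $k$ --- which crucially exploits the product form of $\widehat f$ --- then gives $|D(A)|\le Cp(A)$ up to a factor at worst logarithmic in the side lengths, absorbed because $n\ge2$ forces the perimeter of large boxes to grow. Hence $Y$ is BD to $G(\Lambda)$ with $\norm{\Lambda}=1/\delta$.

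For the negative half, for almost every parallelotope $S_0$ the Fourier coefficients fail the product bound: along the direction normal to a skew facet there is a sequence $k_j\to\infty$ with $|\widehat f(k_j)|\gtrsim|k_j|^{-1}$. We run a Baire-category argument on the finite-dimensional manifold of admissible embeddings. For $C\in\N$, let $\mathcal U_C$ be the set of $\phi$ admitting a ball $B_R$ with $|D(B_R)|>Cp(B_R)$; since $D(B_R)$ is locally constant in $\phi$ away from a negligible incidence set, this strict inequality is stable and $\mathcal U_C$ is open. Given any $\phi$, perturb $\psi$ by an arbitrarily small amount so that for some large $j$ the vector $L^{\!\top}k_j$ lies within $cR^{-1}$ of $\Z^d$ in every coordinate, with $R\asymp|k_j|^2$; this is possible since a perturbation of size $O(|k_j|^{-1})$ moves $L^{\!\top}k_j$ over a ball of radius $\asymp 1$, and (passing to a subsequence where $\widehat f(k_j)$ is essentially positive) then every phase in $S_{k_j}(B_R)$ is near $1$, so $|S_{k_j}(B_R)|\asymp\#(G(\Z)\cap B_R)\asymp R^{\,Q}$ ($Q$ the growth exponent of $G$, so $p(B_R)\asymp R^{\,Q-1}$), while a generic such perturbation leaves the rest of the Fourier series $o(R^{\,Q})$; since $|\widehat f(k_j)|R^{\,Q}\asymp|k_j|^{-1}R^{\,Q}\gg R^{\,Q-1}$, we get $|D(B_R)|>Cp(B_R)$, so $\mathcal U_C$ is dense. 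Then $\bigcap_C\mathcal U_C$ is residual, and for $\phi$ in it $\sup_A|D(A)|/p(A)=\infty$, so $Y$ is not BD to any $\Lambda$-net.

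The step I expect to be the main obstacle is making the Laczkovich-type reduction and the exponential-sum bounds work genuinely in the non-Euclidean geometry of $G$: one must check that a bounded measurable $A$ is trapped between inner and outer box unions whose symmetric difference and total boundary are $O(p(A))$ when $p$ is measured by the left-invariant Riemannian (at large scale, Carnot--Carath\'eodory) $1$-neighborhood, and that $S_k(A)$ really factors box-by-box even though $G(\Z)$ is not a subgroup and the group law carries nontrivial polynomial corrections between layers. For the negative half the delicate point is engineering the deep resonance by a perturbation that stays a Lie-group embedding, keeps $Y$ a separated net, and introduces no competing resonance that would stop the single mode $k_j$ from dominating $D(B_R)$.
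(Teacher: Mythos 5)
Your proposal takes a genuinely different route from the paper, and it contains a gap that the paper is specifically engineered to avoid. You correctly observe that the internal component $\psi$ annihilates $[G,G]$ and hence depends only on the abelianization coordinates, so that $Y$ (generically, when the $G$-component $\alpha$ of $\phi$ is invertible) is a set of the form $\{g\in G(\Z)\st \pi(g)\in Y'\}$ where $Y'$ is a genuine cut-and-project set in $G'\cong\R^d$. But you then attempt to run the Laczkovich--Whyte reduction, the Whitney decomposition, and the exponential-sum bounds directly in $G$ with respect to its intrinsic coarse perimeter $p(\cdot)$. That is exactly the obstacle you flag at the end and do not resolve, and it is a real one: the coarse perimeter of a Euclidean box measured in $G$'s left-invariant Riemannian geometry is not comparable to its Euclidean perimeter (think of long thin boxes in a non-abelian $G$), so the reduction to axis-parallel boxes and the factorization $S_k(A)$ as a product of one-dimensional geometric sums do not plug straightforwardly into the criterion as stated for $G$.

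The paper instead does not redo any Fourier analysis at all. Having identified $Y$ as a product net $\{g\in G(\Z)\st g'\in \alpha'^{-1}(Y')\}$ over a Euclidean quasi-crystal $Y'\subset G'$, it cites Theorem~1.2 of \cite{HKW} as a black box for the positive and negative statements about $Y'$, and then applies the Product Net Lemma (Lemma~\ref{lemma:productnet}) inductively to transfer ``BD to a lattice in $G'$'' into ``BD to a $\Lambda$-net in $G$'' and vice versa. That lemma is precisely the device that bridges the gap you identify: it replaces the nilpotent-geometry version of the Laczkovich estimate by the Euclidean one. (It is also the reason the paper cannot get the BL analogue -- there is no BL version of Lemma~\ref{lemma:productnet}.) Your plan amounts to re-proving HKW from scratch in $G$; if you want to pursue it you would need to either (i) first project to $G'$ and run your analysis there, which collapses to HKW, or (ii) actually supply the missing comparability of combinatorial box perimeter to $p(\cdot)$ in the nilpotent metric, plus the column-by-column factorization argument, which is substantial and essentially reproduces the content of the product net lemma.
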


\begin{remark}
We are not able to prove the corresponding result for BL equivalence since we are unable to prove a BL version of Lemma \ref{lemma:productnet} from \S \ref{sec:BD}. Such a result would immediately yield a generalization of the above theorem for the BL equivalence relation, as well as the BL equivalence of any two lattices in a given nilpotent Lie group thus answering Question \ref{quest:main}.
\end{remark}

The proof of Theorem \ref{thm:QC} relies on the corresponding result of Haynes-Kelly-Weiss in the Euclidean setting, which in turn appeals to a stronger form of the Laczkovich-Whyte criterion in $\R^n$. Defining \emph{dyadic tiles} (see \S \ref{sec:dyadic}) on a nilpotent Lie group and extending an efficient-counting result of Laczkovich (Theorem \ref{thm:efficientnilpotent}), we prove the corresponding criterion in $G$:

\begin{thm}
\label{thm:strongBD}
Let $Y_1, Y_2$ be nets in a rational nilpotent Lie group $G$ of topological dimension $n$.
Suppose that for every nilpotent dyadic tile $T$ of level $k$, there exists $\epsilon>0$ and $C>0$ such that
$$\norm{ \# T \cap Y_1 - \#T \cap Y_2 } \leq C 2^{k(n-1-\epsilon)}.$$
Then there exists a BD bijection $f: Y_1 \rightarrow Y_2$.
\end{thm}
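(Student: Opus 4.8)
The plan is to reduce the problem to the Euclidean efficient-counting machinery of Laczkovich by working tile-by-tile in a dyadic decomposition of $G$. The key structural fact, which I would establish first, is that a nilpotent dyadic tile $T$ of level $k$ is, up to left-translation and the action of automorphic dilations, a bounded-complexity region: its coarse perimeter satisfies $p(T) \lesssim 2^{k(n-1)}$ while its volume is comparable to $2^{kn}$, and $T$ can be subdivided into $2^n$ dyadic subtiles of level $k+1$. This is the nilpotent analogue of the usual dyadic cube structure, and the rationality hypothesis on $G$ is what lets us align these tiles coherently with the lattice $G(\Z) \times \Z^m$ (equivalently, with a cocompact lattice $\Gamma \subset G$) so that translates of $T$ by $\Gamma$ tile $G$.

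Next I would set up the combinatorial transport argument in the style of Laczkovich–Whyte. Fix a large level $k_0$ and consider the tiles of level $k_0$; within each such tile the hypothesis gives $\bigl|\#T\cap Y_1 - \#T\cap Y_2\bigr| \le C\,2^{k_0(n-1-\epsilon)}$. One builds the bijection $f\colon Y_1 \to Y_2$ hierarchically: at each scale one matches up points of $Y_1$ and $Y_2$ lying in the same dyadic tile, pushing the ``discrepancy'' (the signed surplus of $Y_1$-points over $Y_2$-points) down to finer and finer tiles, and finally matching the residual points within bounded-diameter tiles by an arbitrary bijection. The displacement incurred at scale $j$ is $O(2^{j})$ (the diameter of a level-$j$ tile in the word/Riemannian metric, using that the metric on $G$ is comparable to a homogeneous quasi-norm on the dyadic scales), and the number of points that must be transported across scale $j$ is controlled by the accumulated discrepancy, which by the hypothesis is at most $C\sum_{j} 2^{j(n-1-\epsilon)}$ relative to the $2^{jn}$ points available — so the fraction of points moved at scale $j$ is $O(2^{-j(1+\epsilon)})$. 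Summing the displacement contributions, $\sum_j 2^{j}\cdot(\text{fraction moved at scale } j)$ converges because $\epsilon > 0$, yielding a uniform bound on $d(f(y),y)$.

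To make the transport at each scale actually work — i.e. to guarantee that one can match points within each tile with only the claimed number left over, and do so injectively and surjectively across the whole group — I would invoke the efficient-counting result, Theorem \ref{thm:efficientnilpotent}, which is the nilpotent version of Laczkovich's lemma and provides exactly the combinatorial matching on the dyadic tiling with the required quantitative control on the number of unmatched points in terms of the local discrepancies. The global assembly of these local matchings into a single bijection is then a Hall's-theorem / flow argument on the infinite tree of dyadic tiles, where the $\epsilon > 0$ slack ensures the relevant capacities are not saturated.

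The main obstacle, I expect, is the geometry in the first paragraph: setting up nilpotent dyadic tiles that simultaneously (i) tile $G$ under the lattice $\Gamma$, (ii) nest properly across levels with each tile splitting into $2^n$ children, and (iii) have coarse perimeter $\lesssim 2^{k(n-1)}$ with respect to the Riemannian metric despite the non-commutative, anisotropic scaling of $G$ (different exponential coordinates scale with different weights under the dilations). Controlling the perimeter requires understanding how the defining polynomials $p_i$ in the group law interact with the dyadic subdivision, and this is where the bulk of the real work lies; once the tiles are in place, the transport argument and the appeal to Theorem \ref{thm:efficientnilpotent} are comparatively routine adaptations of the Euclidean case.
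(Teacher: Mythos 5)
Your proposal uses the right ingredients (nilpotent dyadic tiles, the efficient-counting theorem, a Hall-type argument) but the mechanism you propose for turning the per-tile discrepancy bound into a bounded-displacement bijection does not work, and it skips the reduction the paper actually relies on. The specific flaw is in your displacement analysis: you argue that ``Summing the displacement contributions, $\sum_j 2^{j}\cdot(\text{fraction moved at scale } j)$ converges because $\epsilon > 0$, yielding a uniform bound on $d(f(y),y)$.'' But that sum controls an \emph{average} displacement, not $\sup_y d(f(y),y)$: a point that remains unmatched until scale $j$ and is transported then is moved a distance on the order of $2^j$, and nothing in the convergence of the series prevents the supremum over $y$ from being infinite. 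The passage from local discrepancy estimates to a genuine BD bijection is exactly what Whyte's theorem (Theorem~\ref{Whyte:thm}) and its Laczkovich-type reformulations (Theorem~\ref{thm:LWMeas}, Lemma~\ref{lemma:tilesareenough}, Theorem~\ref{thm:LWTiles}) accomplish, and your proof never actually invokes them; the brief final mention of Hall's theorem is the right instinct but is disconnected from the discrepancy--perimeter estimate that makes Hall's condition checkable.

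The paper's proof is much shorter precisely because it does not attempt a direct construction of $f$. Given an arbitrary finite union of unit tiles $A$, Theorem~\ref{thm:efficientnilpotent} describes $A$ by dyadic tiles with at most $C\,p(A)/2^{k(n-1)}$ tiles at each level $k$; summing the hypothesized per-tile discrepancy $C 2^{k(n-1-\epsilon)}$ over levels gives
$$\norm{\#(Y_1\cap A) - \#(Y_2\cap A)}\ \leq\ C\, p(A)\sum_{k} 2^{-k\epsilon}\ \leq\ C'\, p(A),$$
and Lemma~\ref{lemma:tilesareenough} together with Theorem~\ref{thm:LWMeas} (equivalently, Theorem~\ref{thm:LWTiles}) then \emph{produces} the BD bijection. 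A secondary confusion in your write-up is the role assigned to Theorem~\ref{thm:efficientnilpotent}: it is a \emph{decomposition} theorem (it expresses an arbitrary union of tiles using controllably many dyadic tiles), not a matching or transport lemma, so it cannot by itself ``guarantee that one can match points within each tile''---that guarantee is the content of Whyte's theorem once the discrepancy--perimeter criterion has been verified.
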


\begin{remark}
While Theorem \ref{thm:strongBD} is stronger in the sense that it allows one to count integer points only on dyadic tiles rather than arbitrary bounded measurable subsets of $G$. On the other hand, it is only a sufficient condition, not a characterization of BD equivalence.
\end{remark}

\begin{remark} When $G$ is a rational \emph{Carnot} group (see \S \ref{sec:BL} for the definition) 
a corresponding theorem holds for \emph{Carnot dyadic tiles} (see Remark \ref{remark:CarnotDyadic}) with $(n-1)$ replaced by the homogeneous dimension of $G$ minus the length of its lower central series (i.e. step).
\end{remark}

We finish by considering the convenient fact that every $\Lambda$-net is a separated net both with respect to the left-invariant Riemannian metric $d_G$ on $G$ and with respect to the Euclidean metric $d_\E$ induced by exponential coordinates. 
\begin{defi}
\label{defi:exotic}
Let $d_1$ and $d_2$ be two metrics on a space $X$. We say that a set $Y\subset X$ is a \emph{($d_1$, $d_2$)-exotic net} if $Y$ is a separated net with respect to $d_1$ but is neither coarsely dense nor uniformly discrete with respect to $d_2$. When $X=\R^n$, $d_1=d_G$ and $d_2=d_\E$, we say that $Y$ is an \emph{exotic net}. 
\end{defi}

By reducing to the case of step-2 groups with one-dimensional center (i.e.\ Heisenberg groups, see Remark \ref{remark:Heisenberg}) and counting certain integer points in dyadic tiles, we prove:

\begin{thm}
\label{thm:exotic}
Let $G$ be a non-abelian nilpotent Lie group identified with $\R^n$ via exponential coordinates. Then every $\Lambda$-net $G(\Lambda)$ is BD equivalent to an exotic net.
\end{thm}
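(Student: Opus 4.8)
The plan is to produce a bounded–displacement bijection $f : G(\Lambda) \to Y$ that is the identity outside a sparse, widely separated family of small regions, and to arrange that inside those regions $f$ exploits the fact that in a non‑abelian $G$ the metrics $d_G$ and $d_\E$ decouple far from the origin: right translation by a \emph{fixed} group element has constant $d_G$–displacement (left–invariance), but once the base point is deep enough it can induce an arbitrarily large $d_\E$–displacement concentrated in a central ("commutator") coordinate. Two families of such local perturbations will be used: one that forces $Y$ to have $d_\E$–clusters (so $Y$ is not $d_\E$–uniformly discrete) and one that forces $Y$ to have arbitrarily large $d_\E$–voids (so $Y$ is not $d_\E$–coarsely dense), all while $Y$ remains a separated net for $d_G$. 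Since $f$ has uniformly bounded displacement, $Y := f(G(\Lambda))$ is automatically BD equivalent to $G(\Lambda)$.

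First I would reduce to the Heisenberg case. Because $G$ is non‑abelian, the last non‑trivial term $\mathfrak g_s$ of its lower central series is a non‑zero central ideal with $s\ge 2$; choosing $X,Y\in\mathfrak g$ with $Z:=[X,Y]\in\mathfrak g_s\setminus\{0\}$ one checks $X,Y,Z$ are independent and $\langle X,Y,Z\rangle$ is a step‑$2$ subalgebra with one‑dimensional center, i.e.\ a copy of the Heisenberg algebra. In exponential coordinates adapted to the lower central series, $Z$ is a central coordinate of weight $s\ge 2$, so $d_G(e,\exp(tZ))\asymp|t|^{1/s}$, and $G(\Lambda)$ restricted to translates of the Heisenberg subgroup $\mathbb H=\exp\langle X,Y,Z\rangle$ has the structure needed below. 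As with the BD analogue of the product‑net Lemma \ref{lemma:productnet}, I expect it to suffice to perform all perturbations inside (translates of) $\mathbb H$; thus one reduces to $G=\mathbb H$ itself, and it is enough to treat $\mathbb H=\mathbb R^{3}$ with $(x,y,z)(x',y',z')=(x+x',\,y+y',\,z+z'+\tfrac12(xy'-x'y))$ and $d_G(e,(x,y,z))\asymp|x|+|y|+|z|^{1/2}$.

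For the construction in $\mathbb R^{3}$, fix a very sparse $N_k\to\infty$ with consecutive sites $d_G$–far apart. \emph{(i) Killing $d_\E$–uniform discreteness:} at $p_k=(N_k,0,0)$ and its $z$–neighbor $p_k'=p_k(0,0,\lambda_3)=(N_k,0,\lambda_3)$, keep $p_k$ fixed and set $f(p_k')=p_k'(0,-2\lambda_3/N_k,0)=(N_k,-2\lambda_3/N_k,0)$; then the displacement of $p_k'$ is $d_G(e,(0,-2\lambda_3/N_k,0))=2\lambda_3/N_k\to0$, while $d_\E(f(p_k),f(p_k'))=2\lambda_3/N_k\to0$ and $d_G(f(p_k),f(p_k'))\to d_G(e,(0,0,\lambda_3))>0$, so genuine $d_\E$–clusters appear without $d_G$–clusters. \emph{(ii) Killing $d_\E$–coarse density:} at a center $q_m=(N_m,0,0)$ with $N_m\gg\rho_m\to\infty$, move every point of $G(\Lambda)\cap B_{\mathbb E}(q_m,\rho_m)$ by the fixed element $v=(0,\lambda_2/2,0)$; since such a point has $x$–coordinate $\asymp N_m$, its $z$–coordinate jumps by $\asymp \tfrac{\lambda_2}{4}N_m\gg\rho_m$, so it leaves $B_{\mathbb E}(q_m,\rho_m)$, while its displacement is the constant $d_G(e,v)=\lambda_2/2$ — hence $B_{\mathbb E}(q_m,\rho_m)\cap Y=\varnothing$ with $\rho_m\to\infty$. (Here the step‑$2$ hypothesis is exactly what makes the $z$–coordinate of $pv$ an \emph{affine} function of $p$, so that it cleanly exits the ball; this is where higher‑step groups genuinely behave differently.) Declaring $f$ the identity elsewhere and spacing all sites mutually $d_G$–far, $f$ is a bijection with displacement $\le\max(1,\lambda_2/2)$, so $Y$ is BD to $G(\Lambda)$ (and by Theorem \ref{thm:LambdaIsNice} the class is robust). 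One then checks $Y$ is a separated net for $d_G$: uniform discreteness survives because the only new short $d_G$–distances, from (i), are bounded below and the density at most doubles on the sheared images in (ii); coarse density survives because each $d_\E$–void $B_{\mathbb E}(q_m,\rho_m)$ is $d_G$–"thin" — it contains no $d_G$–ball of radius $\gg\rho_m/N_m$ — so any fixed‑radius $d_G$–ball centered in it protrudes in the $z$–direction into the untouched lattice. By (i) and (ii), $Y$ is $(d_G,d_\E)$–exotic. (Alternatively, instead of exhibiting $f$ one may define $Y$ directly and verify the dyadic‑tile counting bound of Theorem \ref{thm:strongBD}, whose hypothesis holds because the perturbations are sparse and of low complexity.)

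The hard part will be the reduction: making rigorous that perturbing only inside $\mathbb H$ and its translates still yields a genuine $d_G$–net of all of $G$ together with a bounded‑displacement bijection — this is where a product‑net‑type lemma, or a direct dyadic count on $G$, is needed, and where the non‑rationality of a general $G$ must be handled carefully. Secondary to this is the bookkeeping verifying that $Y$ remains a separated net for $d_G$ after the coarse‑density perturbations, which rests entirely on the quantitative comparison of $d_G$ and $d_\E$ showing the large $d_\E$–voids are $d_G$–thin. I expect the legitimacy of the reduction to Heisenberg groups to be the real crux, with the Heisenberg construction itself amounting to essentially the explicit computation above.
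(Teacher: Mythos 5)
Your construction is genuinely different from the paper's and, in the Heisenberg case, is arguably more direct. The paper does not build a bijection by hand: it fixes $\theta$ irrational, removes from $G(\Z)$ a union of Euclidean balls $E_i$ placed at Euclidean positions $(x_E(i),\theta x_E(i),0,\dots,0)$ with rapidly increasing $x_E(i)$, and then verifies BD equivalence to $G(\Z)$ by showing (via the shear Lemma~\ref{lemma:shear}) that each dyadic tile meets at most $2^{\ell(n-2)}$ of the deleted points, so that the Strong BD criterion (Theorem~\ref{thm:strongBD}), obtained from Laczkovich--Whyte plus the efficient-counting Theorem~\ref{thm:efficientnilpotent}, applies; non-uniform-discreteness is then added by inserting points $(i,1/i,0,\dots,0)$, which are group-theoretically small right translates of $(i,0,\dots,0)$. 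You instead exhibit the BD bijection directly: rather than deleting points in a Euclidean ball, you push them out of it by a fixed (small) right translation $v$, exploiting that $pv$ picks up an $x$-proportional $z$-shift. This is a clean trade: your argument sidesteps the dyadic-tile machinery and the shear-counting lemma entirely, at the cost of needing to verify the bookkeeping (injectivity of $f$, $d_G$-uniform discreteness of $Y$) that the paper's counting criterion packages for free. The quantitative content is the same — far-away Euclidean balls are $d_G$-thin, and right translations have $d_G$-displacement independent of base point but $d_\E$-displacement growing with the base point — and your ``cluster'' step is essentially identical in spirit to the paper's added sequence of right-translated lattice points.

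Two things need tightening before this is a complete proof, and one is a real gap. The minor one: the fixed translation $v=(0,\lambda_2/2,0)$ is too large in general. Its $d_G$-norm is $\lambda_2/2$, which can exceed the $d_G$-uniform-discreteness constant of $G(\Lambda)$, so a moved point $pv$ could collide (or nearly collide) with an unmoved lattice point, breaking $d_G$-uniform discreteness. One should take $v=(0,\epsilon,0)$ with $\epsilon$ smaller than half the separation constant and with $\epsilon\notin\lambda_2\Z$; then $N_m\gg\rho_m/\epsilon$ still makes the Euclidean $z$-shift exceed $\rho_m$. The real gap is the reduction from general non-abelian $G$ to the Heisenberg computation. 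You propose to find a Heisenberg \emph{subgroup} $\mathbb H=\exp\langle X,Y,Z\rangle$ and ``perform all perturbations inside translates of $\mathbb H$,'' but the paper's product-net Lemma~\ref{lemma:productnet} is built for \emph{quotients}, not subgroups: the paper repeatedly projects away central directions to reach a rational step-2 quotient with one-dimensional center (a Heisenberg group), constructs the exotic net there, and pulls back along the quotient map, where the fiber is a Euclidean factor and the product-net lemma applies directly. A Heisenberg subgroup need not split $G$ as a product, $G(\Lambda)$ need not meet $\mathbb H$ in a $\Lambda$-net of $\mathbb H$, and — more to the point for your computation — in exponential coordinates of a step-$\geq 3$ group the BCH expansion of $pe^{\epsilon Y}$ contributes additional polynomial terms in the $Z$-coordinate beyond $\tfrac12\epsilon\pi_X(p)$, so the clean ``affine-in-$p$'' description of the shear that you rely on (``this is where the step-2 hypothesis is exactly what makes the $z$-coordinate of $pv$ an affine function of $p$'') is precisely what fails in the general case. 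You flag this as the crux, correctly; but as stated the reduction is not in place, and closing it seems to require either mimicking the paper's quotient argument (in which case you land in a genuine Heisenberg group and your construction goes through verbatim) or redoing your estimates with the full BCH remainder, which is a nontrivial additional argument.
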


\begin{question}
Suppose $\R^n$ has two metrics $d_1$ and $d_2$ arising from two nilpotent Lie groups $G_1$ and $G_2$. Does there exist a ($d_1$, $d_2$)-exotic net in $\R^n$? Interesting cases include $G_1$ being isomorphic to $G_2$, or having the same asymptotic cone as $G_2$.  
\end{question}

\section{Nilpotent Lie groups}\label{sec:nilp}

We start by discussing the structure of nilpotent Lie groups, including some constructions that we believe to be new. See e.g.\ \cite{Malcev, GreenTao, breuillard} for more information.

\subsection{Exponential coordinates}
An $n$-dimensional Lie group $G$ with Lie algebra $\mathfrak g$ is called nilpotent of step $s$ if it has a finite lower central series
$$G=G_1 \supset G_2 \supset \ldots \supset G_s=1$$
where $G_i = [G, G_{i-1}]$ for $i>1$. From the corresponding nilpotency condition on $\mathfrak g$, it is easy to construct a \emph{Malcev basis} $\xi_1, \ldots, \xi_n$ for $G$, satisfying the condition that the linear span of  $\xi_i, \ldots, \xi_n$ is an ideal in $\mathfrak g$. The constants $s_{ijk}$ satisfying $[\xi_i, \xi_j] = \sum_k s_{ijk} \xi_k$ are known as the \emph{structural constants} of $\mathfrak g$.

As long as $G$ is connected and simply connected (which we always assume), the exponential map $\exp: \mathfrak g \rightarrow G$ is a diffeomorphism, and one can represent a point $g\in G$ in two types of \emph{exponential coordinates}:
\begin{align*}
g&= \exp(a_1 \xi_1 + \ldots + a_n \xi_n) = \exp(b_1 \xi_1)* \ldots *\exp(b_n \xi_n)
\end{align*}
The coordinates $(a_1, \ldots, a_n)$ of $g$ are \emph{exponential coordinates of the first kind}, while the coordinates $(b_1, \ldots, b_n)$ are \emph{exponential coordinates of the second kind}.

Using the Baker-Campbell-Hausdorff formula for $\exp$, one shows that the group law on $G$ in exponential coordinates is of the form
\begin{align*}
(a_1, \ldots, a_n) * (a_1', \ldots, a_n') = (a_1 + a_1'+p_1, \ldots, a_n+a_n'+p_n)\\
(b_1, \ldots, b_n) * (b_1', \ldots, b_n') = (b_1 + b_1'+q_1, \ldots, b_n+b_n'+q_n)
\end{align*}
where each $p_i$ is a polynomial in $(a_1, \ldots, a_{i-1}; a_1', \ldots, a_{i-1}')$ with real coefficients, and each $q_i$ is a polynomial in $(b_1, \ldots, b_{i-1}; b_1', \ldots, b_{i-1}')$ with real coefficients.

Recall that a \emph{lattice} in a Lie group is a discrete subgroup with finite covolume. Malcev famously established in \cite{Malcev} the connection between $G$ admitting a lattice and the coefficient types of the above polynomials.  
\begin{thm}[Malcev]
\label{thm:Malcev}
The following are equivalent for a nilpotent Lie group $G$:
\begin{enumerate}
\item $G$ admits a (uniform) lattice,
\item $\mathfrak g$ admits a Malcev basis with rational structural constants,
\item $\mathfrak g$ admits a Malcev basis such that the polynomials $p_i$ have rational coefficients,
\item $\mathfrak g$ admits a Malcev basis such that the polynomials $q_i$ have integer coefficients.
\end{enumerate}
\end{thm}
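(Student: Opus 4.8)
This is Malcev's theorem, so what follows is a sketch of the classical argument rather than a new proof. The content splits cleanly: the equivalences $(2)\Leftrightarrow(3)\Leftrightarrow(4)$ are ``formal'', flowing from the fact that the Baker--Campbell--Hausdorff (BCH) series of a step-$s$ Lie algebra terminates and has rational coefficients with denominators dividing a fixed integer $N=N(s)$, combined with the triangular shape of the polynomials $p_i,q_i$ recorded above; the equivalence $(1)\Leftrightarrow(2)$ carries the geometric weight. The plan is to prove $(2)\Rightarrow(1)$, then $(1)\Rightarrow(2)$, and finally to dispatch $(2)\Leftrightarrow(3)\Leftrightarrow(4)$ by bookkeeping.

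\textbf{From structural constants to lattices, $(2)\Rightarrow(1)$.} Given a Malcev basis $\xi_1,\dots,\xi_n$ with rational structural constants $s_{ijk}$, one rescales $\xi_i\mapsto t_i\xi_i$, choosing the $t_i$ inductively (possible since $[\xi_i,\xi_j]$ lies in the ideal spanned by $\xi_{\max(i,j)+1},\dots,\xi_n$), so that the $\Z$-span $L=\operatorname{span}_\Z\{\xi_i\}$ is a Lie ring, $[L,L]\subseteq L$. After one further rescaling that clears the BCH denominators one has $L\star L\subseteq L$, where $X\star Y:=\log(\exp X\,\exp Y)=X+Y+\tfrac12[X,Y]+\cdots$; since also $-L=L$, the set $\exp(L)$ is a subgroup of $G$. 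It is discrete because $L$ is a full-rank lattice in the vector space $\mathfrak g$ and $\exp$ is a diffeomorphism, and it is cocompact because $\exp$ carries a compact fundamental domain for $L$ onto a compact set whose $\exp(L)$-translates cover $G$. Hence $\exp(L)$ is a uniform lattice.

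\textbf{From lattices to structural constants, $(1)\Rightarrow(2)$.} Let $\Gamma\subset G$ be a uniform lattice. Using that $\Gamma\cap G_i$ is a lattice in $G_i$ and that $\Gamma G_i/G_i$ projects to a lattice in $G/G_i$, build by induction on the step a Malcev basis $\xi_i=\log\gamma_i$ with $\gamma_i\in\Gamma$ that is \emph{adapted} to $\Gamma$, meaning $\Gamma=\{\gamma_1^{k_1}\cdots\gamma_n^{k_n}:k_i\in\Z\}$ with unique exponents and $\operatorname{span}\{\xi_i,\dots,\xi_n\}$ an ideal for each $i$. Then $\log\Gamma$ is a full-rank lattice in $\mathfrak g$ closed under $\star$. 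Expanding the $\star$-commutator $\xi_i\star\xi_j\star(-\xi_i)\star(-\xi_j)=[\xi_i,\xi_j]+(\text{higher order})$ and inducting downward along the lower central series — the higher-order BCH terms lie in deeper ideals already known to be rationally spanned by the $\xi_k$ — one expresses every bracket $[\xi_i,\xi_j]$ as a $\Q$-linear combination of the $\xi_k$, so the structural constants are rational. This is the substantive step: it rests on the structure theory of lattices in $G$ (that intersections with, and projections modulo, the terms of the lower central series stay lattices) and on careful bookkeeping of which BCH correction terms can contribute to a given bracket, so I expect it to be the main obstacle.

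\textbf{The formal equivalences $(2)\Leftrightarrow(3)\Leftrightarrow(4)$.} BCH presents each $p_i$ as a universal polynomial with rational coefficients in the $s_{ijk}$, giving $(2)\Rightarrow(3)$; conversely the bilinear part of $p_i$ in the cross variables $(a_j,a_k')$ recovers each $s_{ijk}$ up to a nonzero rational scalar, giving $(3)\Rightarrow(2)$. The passage from exponential coordinates of the first kind to those of the second is a triangular polynomial automorphism of $\R^n$ whose coefficients are rational exactly when the $p_i$ are, so $(3)$ yields $q_i$ with rational coefficients; one then clears denominators by rescaling the $\xi_i$ one index at a time, which is legitimate because $q_i$ involves only lower-indexed variables, arriving at integer coefficients, i.e.\ $(4)$; and $(4)\Rightarrow(3)$ is immediate. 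This completes the cycle.
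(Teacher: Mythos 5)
The paper does not prove this theorem; it is stated as a cited background result (\cite{Malcev}) and used elsewhere without proof, so there is no internal argument to compare against. Your sketch is a faithful outline of the classical Malcev proof and the overall structure is correct: $(2)\Rightarrow(1)$ by rescaling to a $\Z$-Lie ring and clearing BCH denominators so that $\exp(L)$ is a subgroup; $(1)\Rightarrow(2)$ by building an adapted Malcev basis from the lattice and unwinding $\star$-commutators down the lower central series; and the formal cycle $(2)\Leftrightarrow(3)\Leftrightarrow(4)$ by BCH bookkeeping and triangular changes of variable.

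One place where the phrasing is looser than the argument actually needs to be is cocompactness in $(2)\Rightarrow(1)$: you say $\exp$ carries a compact fundamental domain $F$ for $L$ in $\mathfrak g$ onto a compact set whose $\exp(L)$-translates cover $G$. As stated this is not quite immediate, because for $X=\ell+f$ with $\ell\in L$, $f\in F$, the element $(-\ell)\star X$ picks up BCH correction terms involving $\ell$, which is unbounded, so it is not obvious that $L\star F=\mathfrak g$. The standard fix is to run the cocompactness argument by induction on the step, using that $\exp(L)\cap G_2$ is cocompact in $G_2$ and that the image of $\exp(L)$ in the abelianization $G/G_2$ is a genuine Euclidean lattice; alternatively one can invoke the Malcev coordinates of the second kind directly, where $\exp(L)$ is literally the set of integer points and the unit cube is a compact fundamental domain. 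Either repair is routine, and as a sketch of a cited classical theorem your proposal is fine.
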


For a nilpotent Lie group satisfying the conditions of Theorem \ref{thm:Malcev} (called \emph{rational}), by \emph{integral exponential coordinates} we will mean exponential coordinates of the second kind such that the polynomials $q_i$ have integer coefficients.

\subsection{Projection systems}
\label{sec:projection}

Consider a nilpotent Lie group $G$ with (integral if appropriate) exponential coordinates $(x_1, \ldots, x_n)$ and group law
$$(x_1, \ldots, x_n) * (x_1', \ldots, x_n') = (x_1 + x_1'+p_1, \ldots, x_n+x_n'+p_n).$$

Define a group $G'$ as the space $\R^{n-1}$ with coordinates $(x_1, \ldots, x_{n-1})$ and group law
$$(x_1, \ldots, x_{n-1}) * (x_1', \ldots, x_{n'-1}) = (x_1 + x_1'+p_1, \ldots, x_{n-1}+x_{n'-1}+p_{n-1}).$$
  
Clearly, $G'$ is a nilpotent Lie group, the coordinates $(x_1, \ldots, x_{n-1})$ are (integral) exponential coordinates, and the projection $\pi: G \rightarrow G'$ given by $(x_1, \ldots, x_n) \mapsto (x_1, 
\ldots, x_{n-1})$ is a Lie group homomorphism, whose kernel $X_n$ lies in the center of $G$. 
Furthermore, the projection $\pi: G \rightarrow G'$ is a 1-Lipschitz map with respect to the induced Riemannian metric on $G'$. 

Repeating the projection yields a chain of nilpotent Lie groups of decreasing dimension and step, which we refer to as a \emph{projection system}. For any choice of $G$ of nilpotent step at least 2 we can ensure that the projection system eventually reaches a step-2 group, a step-2 group with one-dimensional center (if the exponential coordinates are ordered appropriately), and then $\R^m$ for some $m$ which need not be the dimension of the abelianization of $G$.

\subsection{$\Lambda$-nets}

A nilpotent Lie group need not have a lattice \cite{Malcev}, and may only have lattices of large co-volume \cite{Kuranishi}. We introduce \emph{$\Lambda$-nets} as a natural alternative.
\begin{defi}
Let $\Lambda = (\lambda_1, \ldots, \lambda_n)$ have positive entries, and let $G$ be a nilpotent Lie group with exponential coordinates (not necessarily integral). By a \emph{$\Lambda$-net}, we mean the \emph{set} $G(\Lambda)$ of points $(a_1, \ldots, a_n)\subset G$ such that $a_i \in \lambda_i \Z$ for each $i$. We write $\norm{\Lambda}:=\lambda_1 \cdot \ldots \cdot \lambda_n$.
\end{defi}

Generically, $G(\Lambda)$ is not a lattice. There are three important exceptions:
\begin{enumerate}
\item If $G$ is a abelian, then $G(\Lambda)$ always a lattice. 
\item If $G$ is non-abelian but rational and in integral exponential coordinates, and $\Lambda=(\lambda, \ldots, \lambda)$, then $G(\lambda \Z):=G(\Lambda)$ is a lattice if and only if $\lambda\in \N$. 
\item If $G$ is a rational Carnot group in integral exponential coordinates with dilation map $\delta_\lambda$ (see \S \ref{sec:BL} for definitions) then $G(\delta_\lambda (1, \ldots, 1))=\delta_\lambda G(\Z)$ is a lattice for any $\lambda>0$.
\end{enumerate}

\begin{lemma}
\label{lemma:lambda-net}
Every $\Lambda$-net is a separated net with an associated tiling by left-translates of a $\Lambda$-box $I_\Lambda=[\lambda_1/2,\lambda_1/2)\times \ldots \times [-\lambda_n/2,\lambda_n/2)$ of volume $\norm{\Lambda}$.
\begin{proof}
If $G$ is abelian, the lemma is obvious. 

If $G$ is non-abelian, we prove the lemma by induction on the dimension of $G$ using a projection system. Let $G'=G/X_n$ be the next group in the projection system, set $\Lambda'=(\lambda_1, \ldots, \lambda_{n-1})$ and assume by way of induction that the lemma is true for $G'(\Lambda')$.

Consider now the collection of left translates $G(\Lambda)*I_\Lambda$ of $I_\Lambda$ by elements of $G(\Lambda)$. Setting $X_n(\Lambda) = X_n\cap G(\Lambda)$, we have that the $X_n(\Lambda)$-translates of $I_\Lambda$ form a \emph{column} of boxes with disjoint interiors, and that this column projects to $I_{\Lambda'}$. More generally, the tiling $G(\Lambda)*I_\Lambda$ separates under the $X_n(\Lambda)$ action into a collection of columns, each of which projects to a tile in $G'$.  

Since the interiors of the projected tiles are disjoint in $G'$, the interiors of the columns are disjoint, and thus all the tiles $G(\Lambda)*I_\Lambda$ are disjoint. It is likewise clear that the tiles fill all of $G$, as desired.

Let $0<a < b < \infty$ be the minimal and maximal distance from $0$ to points of $\partial I_\Lambda$. We then have that $G(\Lambda)$ is an $(a, b)$-separated net.
\end{proof}
\end{lemma}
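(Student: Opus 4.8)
The plan is to induct on the dimension of $G$ using the projection system from \S\ref{sec:projection}, exactly as suggested by the structure of the statement. The base case $G$ abelian is immediate: $G(\Lambda)$ is literally the lattice $\lambda_1\Z \times \cdots \times \lambda_n\Z$ and the $\Lambda$-box tiles by translation. For the inductive step, let $G' = G/X_n$ be the next group in the projection system, with projection $\pi : G \to G'$, and set $\Lambda' = (\lambda_1,\ldots,\lambda_{n-1})$; by induction $G'(\Lambda')$ is a separated net and $G'(\Lambda') * I_{\Lambda'}$ is a tiling of $G'$ by translates of the $(n-1)$-dimensional box.

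The key structural observation I would make precise is that $\pi(G(\Lambda)) = G'(\Lambda')$ and that the fiber $X_n(\Lambda) := X_n \cap G(\Lambda)$ is exactly $\lambda_n\Z$ sitting in the central coordinate direction. Then I would organize the translates $G(\Lambda)*I_\Lambda$ into \emph{columns}: for a fixed coset representative, the $X_n(\Lambda)$-translates of $I_\Lambda$ stack up (since multiplication by a central element $\exp(k\lambda_n \xi_n)$ only shifts the last coordinate by $k\lambda_n$) to form a vertical strip with pairwise disjoint interiors whose union is $\pi^{-1}(I_{\Lambda'})$ intersected appropriately — more carefully, each column projects onto a single tile $\gamma' * I_{\Lambda'}$ of $G'$. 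Because the projected tiles have disjoint interiors in $G'$ and cover $G'$, the preimage columns have disjoint interiors and cover $G$; and within each column disjointness and covering are the one-dimensional statement. Stringing these together gives that $G(\Lambda)*I_\Lambda$ is a tiling of $G$.

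For the separated-net conclusion, once we know $G(\Lambda)*I_\Lambda$ is a tiling, it is elementary: let $0 < a \le b < \infty$ be the minimal and maximal distances (in the left-invariant Riemannian metric $d_G$) from $0$ to $\partial I_\Lambda$ — these are finite and positive since $I_\Lambda$ is a bounded set with $0$ in its interior. Left-invariance of $d_G$ transports this to every tile, so distinct points of $G(\Lambda)$ are at distance $\ge 2a$ (their open $a$-balls lie in disjoint tiles, hence are disjoint... more simply, $d_G(\gamma_1,\gamma_2) \ge a$ since $\gamma_2 \notin \gamma_1 * I_\Lambda$ forces leaving the box), while every point of $G$ lies in some tile $\gamma * I_\Lambda$ and hence within $b$ of $\gamma \in G(\Lambda)$, giving coarse density.

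The main obstacle is the column argument: one must verify carefully that left-multiplication by the central lattice $X_n(\Lambda)$ really does partition the tiling $G(\Lambda)*I_\Lambda$ into columns that project bijectively (on the level of tiles) to the tiling $G'(\Lambda')*I_{\Lambda'}$. This requires checking (i) that $\pi$ maps $I_\Lambda$ onto $I_{\Lambda'}$ with fibers intervals of length $\lambda_n$ — true because $\pi$ just forgets the last coordinate and $I_\Lambda = I_{\Lambda'} \times [-\lambda_n/2, \lambda_n/2)$; (ii) that $\pi(\gamma * I_\Lambda) = \pi(\gamma) * I_{\Lambda'}$ — true since $\pi$ is a homomorphism; and (iii) that two elements of $G(\Lambda)$ project to the same element of $G'(\Lambda')$ iff they differ by an element of $X_n(\Lambda)$, so the fibers of $\pi|_{G(\Lambda)}$ are precisely cosets of $\lambda_n\Z$. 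Everything else reduces to the inductive hypothesis in $G'$ and the trivial one-dimensional tiling of the central fiber by intervals of length $\lambda_n$.
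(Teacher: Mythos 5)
Your proposal is correct and follows essentially the same route as the paper: induction on dimension through the projection system, decomposition of $G(\Lambda)*I_\Lambda$ into columns indexed by $X_n(\Lambda)$-cosets projecting to the inductively-obtained tiling of $G'$, and then the standard minimal/maximal boundary-distance argument for uniform discreteness and coarse density. The extra details you spell out in points (i)--(iii) about the column structure are exactly what the paper's more terse column argument relies on implicitly.
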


Theorem \ref{thm:LambdaIsNice} follows from Lemma \ref{lemma:lambda-net} by a standard argument (see \cite[Proposition 2.1]{HKW}).

\subsection{Coarse Perimeter}
\label{sec:coarseperimeter}

The \emph{coarse perimeter} $p(A)$ of a bounded measurable set $A\subset G$ can be defined in two ways, depending on a choice of $r>0$:
\begin{enumerate}
\item The volume $\norm{N_r(\partial A)}$ of the $r$-neighborhood of the topological boundary of $A$,
\item The number of points $\# Y$ in a maximal $r$-separated net $Y\subset \partial A$.
\end{enumerate}

We say that two notions $p_1$ and $p_2$ of perimeter are \emph{equivalent} if for some $C>0$ one has $p_1(A)\leq Cp_2(A)$ and $p_1(A)\leq Cp_2(A)$ for all bounded measurable sets $A$. It follows easily from the homogeneity of $G$ that that the two definitions of coarse perimeter above are equivalent and that their equivalence classes do not depend on the choice of parameter $r$ or maximal net $Y$. For example, fix a bounded measurable set $A\subset G$,
a maximal $1$-separated net $Y_1 \subset \partial A$ and a maximal $r$-separated net $Y_r \subset \partial A$ for $r>1$. Let $C$ be largest number of 1-separated points that fit inside a ball of radius $r$. Then $\#Y_1 \leq C \#Y_r$. The remaining inequalities follow analogously.

\begin{remark}
Note that coarse perimeter does \emph{not} agree with surface area for smoothly bounded sets $A$. In the next section we also define a notion, related to coarse perimeter, of a metric boundary of a discrete set.
\end{remark}

In rational nilpotent Lie groups, we will be interested in the perimeter of the unit cube $I_G=[-1/2, 1/2)^n$ and the perimeter of finite unions of its translates by the group $G(\Z)$. For such sets, we work with a combinatorial notion of perimeter that allows for inductive arguments on dimension (cf.\ Figure \ref{fig:dyadic}).

\begin{defi}
A \emph{face} of $I_G$ is a maximal non-trivial intersection $\partial I_G \cap g \partial I_G$ for some $0\neq g\in G(\Z)$. A choice of positive weights on the faces of $I_G$ induces a \emph{weighted perimeter measure} on the boundary of any union of tiles. If $A$ is a finite union of unit tiles, then we refer to the weighted perimeter measure as its \emph{combinatorial perimeter}.
\end{defi}

As in the Euclidean case, the boundary of the cube $I_G$ decomposes into \emph{horizontal} and \emph{vertical} faces:
\begin{lemma}
\label{lemma:faces}
The boundary of $I_G$ decomposes into finitely many faces of two types: horizontal (with constant $x_n$ coordinate) faces and vertical (non-constant $x_n$ coordinate). The projection of a horizontal face is all of $I_{G'}$ while the projection of a vertical face is a face of $I_{G'}$. Additionally the inverse image of such a face is a union of vertical faces.
\begin{proof}Finiteness follows from the compactness of $\partial I_G$ and the fact that $G(\Z)$ acts properly on $G$.

Every element $g\in G(\Z)$ has the form $g'* t$ for $g'\in G'$ and $t\in X_n$. Taking $g'=0$ gives the horizontal faces (top and bottom).  If a face is horizontal, then it is of the form $\partial I_G \cap g'* t * \partial I_G$ with $g'\neq 0$. The columns $X_n*I_G$ and $g'*X_n*I_G$ intersect along the boundary, giving a non-trivial intersection between $\partial I_{G'}$ and $g'*\partial I_{G'}$. Thus, each vertical face lives in the preimage of a face of $I_{G'}$.
\end{proof} 
\end{lemma}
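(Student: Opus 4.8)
The plan is to run the argument inductively along the projection system of \S\ref{sec:projection}. Write $G' = G/X_n$ and let $\pi\colon G\to G'$ be the projection; recall that $\pi$ is a Lie group homomorphism whose kernel $X_n$ is the last coordinate axis and lies in the center of $G$, and that (Lemma~\ref{lemma:lambda-net}) the left $G(\Z)$--translates of $I_G$ tile $G$. The first observation I would record is that, since $X_n$ is central and agrees with the $x_n$--axis, left translation by $(0,\dots,0,k)\in X_n\cap G(\Z)$ is just the shift $x_n\mapsto x_n+k$; hence the $X_n\cap G(\Z)$--translates of $I_G$ form a column tiling $\pi^{-1}(I_{G'})$, and two boxes of this column have a non-trivial boundary intersection only when they are consecutive, in which case that intersection is a horizontal slab $\overline{I_{G'}}\times\{\text{half-integer}\}$.

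For finiteness I would note that the tiling $\{g I_G : g\in G(\Z)\}$ is locally finite and $\overline{I_G}$ is compact, so only finitely many $g\in G(\Z)\setminus\{0\}$ give a non-trivial intersection $\partial I_G\cap g\,\partial I_G$, whence finitely many faces. Each face is then classified by $g' := \pi(g)\in G'(\Z)$. If $g'=0$ then $g\in X_n$, and by the column description $g = (0,\dots,0,\pm 1)$, so the face is one of the two horizontal faces $\overline{I_{G'}}\times\{\pm 1/2\}$, whose $x_n$ is constant and whose projection is all of $I_{G'}$. If $g'\neq 0$ the face has non-constant $x_n$ and is, by definition, vertical. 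For the vertical faces, centrality of $X_n$ gives $g\cdot\pi^{-1}(I_{G'}) = \pi^{-1}(g' I_{G'})$, so $g\,\overline{I_G}\subseteq \pi^{-1}(g'\overline{I_{G'}})$ while $\overline{I_G}\subseteq\pi^{-1}(\overline{I_{G'}})$; as $I_{G'}$ and $g' I_{G'}$ are distinct tiles of the $G'(\Z)$--tiling, $\overline{I_{G'}}\cap g'\overline{I_{G'}} = \partial I_{G'}\cap g'\,\partial I_{G'}$. Thus the vertical face lies in $\pi^{-1}(\partial I_{G'}\cap g'\,\partial I_{G'})$, and since $\partial I_{G'}\cap g'\,\partial I_{G'}$ is a non-trivial intersection it sits inside a face of $I_{G'}$; this is the assertion that a vertical face projects into a face of $I_{G'}$. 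Conversely, given a face $\widehat S$ of $I_{G'}$, the set $\pi^{-1}(\widehat S)\cap\partial I_G$ is a subset of $\partial I_G$ and is therefore covered by faces of $I_G$; these faces cannot be horizontal, since a horizontal face meets $\pi^{-1}(\widehat S)$ only in the lower-dimensional slice $\widehat S\times\{\pm 1/2\}$, so $\pi^{-1}(\widehat S)\cap\partial I_G$ is a union of vertical faces, which is the final assertion.

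The hard part, which has no counterpart in the Euclidean case, is that the tile $g I_G$ is \emph{not} the Cartesian product $(g' I_{G'})\times(\text{interval})$: the Baker--Campbell--Hausdorff polynomial $p_n$ shears the fibers of $\pi$, so even in the Heisenberg group a single ``side wall'' of $I_G$ can break into several vertical faces (a hexagon together with two triangles) under the $G(\Z)$--tiling, and such a vertical face need not project \emph{onto} a full face of $I_{G'}$ --- only into one. The column picture must therefore be extracted from the centrality of $X_n$ rather than from a naive product decomposition, and the remaining work is the routine but fiddly dimension- and maximality-bookkeeping needed to see that the faces of $\partial I_G$ respect the partition of $\partial I_{G'}$ into faces in the sense above. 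This ``lives in the preimage of a face'' statement is precisely what the later combinatorial-perimeter inductions require.
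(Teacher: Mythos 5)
Your proof follows the same route as the paper's: classify each face $\partial I_G \cap g\,\partial I_G$ by whether $g' := \pi(g)$ vanishes, observe that $g'=0$ forces $g\in X_n$ and yields the two horizontal caps, and for $g'\neq 0$ use the homomorphism property of $\pi$ and the column structure to drop the intersection down to a non-trivial intersection $\partial I_{G'}\cap g'\,\partial I_{G'}$, which sits in a face of $I_{G'}$ by finiteness and maximality. The one place you go beyond the paper is in flagging, correctly, that the lemma's phrase ``the projection of a vertical face \emph{is} a face of $I_{G'}$'' is an overstatement: in $\Heis^1$ the face obtained from $g=(1,0,1)$ is the triangle $\{(1/2,y,z): 0\leq y\leq 1/2,\ 1/2-y/2\leq z\leq 1/2\}$, which projects onto only half a side of $I_{\R^2}$. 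The paper's proof (like yours) only establishes ``lives in the preimage of a face,'' and that weaker statement is all that is used in the perimeter inductions that follow. Your observation that the column $\pi^{-1}(I_{G'})$ must be extracted via centrality of $X_n$ rather than from a naive product decomposition, because BCH shearing destroys the Cartesian structure, is accurate and worth keeping; the paper elides it. The concluding step you call ``routine but fiddly bookkeeping'' — that $\pi^{-1}(\widehat S)\cap\partial I_G$ is a union of vertical faces for a face $\widehat S$ of $I_{G'}$ — is handled in the paper at the same level of terseness, so no gap relative to the source.
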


\begin{lemma}
Fix a weighted perimeter measure on $I_G$ and let $A$ be a finite union of unit tiles. Then the coarse perimeter of $A$ is equivalent to the weighted perimeter measure of $\partial A$.
\begin{proof}
Since the choice of weighted perimeter measure does not affect its equivalence class, we may simply count faces. Let $F$ be the number of faces of $A$.

Fix $r>0$. Let $0<c_1<C_1$ be constants such that the $r$-neighborhood of any face of $I_G$ (viewed independently) has volume between $c_1$ and $C_1$. Let $C_2$ be the maximum number of faces intersecting any ball of radius $r$. Then the perimeter of $A$, computed as the volume of the $r$-neighborhood of $\partial A$, is bounded above  by $C_1 F$ and below by $(c_1/C_2) F$.
\end{proof}
\end{lemma}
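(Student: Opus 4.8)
The plan is to compare both notions of perimeter to a single combinatorial quantity: the number $F = F(A)$ of faces, i.e.\ $G(\Z)$-translates of faces of $I_G$, that make up $\partial A$. Once I show that the weighted perimeter measure of $\partial A$ and the coarse perimeter $p(A)$ are each equivalent to $F$, the lemma follows by transitivity of the equivalence relation on perimeter notions recorded in \S\ref{sec:coarseperimeter}.

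First I would eliminate the weights. By the finiteness statement inside Lemma~\ref{lemma:faces} (compactness of $\partial I_G$ together with properness of the $G(\Z)$-action), $I_G$ has only finitely many faces, and every face occurring in $\partial A$ is a $G(\Z)$-translate of one of them, carrying the corresponding weight; letting $w_{\min}$ and $w_{\max}$ be the smallest and largest of these finitely many positive weights, the weighted perimeter measure of $\partial A$ lies between $w_{\min} F$ and $w_{\max} F$, hence is equivalent to $F$. Since the equivalence class of coarse perimeter is independent of the chosen radius, I fix $r > 0$ and use $p(A) = \norm{N_r(\partial A)}$. For each of the finitely many faces $\phi$ of $I_G$, the set $N_r(\phi)$ is bounded (as $\phi \subseteq \partial I_G$) and has nonempty interior (it contains a ball of radius $r$ about any point of $\phi$), so $\norm{N_r(\phi)}$ lies in a fixed interval $[c_1, C_1]$ with $0 < c_1 \le C_1 < \infty$, and by left-invariance of Haar measure the same bounds hold for every $G(\Z)$-translate. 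Writing $\partial A = \bigcup_{i=1}^{F} \phi_i$ with each $\phi_i$ a translated face, subadditivity of volume gives $p(A) = \norm{N_r(\partial A)} \le \sum_i \norm{N_r(\phi_i)} \le C_1 F$.

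For the reverse inequality I would invoke bounded overlap. Because the $\phi_i$ are translates of finitely many compact sets under the proper action of $G(\Z)$, and using the homogeneity of $G$ (any ball of radius $r$ can be moved into $N_r(\overline{I_G})$ by a $G(\Z)$-translation, which permutes the translated faces), there is a constant $C_2$, independent of $A$, such that each point of $N_r(\partial A)$ lies in at most $C_2$ of the sets $N_r(\phi_i)$. Integrating this pointwise bound yields $p(A) \ge C_2^{-1}\sum_i \norm{N_r(\phi_i)} \ge (c_1/C_2)\, F$, so combining, $(c_1/C_2)\, F \le p(A) \le C_1 F$, and $p(A)$ is equivalent to $F$ and hence to the weighted perimeter measure of $\partial A$. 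I do not anticipate a genuine obstacle: the whole argument rests on the finiteness of face-types and the properness of the $G(\Z)$-action, the latter being precisely what bounds how many faces accumulate near a point. The one step deserving care — the nearest thing to a hard part — is verifying that, via the tiling property of $G(\Z) * I_G$ from Lemma~\ref{lemma:lambda-net} (applied with $\Lambda = (1,\dots,1)$), $\partial A$ is genuinely exhausted by such translated faces of $I_G$ with controlled overlap, so that the combinatorial count $F$ is well defined and comparable to the weighted measure.
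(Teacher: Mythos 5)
Your proof is correct and follows essentially the same route as the paper's: reduce to counting faces $F$, bound each $\norm{N_r(\phi_i)}$ in a fixed interval $[c_1,C_1]$, get the upper bound $p(A)\le C_1F$ by subadditivity, and get the lower bound $(c_1/C_2)F\le p(A)$ from a bounded-overlap constant $C_2$. You supply slightly more justification than the paper (finiteness of face types, left-invariance of Haar measure, properness of the $G(\Z)$-action) for why those constants exist, but the argument is structurally identical.
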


\begin{defi}[Perimeter measure consistent with projections]
We will mostly work with combinatorial measures that are \emph{consistent with projections}. That is, for each projection in the projection system, we assume that each horizontal face has weight $1$, and that the weight of a face of $I_{G'}$ agrees with the total measure of its preimage in $I_G$.
\end{defi}

\begin{example}
Consider (see Figure \ref{fig:dyadic}) the projection $\pi: \Heis^1 \rightarrow \R^2$ of the three dimensional Heisenberg group to $\R^2$. In this case, $I_{\Heis^1}$ is the unit cube, projecting to the unit square $I_{\R^2}$ in $\R^2$. Each side of the square lifts to become three vertical faces of $I_{\Heis}$ (we should distribute a total weight 1 among these), and two more horizontal faces are added as usual (these each get weight 1). 
\end{example}

\subsection{Nilpotent dyadic tiles and efficient counting}\label{sec:dyadic}

Let $G$ be a rational nilpotent Lie group, viewed in integral exponential coordinates and $I_G$ the unit cube centered at the origin. By a \emph{tile} we will mean a translate of the unit cube $I_G$ by some element of $G(\Z)$. In addition, we define \emph{nilpotent dyadic tiles} on $G$ as follows:

\begin{defi}[Nilpotent dyadic groups and tiles]
Consider first the family of subgroups $\{G(2^i \Z) \st i\in \N\}$. For $i\geq 1$, $G(2^i \Z)$ has index $2^n$ in $G(2^{i-1}\Z)$. Set 
$$A_i := \{0, 2^{i-1}\}^n,$$ 
so that $G(2^i\Z)*A_i = G(2^{i-1}\Z)$ and $G(2^i\Z)*A_i*A_{i-1}*\ldots*A_1=G(\Z)$.

A \emph{discrete nilpotent dyadic tile of level $\ell$} is a subset $G(\Z)$ of the form
$$g*A_\ell*A_{\ell-1}*\ldots*A_1$$
for $g\in G(2^\ell \Z)$. 
A (continuous) nilpotent dyadic tile of level $\ell$ at $g\in G(2^\ell \Z)$ is the set
$$g*A_\ell*A_{\ell-1}*\ldots*A_1*I_G.$$
\end{defi}

\begin{figure}[h]
\centerline{\includegraphics[height=100px]{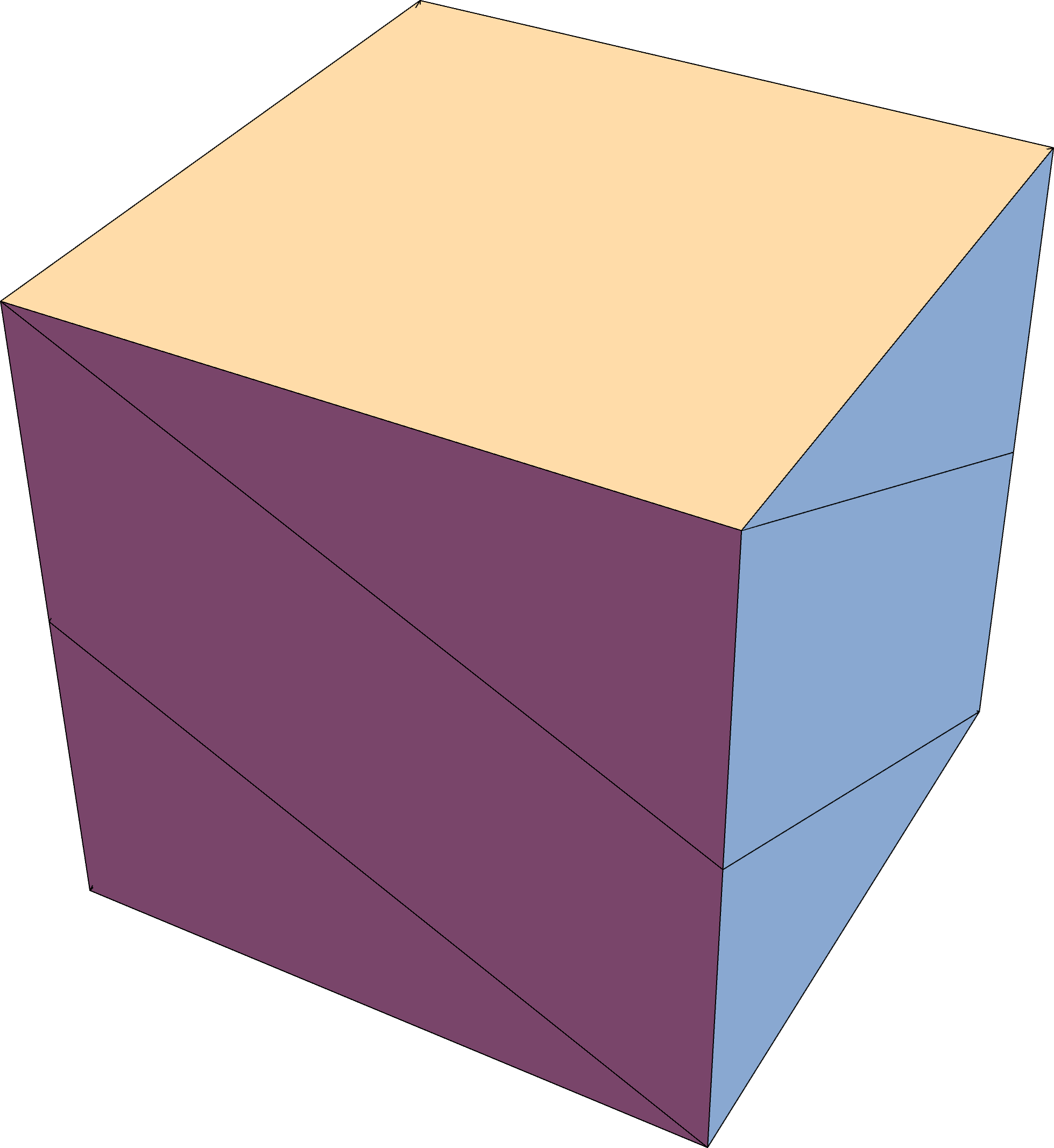} \includegraphics[height=100px]{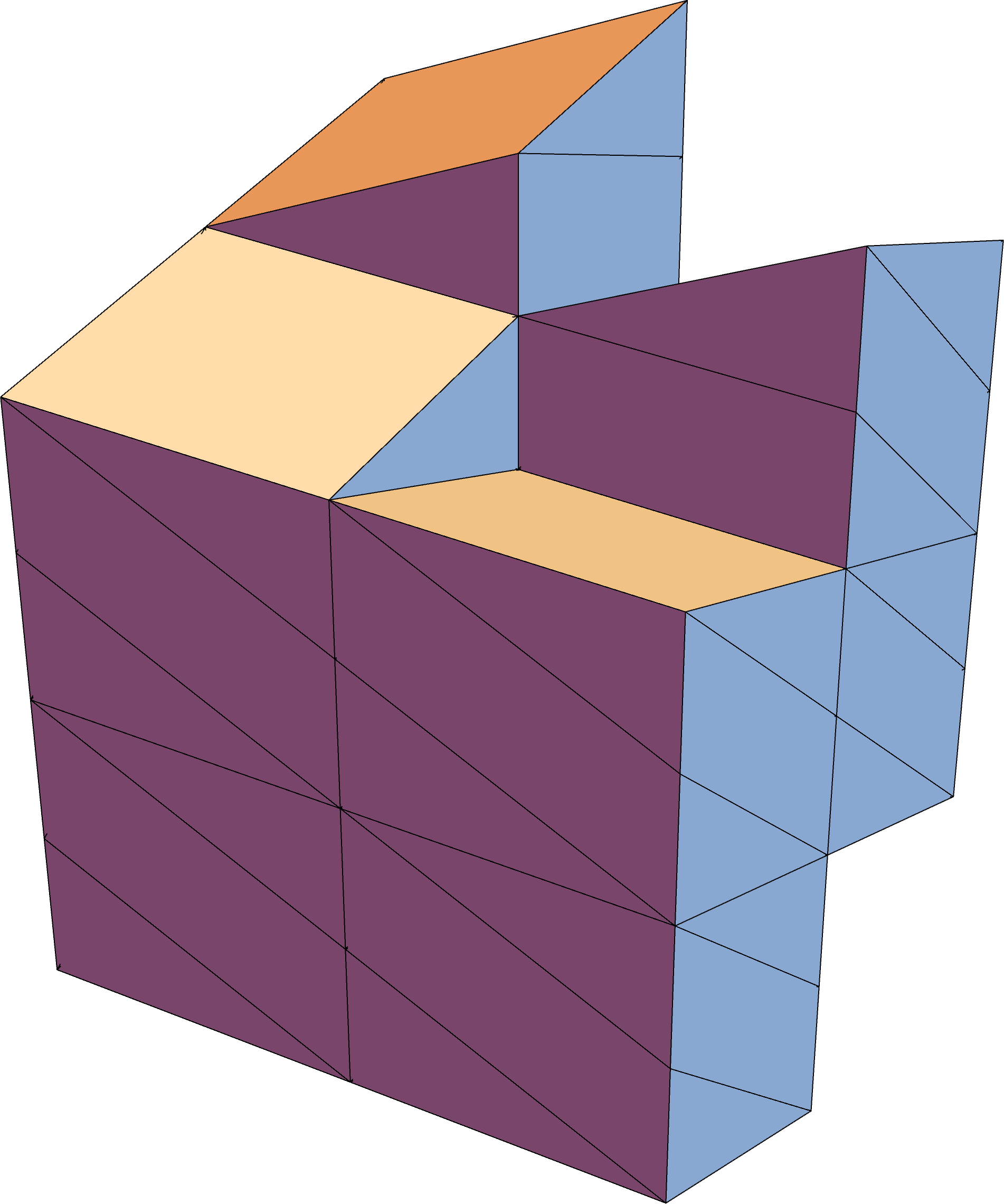} \includegraphics[height=100px]{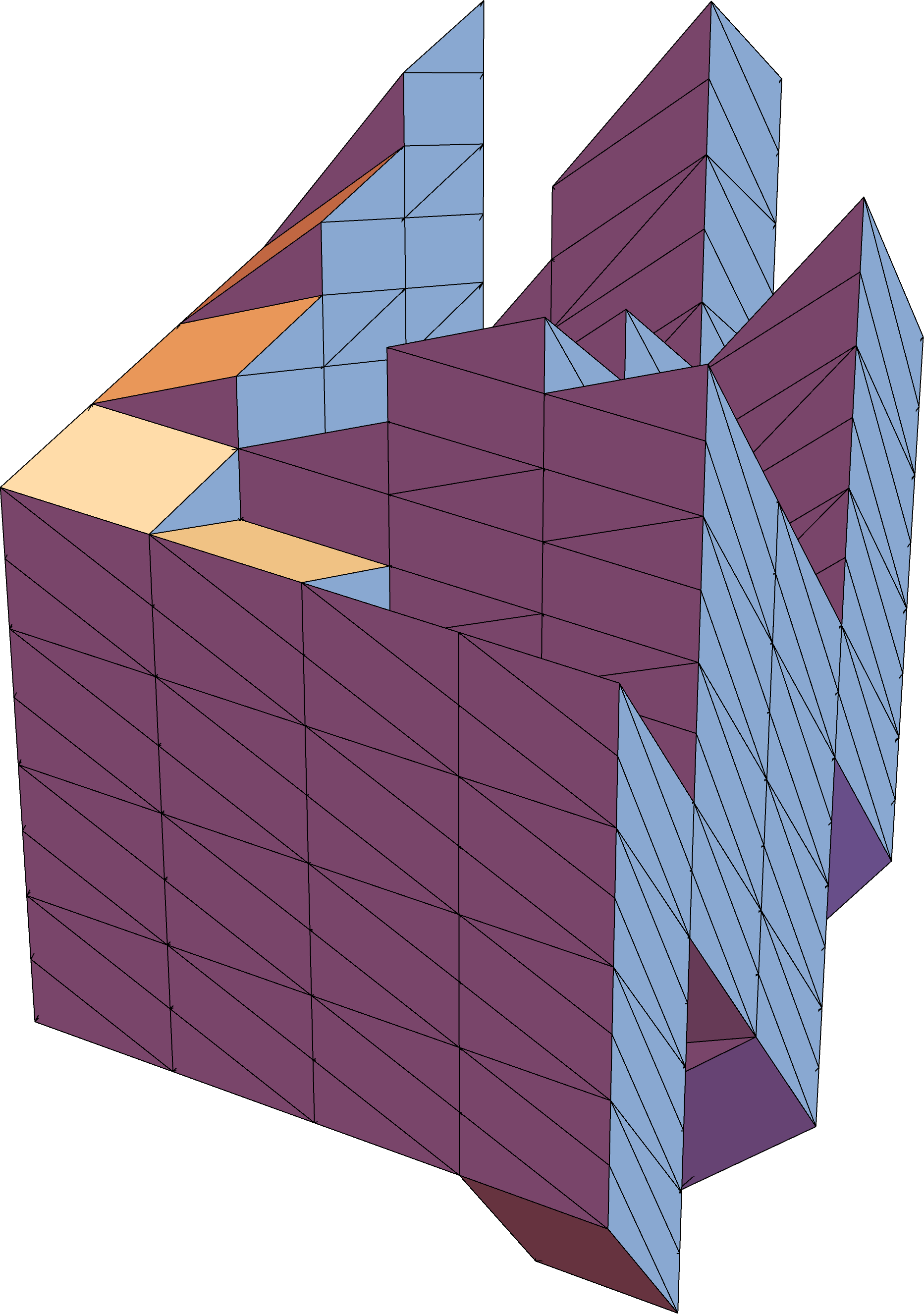}}
\caption{Nilpotent dyadic tiles of level 0, 1, 2 in the Heisenberg group (with polynomial $p_3=-0.5(xy'-yx')$ in the group law), rescaled using Euclidean dilation.}
\label{fig:dyadic}
\end{figure}

\begin{defi}
A finite union of tiles $A\subset G$ is \emph{describable} by a collection $\mathcal T$ of dyadic cubes if one can write $A$ using disjoint unions and proper differences involving elements of $\mathcal T$, with the restriction that each element of $\mathcal T$ should be used only once.
\end{defi}

It is clear that every finite union of tiles is describable using the unit tiles it contains, but a more efficient description is available if it has small perimeter. Consider, for example, the description ``the $1024\times1024$ square in $\R^2$ based at the origin with the top-right $1\times1$ square removed.''

Laczkovich provides in \cite{Laczkovich92UnifSpread} an algorithm for efficiently describing unions of tiles using dyadic tiles. While his proof is stated for $\R^n$, reading it with our definitions in mind provides:

\begin{thm}[Efficient Counting]
\label{thm:efficientnilpotent}
Let $G$ be a rational nilpotent Lie group of topological dimension $n$, viewed in integral exponential coordinates. Then there exists $C>0$ such that any finite union $A$ of tiles is describable by a collection of nilpotent dyadic tiles $\{T_i\}$ such that the number of tiles of each level satisfies
$$\#\{ T_i \st \ell(T_i)=k\} \leq C \frac{p(A)}{2^{k(n-1)}}.$$
\end{thm}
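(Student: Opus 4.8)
The plan is to reduce the nilpotent statement to Laczkovich's original algorithm in $\R^n$ by exploiting the projection system of \S\ref{sec:projection} together with the combinatorial/weighted perimeter machinery of \S\ref{sec:coarseperimeter}. The key observation is that, in integral exponential coordinates, the underlying \emph{set} partition structure of $G(\Z)$, the unit tiles, and the dyadic tiles $g * A_\ell * \cdots * A_1 * I_G$ is combinatorially identical to the Euclidean one: each $A_i = \{0,2^{i-1}\}^n$ and the nesting $G(2^i\Z)*A_i = G(2^{i-1}\Z)$ is exactly as in $\R^n$. What differs is only which Euclidean cube a given dyadic ``word'' $g*A_\ell*\cdots*A_1*I_G$ occupies as a point-set (the group law reshuffles the cubes), but the containment lattice of describable sets is unchanged. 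So Laczkovich's algorithm, which manipulates only the abstract tree of dyadic subdivisions and the boundary faces between tiles, runs verbatim; I would spell this out by recalling the structure of his argument and checking each manipulation is purely combinatorial.

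The one genuinely non-trivial point is the perimeter bookkeeping: Laczkovich's bound $\#\{T_i : \ell(T_i)=k\} \le C\, p(A)/2^{k(n-1)}$ comes from the fact that a dyadic tile of level $k$ used in the description must have a face lying along $\partial A$, and a level-$k$ face carries $(n-1)$-dimensional measure comparable to $2^{k(n-1)}$. In $G$ this heuristic must be replaced by the combinatorial/weighted perimeter of \S\ref{sec:coarseperimeter}. So the steps I would carry out are: (1) fix the combinatorial perimeter measure on $I_G$ consistent with projections (Lemma~\ref{lemma:faces} guarantees it exists and that horizontal faces project onto $I_{G'}$ and vertical faces onto faces of $I_{G'}$); (2) verify that for a level-$k$ dyadic tile, each of its faces, measured in the rescaled combinatorial perimeter, has weight comparable to $2^{k(n-1)}$ — this follows inductively on dimension using the projection system, since a horizontal face projects to a copy of $I_{G'}$ (an $(n-1)$-dimensional dyadic cube of the same level, inductively of weight $\sim 2^{k(n-2)}$, times the $2^k$ vertical extent) and a vertical face sits over a face of $I_{G'}$; (3) invoke the equivalence (proved in \S\ref{sec:coarseperimeter}) of combinatorial perimeter with coarse perimeter $p(A) = \norm{N_r(\partial A)}$; (4) run Laczkovich's description algorithm on the combinatorial tree and count, for each level $k$, the dyadic tiles used, charging each to a distinct face on $\partial A$, which yields $\#\{T_i : \ell(T_i)=k\} \le C'\, (\text{combinatorial perimeter of }A)/2^{k(n-1)} \le C\, p(A)/2^{k(n-1)}$.

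The main obstacle I expect is step (2) together with making precise the sense in which ``Laczkovich's algorithm runs verbatim.'' The subtlety is that a dyadic tile in $G$, as a point-set, is a union of unit cubes that may be scattered (see Figure~\ref{fig:dyadic}), so ``a face of a dyadic tile lies on $\partial A$'' has to be interpreted at the level of the abstract subdivision tree rather than geometrically; one must check that the charging argument — each selected dyadic tile of level $k$ corresponds to a boundary face, distinct tiles to distinct faces — survives this reinterpretation, and that the per-level face count is genuinely controlled by the total perimeter rather than double-counted. Concretely this requires that when the algorithm subdivides a tile of level $k$ into its $2^n$ children of level $k-1$, the faces introduced are ``interior'' (cancel) unless they abut $\partial A$, which is exactly the content of describability by disjoint unions and proper differences; verifying this is bookkeeping but is where all the care goes. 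Once the combinatorial perimeter of a level-$k$ face is pinned at $\asymp 2^{k(n-1)}$ and its equivalence with $p(A)$ is in hand, the estimate drops out of Laczkovich's counting exactly as in \cite{Laczkovich92UnifSpread}.
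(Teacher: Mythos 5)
Your proposal is correct and takes essentially the same route as the paper: the paper's entire argument is the remark that Laczkovich's algorithm in \cite{Laczkovich92UnifSpread} manipulates only the abstract dyadic subdivision tree and a combinatorial perimeter, so it transfers once one has (a) the nesting structure $G(2^i\Z)*A_i = G(2^{i-1}\Z)$ mirroring the Euclidean tree, (b) the face/combinatorial-perimeter machinery of \S\ref{sec:coarseperimeter} with its equivalence to $p(\cdot)$, and (c) the count that a level-$k$ dyadic tile has $\asymp 2^{k(n-1)}$ boundary faces (via the projection system and Lemma~\ref{lemma:faces}, inductively: $\asymp 2^{k(n-1)}$ horizontal caps plus $\asymp 2^{k(n-2)} \cdot 2^k$ vertical faces up to bounded multiplicity). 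You have correctly identified (c) and the "algorithm runs verbatim on the abstract tree" claim as the places where care is needed; the paper leaves precisely these to the reader.
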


\begin{remark}\label{remark:CarnotDyadic}
If $G$ is a Carnot group (see \S \ref{sec:BL}), one can define a \emph{different} notion of dyadic tile based on the family of subgroups $\delta_{2^i}G(\Z)$. The $\delta$-rescaled dyadic tiles then limit to a tile defined by Strichartz in \cite{Strichartz}. For such dyadic tiles, Theorem \ref{thm:efficientnilpotent} is valid with $(n-1)$ replaced by $(Q-s)$ where $Q$ is the homogeneous dimension of $G$ and $s$ is its step. Surprisingly, the larger height of these Carnot dyadic tiles makes them unsuitable for our applications below.
\end{remark}

\section{Bounded Distance Perturbations}\label{sec:BD}

In  \cite{Laczkovich92UnifSpread} Laczkovich gives several criteria for when two separated nets in $\R^n$ are bounded distance from each other. The first criterion involves comparing the discrepancy between the separated nets on all bounded measurable sets relative to the size of their coarse perimeters. The second criterion involves the same estimate but now for sets that are disjoint unions of unit cubes. In this section we generalize 
both of Laczkovich's criteria where now unit cubes are replaced by the tiles
$G(\Z)*I_G$. 
 
\subsection{Whyte's Theorem}
\label{sec:whyte}
We start with Whyte's work \cite{Whyte} on bijective quasi-isometries between \emph{uniformly discrete bounded geometry spaces} (UDBG for short). Recall 
that a uniformly discrete space has bounded geometry if there is a function $\alpha: \R_+ \to \R_+$  such that the cardinality of all balls of radius $t$ is bounded by $\alpha(t)$. All separated nets in Lie groups are UDBG spaces but in this paper  we focus only on nilpotent Lie groups.  

Whyte's original work uses the language of uniformly finite homology but for our applications we can restate his theorem in the following way:

\begin{thm}[Whyte]\label{Whyte:thm} Suppose $Y_1, Y_2 \subset G$ are two separated nets in a nilpotent Lie group. Then $Y_1$ is bounded distance from $Y_2$ if and only if for all $r\gg 0$ there exists a $C\geq 0$ such that for all finite $ A \subset Y:=Y_1 \cup Y_2$ we have
\begin{equation}\label{whyte:eqn}  | \#(Y_1 \cap A) - \#(Y_2 \cap A)|  \leq  C \#(\partial_r A).\end{equation}
\end{thm}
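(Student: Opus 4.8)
The plan is to derive the statement from Whyte's original formulation in terms of uniformly finite homology, which is really a combinatorial Hall-type matching argument dressed up homologically. The forward direction is the easy one: if $f\colon Y_1 \to Y_2$ is a BD bijection with displacement bounded by some $C_0$, then for any finite $A \subset Y = Y_1 \cup Y_2$ the points of $Y_1 \cap A$ whose image under $f$ leaves $A$, together with the points of $Y_2 \cap A$ that are images of points of $Y_1$ outside $A$, must all lie within distance $C_0$ of the topological/combinatorial boundary of $A$ inside $Y$. Since $f$ is a bijection, the cardinalities $\#(Y_1 \cap A)$ and $\#(Y_2 \cap A)$ can differ only by the count of these ``unmatched near the boundary'' points, which is at most $\#(\partial_r A)$ for $r$ comparable to $C_0$ (using bounded geometry to absorb the constant). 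This gives \eqref{whyte:eqn}.

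For the reverse direction I would run the standard flow/matching argument. Form the bipartite graph on vertex set $Y_1 \sqcup Y_2$ joining $y_1 \in Y_1$ to $y_2 \in Y_2$ whenever $d(y_1,y_2) \le r$ (with $r$ large enough that every point of one net has a neighbor in the other, possible since both are coarsely dense). We want a perfect matching; by a Hall/marriage theorem argument adapted to infinite locally finite bipartite graphs (Rado's theorem, or equivalently the max-flow min-cut / Hall condition for such graphs), a perfect matching exists provided that for every finite $F \subset Y_1$ one has $\#N(F) \ge \#F$, and symmetrically. The hypothesis \eqref{whyte:eqn} is exactly what is needed to verify this deficiency condition: taking $A$ to be a suitable ``thickening'' $F \cup N(F)$, the inequality bounds the imbalance $|\#(Y_1\cap A) - \#(Y_2 \cap A)|$ by $C\#(\partial_r A)$, and a counting argument using bounded geometry converts a violation of Hall's condition into a finite set on which the discrepancy grows faster than its boundary, contradicting \eqref{whyte:eqn}. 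The resulting matching is automatically a BD bijection because matched pairs are within distance $r$ by construction.

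The main obstacle — and the reason this is cited to Whyte rather than proved from scratch — is making the infinite Hall/flow argument rigorous: one must pass from the finite deficiency inequalities to an actual bijection, which requires either a compactness argument over finite subgraphs or the König/Rado infinite matching theorem, and one must carefully match the various constants ($r$ in the graph, the displacement bound, the $r$ in $\partial_r$, and the bounded-geometry function $\alpha$) so that the quantifier structure ``for all $r \gg 0$ there exists $C$'' is respected. I would therefore present the forward direction in full and, for the converse, reduce to the precise statement in \cite{Whyte} that a perfect matching in the $r$-neighborhood graph exists under \eqref{whyte:eqn}, citing his uniformly-finite-homology computation (where the class $[Y_1] - [Y_2]$ vanishing in $H_0^{uf}$ is equivalent both to the boundary estimate and to the existence of the bijection) rather than reproving it.
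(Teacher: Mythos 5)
The paper gives no proof of this theorem at all --- it is a restatement (in combinatorial language) of the main result of \cite{Whyte}, and the paper explicitly defers to Whyte's homological argument. Your proposal is therefore taking essentially the same route: you prove the easy forward direction directly (and your bookkeeping there is correct --- the symmetric difference between $f(Y_1\cap A)$ and $Y_2\cap A$ consists of points within $C_0$ of $Y\setminus A$, hence lies in $\partial_r A$ for $r\ge C_0$, and you don't even need bounded geometry for this since the quantifier is ``for all $r\gg 0$''), and for the converse you reduce to Whyte's matching/uniformly-finite-homology result exactly as the paper does; the Hall--Rado sketch you give is the standard unpacking of that result and is consistent with it, so there is no gap, only the same deliberate appeal to the external reference.
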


Here $\partial_r A$ denotes the metric $r$-boundary of $A$ in $Y$. That is, $\partial_r A$ is one of the following sets (whose size is equivalent for sufficiently large $r$):
\begin{enumerate}
\item The outer boundary $N_r(A)\setminus A$,
\item The inner boundary $N_r(Y\setminus A)\cap A$,
\item The collar boundary $[N_r(Y)\setminus A] \cup [N_r(Y\setminus A) \cap A]$.
\end{enumerate}

\begin{remark}
Note that $Y$ is not necessarily uniformly discrete and hence not a UDBG space but we can make it one by a bounded perturbation of $Y_1$ that identifies a point of $Y_1$ with a point of $Y_2$ if they are less that $\epsilon$ apart where $\epsilon$ is chosen to be much less than the uniform discreteness constants of both $Y_i$.
 This perturbation can only change the cardinality of the boundary of any  set by a factor of $2$ and so does not affect the statement of the above theorem. 
\end{remark}

%

\subsection{Measurable Laczkovich-Whyte Criterion}

The goal of this subsection is to prove a version of Theorem \ref{Whyte:thm} with $A$ replaced by an arbitrary bounded measurable set and boundary replaced by coarse perimeter $p(\cdot)$ (see \S  \ref{sec:coarseperimeter} for the definition).

 \begin{thm}[Measurable Laczkovich-Whyte Criterion]\label{thm:LWMeas}
 Let $Y_1, Y_2$ be two separated nets in a nilpotent Lie group $G$. Then there exists a bijection $f:Y_1 \rightarrow Y_2$ satisfying the following:
 there exists $C_f\geq 0$ such that $d(y, f(y))<C_f$ for all $y\in Y_1$  if and only if there exists $C>0$ such that for all bounded measurable sets $A\subset G$,
\begin{equation}\label{discrep:eqn}\norm{\#(Y_1 \cap A) - \#(Y_2\cap A)} \leq  C p(A).\end{equation}
Furthermore, $C$ depends only on $G$ and the uniform discreteness constants of $Y_1$ and $Y_2$.
 \end{thm}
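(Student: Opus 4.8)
The plan is to deduce this measurable criterion from Whyte's combinatorial Theorem~\ref{Whyte:thm} by controlling the discrepancy relative to coarse perimeter, moving back and forth between a bounded measurable set $A$ and finite subsets of $Y = Y_1 \cup Y_2$. The key observation is that coarse perimeter $p(A)$ (computed as $\norm{N_r(\partial A)}$) and the metric boundary $\#(\partial_r A')$ of an associated finite set $A' \subset Y$ are comparable up to multiplicative constants depending only on $G$ and the separation/covolume constants of $Y_1,Y_2$. Indeed, by bounded geometry a ball of radius $R$ contains at most $\alpha(R)$ points of $Y$, and by uniform discreteness plus coarse density every unit-volume region contains a bounded number of points of each $Y_i$ and every point of $G$ is within a bounded distance of $Y$; these two facts let one convert ``volume of a neighborhood of $\partial A$'' into ``number of net points near $\partial A$'' and vice versa.

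For the forward direction, suppose $f:Y_1 \to Y_2$ is a bijection with $d(y,f(y)) < C_f$. Given a bounded measurable $A$, the points of $Y_1 \cap A$ whose image under $f$ leaves $A$, together with the points of $Y_1$ outside $A$ whose image lands in $A$, all lie within distance $C_f$ of $\partial A$; since $f$ is a bijection, $\#(Y_1\cap A) - \#(Y_2 \cap A)$ equals the signed count of these ``bad'' points, hence $\norm{\#(Y_1\cap A) - \#(Y_2\cap A)} \leq \#\{y \in Y : d(y,\partial A) \leq C_f\} \leq C\, p(A)$, where the last step uses that $Y$ has bounded geometry and that the $(C_f{+}1)$-neighborhood of $\partial A$ has volume comparable to $p(A)$. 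This direction is essentially the easy half and mirrors the classical Laczkovich argument.

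For the converse, assume \eqref{discrep:eqn} holds for all bounded measurable $A$. I want to verify Whyte's inequality \eqref{whyte:eqn}: given a finite $A \subset Y$, build a bounded \emph{measurable} set $\widetilde A \subset G$ by taking (say) the union of the $\Lambda$-box tiles, or unit balls, centered at the points of $A$ — chosen small enough that tiles around distinct net points are disjoint. Then $\#(Y_i \cap \widetilde A)$ agrees with $\#(Y_i \cap A)$ up to a controlled error supported near $\partial_r A$ (points of $Y_i$ lying in a tile around a point of $A$ but not themselves in $A$, or conversely), and $p(\widetilde A) \leq C \#(\partial_r A)$ because the topological boundary of $\widetilde A$ is covered by the boundaries of the finitely many tiles sitting over the boundary points of $A$, each of bounded perimeter. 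Feeding $\widetilde A$ into \eqref{discrep:eqn} and absorbing the error terms yields \eqref{whyte:eqn}, and then Whyte's theorem produces the desired BD bijection $f$. Throughout, every constant produced is a function only of $G$ and the uniform discreteness (and hence, via coarse density, the covolume) constants of $Y_1,Y_2$, giving the final ``furthermore'' clause.

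The main obstacle I anticipate is the bookkeeping in the converse direction: one must be careful that passing from the discrete set $A$ to the continuous set $\widetilde A$ does not inflate either the discrepancy or the perimeter by an amount depending on $A$ rather than on the fixed geometric data — in particular the tiles used to thicken $A$ must have uniformly bounded diameter and perimeter, disjoint interiors, and must individually capture exactly one net point, which forces the size of the thickening to be tied to the separation constants. A secondary technical point is that the metric boundary $\partial_r A$ in Whyte's statement is taken inside the (possibly non-uniformly-discrete) union $Y_1 \cup Y_2$, so one should invoke the remark following Theorem~\ref{Whyte:thm} to first perturb $Y$ into a genuine UDBG space, checking that this perturbation changes all relevant quantities by at most a universal constant factor.
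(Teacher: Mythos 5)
Your forward direction is fine and matches the paper's argument: if $f$ is a $C_f$-BD bijection, the discrepancy on $A$ is controlled by the number of net points near $\partial A$, which is in turn bounded by $C\,p(A)$ using the bounded geometry of $G$. That half is essentially the paper's Claim (Density is preserved).

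The converse direction has a genuine gap, and it is exactly the point you flag as a ``main obstacle'' without actually resolving. You propose thickening a finite $A \subset Y$ to a measurable set $\widetilde A$ by placing \emph{small, pairwise-disjoint} tiles or balls around the points of $A$, with the size tied to the uniform discreteness constants. But if the tiles are disjoint, then \emph{every} tile contributes its full boundary to $\partial \widetilde A$ -- there is no internal cancellation -- so $p(\widetilde A)$ is comparable to $\#A$, not to $\#(\partial_r A)$. Your stated justification, that ``the topological boundary of $\widetilde A$ is covered by the boundaries of the finitely many tiles sitting over the boundary points of $A$,'' is simply false for disjoint tiles: tiles over interior points of $A$ contribute just as much boundary as tiles over boundary points, because nothing absorbs their boundary. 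Feeding this into \eqref{discrep:eqn} then gives only the trivial estimate $\norm{\#(A\cap Y_1)-\#(A\cap Y_2)} \lesssim \#A$, which is useless for verifying Whyte's inequality \eqref{whyte:eqn}.

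The correct choice, as in the paper's Claim (Measurable implies discrete), is the opposite: take $\widetilde A = \bigcup_{y\in A} B_R(y)$ where $R$ is the \emph{coarse density} constant of $Y$, so the balls are \emph{large enough to overlap}. Then balls around points deep inside $A$ are engulfed by their neighbors and contribute nothing to $\partial\widetilde A$; the topological boundary of $\widetilde A$ lies near the metric boundary of $A$, and one gets $\norm{N_r(\partial\widetilde A)} \lesssim \#(\partial_{r+R}A)$ by covering $N_r(\partial\widetilde A)$ with balls around the $Y$-points near $\partial\widetilde A$. One also absorbs the extra net points picked up by $\widetilde A$ (which form $\partial_R A$) into the boundary count. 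The ingredient you never actually invoke in your converse argument is coarse density, and that is precisely the ingredient that makes this step work; tying the thickening radius to the uniform discreteness constants (as you propose) is the wrong direction.
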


\begin{proof}
The proof follows from Theorem \ref{Whyte:thm} and the following two claims. 
\begin{claim}[Density is preserved]
\label{lemma:discrepancypreserved}
Let $Y_1, Y_2$ be two separated nets in a nilpotent Lie group $G$, and $f:Y_1 \rightarrow Y_2$ a bijection satisfying for all $y\in Y_1$ $d(y, f(y))<C_f$ for some $C_f\geq 0$. Then there exists $C>0$ such that for all measurable sets $A\subset G$,
$$\norm{\#(Y_1 \cap A) - \#(Y_2\cap A)} \leq  C p(A).$$
Furthermore, $C$ depends only on $G$ and the uniform discreteness constants of $Y_1$ and $Y_2$.
\begin{proof}
For each $i=0,1$, let $c_i$ be the uniform discreteness constant of $Y_i$. Let $C_i$ be the volume of a ball of radius $c_i$ in $G$, and $C=\operatorname{max}(C_1, C_2)$.

Since points in $A$ must stay near $A$ under $f$, we have
\begin{align*}
\#(Y_1 \cap A) &\leq \#(Y_2 \cap N_{C_f}(A))\  \leq \ \#(Y_2 \cap A) + \#(Y_2 \cap ( N_{C_f}(A) \setminus A))\\
&\leq \#(Y_2 \cap A) + \#(Y_2 \cap ( N_{C_f}(\partial A)))
\ \leq\  \#(Y_2 \cap A) + C \norm{N_{C_f}(\partial A)},
\end{align*}
which is sufficient by symmetry and equivalence of perimeters.
\end{proof}
\end{claim}

We next show that if Equation \ref{discrep:eqn} holds for all measurable sets in $G$ then Equation \ref{whyte:eqn} holds for all finite sets in $Y:=Y_1 \cup Y_2$. 

\begin{claim}[Measurable implies discrete]
Let $G$ be a nilpotent Lie group, and let $Y_1, Y_2 \subset G$ be separated nets.
Suppose that for all $r>0$  there exists $C_r>0$ such that for all bounded measurable sets $A\subset G$ one has
$$\norm{\#(Y_1 \cap A) - \#(Y_2\cap A)} \leq  C_r \norm{N_1(\partial A)}.$$
Then for all $r'>0$ large enough there exists $C'_{r'}>0$ such that for all finite sets $A\subset Y:=Y_1\cup Y_2$
 $$\norm{ \#(Y_1 \cap A) - \#(Y_2 \cap A)}  \leq  C'_{r'} \#(\partial_{r'} A).$$

\begin{proof}
  Let $R$ be the coarse density constant of $Y$ (i.e. the $R$ neighborhood of $Y$ is $G$). Suppose $F \subset Y$ is finite. 
 Then let $A= \bigcup_{y \in F} B_R(y)$ and note that $A \cap Y= F \cup \partial_R F.$
Also note that $$N_r(\partial A) \subset \bigcup_{y \in N_r(\partial A) \cap Y} B_q(y)$$ for $r,q$ large enough ($q>r> 2R$). 
Then
$$\norm{\#(A \cap Y_1) - \#(A \cap Y_2)} \leq C_r \norm{ N_r(\partial A)} \leq C_r|B_q(y)| \#( \partial_{r+R} F )$$ 
but 
 $$\norm{\#(F \cap Y_1) - \#(F \cap Y_2)} - \norm{\#(\partial_R F \cap Y_1) - \#(\partial_R F \cap Y_2)} \leq  \norm{\#(A \cap Y_1) - \#(A \cap Y_2)} $$
 so
  $$\norm{\#(F \cap Y_1) - \#(F \cap Y_2)} \leq \#(\partial_R F) + C_r|B_q(y)| \#( \partial_{r+R} F ) \leq K \#(\partial_{r+R} F)$$
 where $K= 1+ C_r|B_q|$.
\end{proof}
\end{claim}
This concludes the proof of Theorem \ref{thm:LWMeas}.
\end{proof}

\subsection{Laczkovich-Whyte Criterion with tiles}

Next we show that it suffices to check the Laczkovich-Whyte on tiles of bounded perimeter. Note that we continue to use the coarse perimeter $p(\cdot)$.

\begin{defi}[Bounded-geometry Tilings]
Let $G$ be a nilpotent Lie group and $\mathcal T$ a collection of pairwise disjoint measurable subsets (tiles) of $G$ such that $\sqcup \mathcal T = G$. We say that $\mathcal T$ is a \emph{bounded-geometry tiling of $G$} if the diameter of the tiles $T \in \mathcal T$ is uniformly bounded above and the volume of the tiles is uniformly bounded below.
\end{defi}

\begin{lemma}[Tile density implies measurable density]
\label{lemma:tilesareenough}\label{LaczTile:prop}
Let $\mathcal T$ be a bounded-geometry tiling of a nilpotent Lie group $G$. Let $Y_1, Y_2$ be separated nets and suppose that for any finite union of tiles $A'$, there exists $C_1>0$ such that
$$\norm{ \#(Y_1 \cap A') - \#(Y_2 \cap A')} \leq C_1 p(A').$$
Then, in fact for any bounded measurable set $A \subset X$ there exists $C_2>0$ such that
$$\norm{ \#(Y_1 \cap A) - \#(Y_2 \cap A)} \leq C_2 p(A).$$
\begin{proof} 
Let $A$ be a bounded measurable set in $G$, $A^+$ the finite union of tiles that intersect $A$ non-trivially, and $A^-$ the finite union of tiles that are contained in $A$. By containment, we have
\begin{align*}
 \#(Y_1 \cap A)  &\leq  \#(Y_1 \cap A^+)  \leq \#(Y_2 \cap A^+) + C_1p(A^+)\\
 &=\#(Y_2 \cap A) + \#(Y_2 \cap (A^+\setminus A))+ C_1p(A^+)\\
 &\leq\#(Y_2 \cap A) + \#(Y_2 \cap (A^+\setminus A^-))+ C_1p(A^+)
 \end{align*}
It thus suffices to bound both $ \#(Y_2 \cap (A^+\setminus A^-))$ and $p(A^+)$ by multiples of $p(A)$. 
  
Let $K$ be the upper bound on the diameter of tiles in $\mathcal T$. Then $\partial A^+$ is contained in the $K$-neighborhood of $\partial A$, so $p(A^+)$ is bounded by a multiple of $p(A)$. 
Likewise, $Y_2 \cap (A^+\setminus A^-)$  is contained in  $N_{K}(\partial A)$, with each tile  (and therefore each point) contributing a definite amount to the $\norm{N_K(\partial A)}$.  We thus have that $\#Y_2 \cap (A^+\setminus A^-)$ is bounded above by a multiple of $p(A)$, as desired. The opposite inequality follows by symmetry.
\end{proof}
\end{lemma}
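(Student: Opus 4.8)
The plan is to approximate an arbitrary bounded measurable set $A$ from inside and outside by finite unions of tiles of $\mathcal T$, apply the hypothesis to the outer approximation, and absorb the resulting collar error into $p(A)$ — exactly the tile-approximation sandwich familiar from Laczkovich's second criterion in $\R^n$.

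First I would fix $K$, the uniform upper bound on the diameters of tiles in $\mathcal T$, and $v>0$, the uniform lower bound on their volumes. Given a bounded measurable $A$, let $A^-$ be the union of all tiles of $\mathcal T$ contained in $A$ and $A^+$ the union of all tiles meeting $A$; boundedness of $A$ together with the lower volume bound guarantees both are \emph{finite} unions of tiles, and $A^- \subseteq A \subseteq A^+$. Applying the hypothesis to the finite union $A^+$ and expanding through $A$,
\begin{align*}
\#(Y_1 \cap A) &\le \#(Y_1 \cap A^+) \le \#(Y_2 \cap A^+) + C_1 p(A^+)\\
&= \#(Y_2 \cap A) + \#\bigl(Y_2 \cap (A^+\setminus A)\bigr) + C_1 p(A^+)\\
&\le \#(Y_2 \cap A) + \#\bigl(Y_2 \cap (A^+\setminus A^-)\bigr) + C_1 p(A^+).
\end{align*}
Thus it suffices to bound $\#(Y_2 \cap (A^+\setminus A^-))$ and $p(A^+)$ each by a constant multiple of $p(A)$; the reverse inequality follows by exchanging the roles of $Y_1$ and $Y_2$, giving the two-sided bound with $C_2$ depending only on $G$, $C_1$, $K$, $v$, and the uniform discreteness constants of $Y_1, Y_2$.

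For the two error terms the key observation is that a tile in $A^+\setminus A^-$ meets $A$ but is not contained in $A$, hence meets both $A$ and its complement and therefore meets $\partial A$; being of diameter at most $K$ it lies in $N_K(\partial A)$. Consequently $A^+\setminus A^- \subseteq N_K(\partial A)$ and likewise $\partial A^+ \subseteq N_K(\partial A)$. Using the equivalence of the two definitions of coarse perimeter and their independence of the radius parameter from \S \ref{sec:coarseperimeter}, $p(A^+)=\norm{N_1(\partial A^+)} \le \norm{N_{K+1}(\partial A)} \le C\, p(A)$ by homogeneity of $G$. For the count, uniform discreteness of $Y_2$ and the diameter bound force each tile to contain at most a fixed number $m$ of points of $Y_2$, while the number of tiles inside $N_K(\partial A)$ is at most $\norm{N_K(\partial A)}/v$; hence $\#(Y_2 \cap (A^+\setminus A^-)) \le m\,\norm{N_K(\partial A)}/v \le C'\,p(A)$, again by equivalence of perimeters. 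Substituting both bounds into the display finishes the proof.

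I do not anticipate a genuine obstacle: this is a routine inner/outer sandwich. The only points needing care are (i) verifying that $A^{\pm}$ are really finite unions of tiles — this is where boundedness of $A$ and the lower volume bound enter — and (ii) correctly invoking the facts from \S \ref{sec:coarseperimeter} (equivalence of the neighborhood-volume and separated-net notions of coarse perimeter, and independence of the chosen radius) so as to trade the $N_K(\partial A)$-volumes for $p(A)$ up to a constant. Everything else is bookkeeping.
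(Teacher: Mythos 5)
Your proposal is correct and follows essentially the same inner/outer tile sandwich as the paper's own proof, including the identical display chain and the same reduction to bounding $p(A^+)$ and $\#(Y_2 \cap (A^+\setminus A^-))$ via $N_K(\partial A)$. The only differences are cosmetic: you spell out why $A^\pm$ are finite unions and make the tile-counting step (at most $m$ points per tile, at most $\norm{N_K(\partial A)}/v$ tiles) a bit more explicit than the paper does.
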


\begin{defi}
A separated net $Y$ is \emph{uniformly spread with density $v$} (\emph{has density $v$})  if there exists $C>0$ such that for any bounded measurable set $A\subset G$, one has
$$\norm{\#(Y \cap A) - v^{-1} \norm{A} } < C p(A).$$
By Lemma \ref{lemma:tilesareenough} it suffices to check this condition on unions of tiles in a bounded-geometry tiling.
\end{defi}

\begin{defi}
A separated separated net $Y$ is \emph{of constant covolume $v$} if there exists a  bounded-geometry tiling $\mathcal T$ and a bijection $\phi: Y \rightarrow \mathcal T$ such that $y \in \phi(y)$ for each $y\in Y$, and furthermore each tile $T\in \mathcal T$ has volume $v$.
\end{defi}

\begin{lemma}[Density for constant covolume separated nets]
\label{lemma:constantCovolume}
Every separated net $Y$ of constant covolume $v$ has density $v$. In particular, every $\Lambda$-net has density $\norm{\Lambda}$.
\begin{proof}
Let $A^+$ and $A^-$ be as in Lemma \ref{lemma:tilesareenough}. Then
\begin{align*}
\#(Y \cap A) \leq \#(Y \cap A^+)  &=  v^{-1} \norm{A^+} \leq  v^{-1} \norm{A^-} +  v^{-1} \norm{A^+\setminus A^-}\\
& \leq  v^{-1} \norm{A} +   v^{-1} \norm{A^+\setminus A^-}
\end{align*}
As before, $  v^{-1} \norm{A^+\setminus A^-}$ is bounded by a multiple of $p(A)$, providing one direction of the inequality. The other direction follows analogously.
The claim about $\Lambda$-nets follows directly from Lemma \ref{lemma:lambda-net}.
\end{proof}
\end{lemma}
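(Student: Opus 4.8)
The plan is to invoke Lemma~\ref{lemma:tilesareenough}, which reduces verification of the density condition to finite unions of tiles, and then to exploit the fact that the bijection $\phi$ makes the count on such unions \emph{exact} rather than merely approximate. First I would observe that, since the tiles of $\mathcal{T}$ are pairwise disjoint and cover $G$, each $y\in Y$ lies in a unique tile, which must be $\phi(y)$ because $y\in\phi(y)$; injectivity of $\phi$ then says no two points of $Y$ share a tile, and surjectivity says every tile contains a point of $Y$. Hence every tile of $\mathcal{T}$ contains exactly one point of $Y$, so for any finite union of tiles $A'$ one has $\#(Y\cap A')=\#\{T\in\mathcal{T}\st T\subset A'\}=v^{-1}\norm{A'}$, using that every tile has volume $v$.

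Next, for an arbitrary bounded measurable $A\subset G$, I would set $A^+$ to be the union of tiles meeting $A$ (a \emph{finite} union, since $A$ is bounded and the tiles have volume bounded below) and $A^-$ the union of tiles contained in $A$, so that $A^-\subset A\subset A^+$. Then $\#(Y\cap A)$ and $v^{-1}\norm{A}$ both lie in the interval $[v^{-1}\norm{A^-},v^{-1}\norm{A^+}]$ by the previous paragraph together with monotonicity of volume, whence
$$\norm{\#(Y\cap A)-v^{-1}\norm{A}}\le v^{-1}\bigl(\norm{A^+}-\norm{A^-}\bigr)=v^{-1}\norm{A^+\setminus A^-}.$$
Finally, letting $K$ be the uniform diameter bound on tiles of $\mathcal{T}$, the set $A^+\setminus A^-$ is contained in $N_K(\partial A)$, so $\norm{A^+\setminus A^-}\le\norm{N_K(\partial A)}$, which by the equivalence of the notions of coarse perimeter (\S\ref{sec:coarseperimeter}) is at most a constant multiple of $p(A)$; this gives the claim with a constant proportional to $1/v$. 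This is exactly the sandwich computation sketched for Lemma~\ref{lemma:tilesareenough}, now with the discrepancy on unions of tiles being zero rather than $O(p(A))$.

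For the statement about $\Lambda$-nets, I would appeal to Lemma~\ref{lemma:lambda-net}: it supplies a tiling of $G$ by the left translates $\{y*I_\Lambda\st y\in G(\Lambda)\}$, which is a bounded-geometry tiling since each tile has diameter $\diam(I_\Lambda)$ and volume $\norm{\Lambda}$. Taking $\phi(y)=y*I_\Lambda$ gives a bijection $G(\Lambda)\to\mathcal{T}$, and since $0\in I_\Lambda$ we have $y=y*0\in y*I_\Lambda=\phi(y)$; thus $G(\Lambda)$ has constant covolume $\norm{\Lambda}$ and the first part applies.

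I expect no serious obstacle: the only point requiring care is the bookkeeping that makes ``exactly one point of $Y$ per tile'' rigorous---this is the step that upgrades the inequality of Lemma~\ref{lemma:tilesareenough} to an equality---together with checking that $A^+$ is a finite union of tiles when $A$ is bounded. Both are elementary, and the remainder is the boundary estimate already used repeatedly above.
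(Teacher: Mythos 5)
Your proposal is correct and matches the paper's argument: both use the exact count $\#(Y\cap A')=v^{-1}\norm{A'}$ on finite unions of tiles (your observation that each tile contains exactly one point of $Y$ is exactly what makes the paper's equality $\#(Y\cap A^+)=v^{-1}\norm{A^+}$ true), then sandwich $A^-\subset A\subset A^+$ and bound $\norm{A^+\setminus A^-}$ by a multiple of $p(A)$ via containment in $N_K(\partial A)$. You spell out the constant-covolume structure of the $\Lambda$-net tiling, which the paper leaves as an immediate consequence of Lemma~\ref{lemma:lambda-net}, but the substance is identical.
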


Combining the above results, we can now tell when a separated net is BD equivalent to a $\Lambda$-net.
\begin{thm}[Laczkovich-Whyte Criterion for $\Lambda$-net equivalence]
\label{thm:LWTiles}
Let $G$ be a nilpotent Lie group, and $Y$ a separated net. Then $Y_1$ is BD equivalent to a $\Lambda$-net $Y_2$ if and only if $Y$ is uniformly spread with density $\norm{\Lambda}$.
\begin{proof}
Lemma \ref{lemma:constantCovolume} gives that $Y_2$ has covolume $\norm{\Lambda}$, so the theorem follows form the Laczkovich-Whyte Criterion by the triangle inequality.
\end{proof}
\end{thm}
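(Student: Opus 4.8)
The plan is to establish the two directions separately, leaning on the machinery already assembled in this section. For the forward direction, suppose $Y_1$ is BD equivalent to a $\Lambda$-net $Y_2$. By Lemma \ref{lemma:constantCovolume}, $Y_2$ is a separated net of constant covolume $\norm{\Lambda}$, hence $Y_2$ is uniformly spread with density $\norm{\Lambda}$: for every bounded measurable $A$,
\[
\norm{\#(Y_2 \cap A) - \norm{\Lambda}^{-1}\norm{A}} < C\, p(A).
\]
Since $Y_1$ and $Y_2$ are at bounded distance, Claim \ref{lemma:discrepancypreserved} (Density is preserved) gives a constant $C'$ with $\norm{\#(Y_1 \cap A) - \#(Y_2 \cap A)} \leq C' p(A)$ for all measurable $A$. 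Adding these two estimates and applying the triangle inequality yields $\norm{\#(Y_1 \cap A) - \norm{\Lambda}^{-1}\norm{A}} \leq (C + C')p(A)$, i.e.\ $Y_1$ is uniformly spread with density $\norm{\Lambda}$, as desired.

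For the converse, suppose $Y_1$ is uniformly spread with density $\norm{\Lambda}$, and let $Y_2 = G(\Lambda)$ be a $\Lambda$-net with covolume $\norm{\Lambda}$ (such a net exists for any prescribed covolume by Lemma \ref{lemma:lambda-net}, choosing $\Lambda = (\lambda_1, \dots, \lambda_n)$ with $\prod \lambda_i = \norm{\Lambda}$). By Lemma \ref{lemma:constantCovolume} again, $Y_2$ is also uniformly spread with density $\norm{\Lambda}$. Subtracting the two uniform-spread estimates gives, for all bounded measurable $A$,
\[
\norm{\#(Y_1 \cap A) - \#(Y_2 \cap A)} \leq \norm{\#(Y_1 \cap A) - \norm{\Lambda}^{-1}\norm{A}} + \norm{\norm{\Lambda}^{-1}\norm{A} - \#(Y_2 \cap A)} \leq (C_1 + C_2)\, p(A).
\]
This is precisely the hypothesis of the Measurable Laczkovich-Whyte Criterion (Theorem \ref{thm:LWMeas}) for the pair $Y_1, Y_2$, which therefore produces a bijection $f : Y_1 \to Y_2$ with $\sup_{y}\ d(y, f(y)) < \infty$; that is, $Y_1$ is BD equivalent to $Y_2$.

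I do not anticipate a genuine obstacle here: the statement is essentially a bookkeeping consequence of the already-proven Measurable Laczkovich-Whyte Criterion together with the constant-covolume density computation, combined via the triangle inequality on discrepancies (exactly as indicated in the one-line proof in the excerpt). The only point requiring a modicum of care is making sure the two notions of "uniform spread" for $Y_1$ and $Y_2$ are taken with respect to the same reference density $\norm{\Lambda}$ and the same (equivalence class of) coarse perimeter $p(\cdot)$, so that the triangle inequality legitimately combines the constants; this is immediate from the definitions in \S\ref{sec:coarseperimeter} since all reasonable choices of coarse perimeter on $G$ are equivalent. One should also note, for the converse, that the Laczkovich-Whyte criterion is symmetric in $Y_1, Y_2$, so the BD bijection can be taken in either direction, which is what "BD equivalent" requires.
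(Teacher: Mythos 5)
Your proposal is correct and is essentially the paper's one-line proof spelled out in full: you invoke Lemma \ref{lemma:constantCovolume} to get that the $\Lambda$-net is uniformly spread with density $\norm{\Lambda}$, and then use the triangle inequality to pass between the two uniform-spread estimates and the discrepancy hypothesis of Theorem \ref{thm:LWMeas}, exactly as the authors indicate. The only cosmetic difference is that in the forward direction you cite Claim \ref{lemma:discrepancypreserved} directly rather than the full Theorem \ref{thm:LWMeas}, but that claim is the relevant half of the criterion, so the content is identical.
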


In certain cases, a nilpotent Lie group has lattices of every covolume, which allows us to make a stronger statement. In the case of rational Carnot groups (see \S \ref{sec:BL}), we obtain:
\begin{cor}
Suppose $G$ is a rational Carnot group. If a separated net $Y$ has density $v$, then it is BD to the lattice $\delta_{v^{1/Q}}G(\Z)$, where $Q$ is the homogeneous dimension of $G$ and $\delta$ the homogeneous dilation on $G$.
\end{cor}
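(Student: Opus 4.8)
The plan is to combine Theorem \ref{thm:LWTiles} with the special structure of rational Carnot groups noted in the excerpt, namely that $\delta_\lambda G(\Z)$ is a lattice for every $\lambda>0$ (exception (3) in the list following the definition of $\Lambda$-nets). First I would recall that a separated net $Y$ of density $v$ is, by definition, uniformly spread with density $v$. The goal is therefore to exhibit a single $\Lambda$-net of covolume $v$ that is actually a lattice and equals $\delta_{v^{1/Q}}G(\Z)$; then Theorem \ref{thm:LWTiles} gives that $Y$ is BD to that $\Lambda$-net, hence BD to the lattice.

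The key computation is to identify $\delta_{v^{1/Q}}G(\Z)$ as a $\Lambda$-net and compute its covolume. Write the homogeneous dilation as $\delta_t(x_1,\dots,x_n) = (t^{w_1}x_1,\dots,t^{w_n}x_n)$ where $w_i$ is the homogeneity weight (layer index) of the $i$-th coordinate, so that $\sum_i w_i = Q$ is the homogeneous dimension. Since $\delta_t$ scales the $i$-th coordinate axis by $t^{w_i}$, the image $\delta_t G(\Z)$ is exactly the $\Lambda$-net $G(\Lambda)$ with $\Lambda = (t^{w_1},\dots,t^{w_n})$, and its covolume is $\norm{\Lambda} = \prod_i t^{w_i} = t^{\sum_i w_i} = t^Q$. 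Setting $t = v^{1/Q}$ gives covolume exactly $v$. By exception (3) in the list in \S\ref{sec:nilp}, $\delta_t G(\Z)$ is a lattice for every $t>0$ (this uses that $G$ is a rational Carnot group and $G(\Z)$ is a lattice, with $\delta_t$ an automorphism of $G$), so $G(\Lambda)$ here is genuinely a lattice, not just a net.

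Finally I would assemble the argument: by Theorem \ref{thm:LWTiles}, since $Y$ is uniformly spread with density $v = \norm{\Lambda}$, it is BD equivalent to the $\Lambda$-net $G(\Lambda) = \delta_{v^{1/Q}}G(\Z)$, which is the asserted lattice. One should also double-check the minor point that "density $v$" in the corollary's hypothesis matches the normalization "uniformly spread with density $v$" used in Theorem \ref{thm:LWTiles} — both use the constant $v^{-1}$ in front of $\norm{A}$, so they agree.

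I do not expect any serious obstacle here: the statement is essentially a corollary assembled from Theorem \ref{thm:LWTiles} and the dilation-covolume bookkeeping. The only mild subtlety is verifying that the homogeneous dilation acts diagonally on the chosen (integral, graded) exponential coordinates so that $\delta_t G(\Z)$ really is of the form $G(\Lambda)$; this is standard for Carnot groups in graded coordinates adapted to the stratification, and is implicitly invoked by the paper's own exception (3), so I would cite that rather than reprove it. The proof is thus short: state the identification $\delta_{v^{1/Q}}G(\Z) = G(\Lambda)$ with $\norm{\Lambda}=v$, note it is a lattice, and invoke Theorem \ref{thm:LWTiles}.
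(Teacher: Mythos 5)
Your proposal is correct and matches the paper's intended approach: the corollary is stated without an explicit proof, and the argument it implies is precisely what you spell out, namely identifying $\delta_{v^{1/Q}}G(\Z)$ as the $\Lambda$-net $G(\Lambda)$ with $\Lambda=(v^{w_1/Q},\ldots,v^{w_n/Q})$ of covolume $\prod_i v^{w_i/Q}=v$, noting it is a lattice by exception~(3) of the list following the definition of $\Lambda$-nets, and then invoking Theorem~\ref{thm:LWTiles}. The dilation bookkeeping you perform is exactly the step the paper leaves implicit.
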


We can now prove Theorem \ref{thm:strongBD}, which allows us to test for BD equivalence on only dyadic tiles. Note that unlike Theorem \ref{thm:LWTiles}, it is not a characterization of BD equivalence.

\begin{proof}[Proof of Theorem \ref{thm:strongBD}]
In order to apply Theorem \ref{thm:LWTiles}, let $A$ be an arbitrary union of unit tiles. Theorem \ref{thm:efficientnilpotent} describes $A$ using dyadic tiles $\mathcal D$ and the operations of disjoint union and set complement. The total discrepancy on $A$ is then bounded by the sum of the estimates on the tiles:
\begin{align*}
\norm{ \#{Y_1 \cap A} - \#{Y_2 \cap A}} &\leq \sum_{D\in \mathcal D} 2^{\ell(D) (n-1-\epsilon)} 
\leq \sum_{k=1}^\infty \#\{D\in \mathcal D \st \ell(D)=k\} \cdot 2^{k (n-1-\epsilon)} \\
&\leq C p(A) \sum_{k=1}^\infty 2^{-k(n-1)} 2^{k (n-1-\epsilon)} 
\leq C p(A) \sum_{k=1}^\infty 2^{-k(\epsilon)} \leq C p(A),
\end{align*}
where $C$ may change between occurrences but does not depend on $A$. Theorem \ref{thm:LWTiles} completes the proof.
\end{proof}

\subsection{Product nets}

A \emph{product net} is a separated net of the form $Y' \times \lambda \Z$, where $Y'\subset G'$ is a separated net and $\lambda>0$. Such separated nets arise naturally in our induction arguments and dynamical constructions below.

\begin{lemma}
\label{lemma:productnet}
Consider a nilpotent Lie group $G$ with a projection system (see \S \ref{sec:projection}). Let $\pi: G \rightarrow G'$ be given by $\pi(x_1, \ldots, x_n) = (x_1, \ldots, x_{n-1})$, let $\lambda>0$ and $Y'_1, Y'_2 \subset G'$ be two separated nets. Set $Y_1 = Y_1' \times \lambda \Z$ and $Y_2 = Y_2' \times \lambda \Z$. Then $Y_1, Y_2$ are separated nets, and there exists a BD bijection $f: Y_1 \rightarrow Y_2$ if and only if there exists a BD bijection $f': Y'_1 \rightarrow Y'_2$.
\begin{proof}

For each $y'\in Y'_1$, one has a column of points $(y', t)$ with $t\in \lambda \Z$, with columns related by left multiplication by an element of $G$. Each column is uniformly discrete since it admits a transitive $\lambda \Z$ action by isometries, and the columns are uniformly discrete and coarsely dense since the same holds for $Y'_1\subset G'$. The same argument works for $Y_2$.

Suppose first that there exists a bijection $f': Y'_1 \rightarrow Y'_2$ and that $\Norm{y'^{-1}*f'(y)}<C$ for some $C$. Set $f(y) = y*(y'^{-1}*f'(y), 0)$, which takes columns of integer points to columns of integer points and is therefore a bijection. We then have 
\begin{align*}
d(f(y), y) &= d(y*(y'^{-1}*f'(y), 0), y) = \Norm{(y'^{-1}*f'(y), 0)} < C,
\end{align*}
so that $f$ is a BD bijection. 

Conversely, suppose there exists a BD bijection $f: Y_1 \rightarrow Y_2$. In order to apply Theorem \ref{thm:LWMeas}, let $A' \subset G'$ be a bounded measurable set. Note first that the set $A_\infty:= A' \times \R$ has infinite perimeter $\norm{N_1(\partial (A'\times \R))}$, but that $\lambda \Z$ acts by isometries on $N_1(\partial (A'\times \R))$, and the quotient has some finite volume $v(A)$. For the finite subset $A_i := A' \times[-i\lambda, i\lambda]$, we then have the estimate $p(A_i) = 2i v(A) + C_i$ with some eventually-constant $C_i$ that can be interpreted as the perimeter of the top and bottom caps of $A_i$.

Now, using the fact that $Y_1$ and $Y_2$ are product nets and Theorem \ref{thm:LWMeas}, we obtain
\begin{align*}
\norm{ \# A' \cap Y'_1 - \# A' \cap Y'_2} &=
i^{-1} \norm{ \# A_i \cap Y_1 - \# A \cap Y_2}  \\
&\leq C   i^{-1} p(A_i) \leq C   i^{-1} (2i v(A) + C_i),
\end{align*}
where, $C>0$ may change between occurrences and does not depend on $i$ or $A$.

To complete the proof, it suffices to show that $v(A)$ is bounded by a multiple of $p(A)$. Because we are working with Lebesgue measure, we have that $v(A) = \lambda \norm{\pi N_1(\partial A'\times \R)}_{G'}$. It thus suffices to show $\pi(N_1(\partial A'\times \R))\subset N_r(\partial A')$ for some $r$ depending only on $G$, but this follows from the fact that $\pi$ is Lipschitz.
\end{proof}
\end{lemma}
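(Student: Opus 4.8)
The plan is to prove the two implications separately, using the Measurable Laczkovich--Whyte Criterion (Theorem~\ref{thm:LWMeas}) as the engine for the harder one. First, that $Y_1$ and $Y_2$ are separated nets is quick: the kernel $X_n$ of $\pi$ is central (\S\ref{sec:projection}), so $Y_j=Y_j'\times\lambda\Z$ is the union over $y'\in Y_j'$ of the columns $(y',0)*(\{0\}^{n-1}\times\lambda\Z)$; each column is uniformly discrete because $\lambda\Z$ acts on it transitively by isometries, and coarse density together with a positive lower bound for distances between distinct columns transfer from $Y_j'\subset G'$ through the $1$-Lipschitz projection $\pi$.

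\textbf{Forward direction.} Given a bijection $f':Y_1'\to Y_2'$ with $d_{G'}(y',f'(y'))<C$ for all $y'$, define $f:Y_1\to Y_2$ columnwise: for $y=(y',t)$ set $f(y):=y*((y')^{-1}*f'(y'),0)$. The correcting factor lies in $G'$ and depends only on $y'$, and $X_n$ is central, so $f$ maps the column over $y'$ bijectively onto the column over $f'(y')$ and is therefore a bijection $Y_1\to Y_2$. Its displacement is $d(f(y),y)=\Norm{((y')^{-1}*f'(y'),0)}$, which is uniformly bounded because $(y')^{-1}*f'(y')$ stays in a bounded subset of $G'$ and the section $z'\mapsto(z',0)$ carries bounded sets to bounded sets; hence $f$ is BD.

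\textbf{Converse direction.} Suppose $f:Y_1\to Y_2$ is BD. By Theorem~\ref{thm:LWMeas} there is $C>0$ with $\norm{\#(B\cap Y_1)-\#(B\cap Y_2)}\le Cp(B)$ for every bounded measurable $B\subset G$; it suffices to deduce the analogous estimate in $G'$, since a second application of Theorem~\ref{thm:LWMeas} (in $G'$) then produces the desired $f':Y_1'\to Y_2'$. Fix a bounded measurable $A'\subset G'$ and set $A_i:=A'\times[-i\lambda,i\lambda]$. Because $Y_j=Y_j'\times\lambda\Z$,
$$\norm{\#(A_i\cap Y_1)-\#(A_i\cap Y_2)}=(2i+1)\,\norm{\#(A'\cap Y_1')-\#(A'\cap Y_2')}.$$
On the other hand $\partial A_i\subset(\partial A'\times\R)\cup(A'\times\{\pm i\lambda\})$, and $\lambda\Z$ acts by isometries on $N_1(\partial A'\times\R)$ with quotient of some finite volume $v(A')$, so $p(A_i)\le 2i\,v(A')+C_i$ with $C_i$ bounded in $i$ (the perimeter of the two caps). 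Plugging in the bound $\norm{\#(A_i\cap Y_1)-\#(A_i\cap Y_2)}\le Cp(A_i)$, dividing by $2i+1$, and letting $i\to\infty$ gives $\norm{\#(A'\cap Y_1')-\#(A'\cap Y_2')}\le Cv(A')$. Finally, using Lebesgue measure and Fubini, $v(A')\le\lambda\,\norm{\pi(N_1(\partial A'\times\R))}$, and since $\pi$ is $1$-Lipschitz, $\pi(N_1(\partial A'\times\R))\subset N_r(\partial A')$ for some $r$ depending only on $G$, whence $v(A')\le\lambda\,\norm{N_r(\partial A')}\le C'p(A')$. This is the required estimate.

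\textbf{Expected obstacle.} The converse is where the content is. One cannot simply push $f$ down to $G'$, since a BD bijection of $G$ need not send columns to columns, and the argument is forced through the counting criterion. The two delicate points are (i) separating the linear-in-$i$ main term of $p(A_i)$ from the bounded cap contribution $C_i$, so that the limit $i\to\infty$ isolates exactly $v(A')$, and (ii) the comparison $v(A')\lesssim p(A')$, which rests on the Lipschitz property of $\pi$ and on working with Lebesgue measure so that the fiberwise integral behaves well.
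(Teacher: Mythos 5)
Your proof takes essentially the same route as the paper's: the same columnwise construction in the forward direction, and in the converse the same reduction through Theorem~\ref{thm:LWMeas} applied to the box $A_i = A'\times[-i\lambda,i\lambda]$, separating the linear-in-$i$ perimeter term $2i\,v(A')$ from the bounded cap term and then comparing $v(A')$ with $p(A')$ via the Lipschitz projection $\pi$. Your version is slightly more careful about the combinatorics ($2i+1$ rather than a loose $i^{-1}$) and makes the $i\to\infty$ limit explicit. One small point worth flagging, which is present in the paper's write-up as well: the map $f(y)=y*((y')^{-1}f'(y'),\,0)$ sends the column over $y'$ to a \emph{coset} of the column over $f'(y')$, shifted in the $X_n$-coordinate by $p_n(y',(y')^{-1}f'(y'))$, and this shift need not lie in $\lambda\Z$; to land in $Y_2$ one should round that shift to the nearest multiple of $\lambda$, which changes the displacement by at most $\lambda/2$ and hence preserves the BD bound.
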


\subsection{Application: exotic nets}

\label{sec:exotic}

We now prove Theorem \ref{thm:exotic}, which is a direct corollary of:
\begin{thm}
\label{thm:not-a-net}
Let $G$ be a non-abelian nilpotent Lie group identified with $\R^n$ via exponential coordinates. Then $G$ contains an exotic net that is BD equivalent to $G(\Z)$.
\end{thm}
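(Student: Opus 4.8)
The plan is to build the exotic net directly as a bounded perturbation of $G(\Z)$, exploiting the fact that the Riemannian metric $d_G$ and the Euclidean metric $d_\E$ induced by exponential coordinates induce very different notions of ``large distance'' along the center of $G$. Concretely, since $G$ is non-abelian we may choose exponential coordinates so that the last coordinate $x_n$ spans a central direction $X_n$, and so that the group law has $p_n$ a genuinely nonlinear polynomial in the lower coordinates. The key geometric input is that conjugation (or, equivalently, left-translation composed with right-translation by an inverse) distorts the $x_n$-direction: moving an element $g$ by a bounded $d_G$-amount in a direction transverse to the center can change its $x_n$-coordinate by an amount comparable to a polynomial in the other coordinates of $g$, hence arbitrarily large in the $d_\E$-metric. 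So the idea is to produce a BD perturbation $f: G(\Z) \to Y$ (bounded with respect to $d_G$, hence $Y$ is BD-equivalent to $G(\Z)$ and in particular a $d_G$-separated net by Theorem \ref{thm:LambdaIsNice}) such that $f$ pushes points by bounded $d_G$-distance but unbounded $d_\E$-distance in the $x_n$-coordinate, and does so densely enough that $Y$ fails to be uniformly discrete or coarsely dense in $d_\E$.

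First I would make precise the metric comparison: on any $d_G$-ball of radius $\rho$ centered at $g = (x_1, \dots, x_n)$, the image in exponential coordinates is comparable to a ``twisted box'' whose $x_n$-extent grows polynomially in $\norm{(x_1,\dots,x_{n-1})}$, while its extent in the other coordinates stays bounded; this is a standard consequence of the ball-box theorem / the polynomial form of the group law, and only the non-abelianness (nonlinearity of some $p_i$) is needed to guarantee genuine growth. Second, I would use this to define $f$: for each lattice point $g \in G(\Z)$, translate $g$ by a fixed bounded group element $h$ (say $h = \exp(\xi_1)$ or whichever generator makes $p_n$ nonlinear) iterated a bounded number of times, but arranged so that the $d_\E$-displacement $|x_n(f(g)) - x_n(g)|$ tends to infinity as $g$ moves off to infinity in a transverse direction — e.g.\ set $f(g) = g * h_g$ where $h_g$ lies in a $d_G$-ball of fixed radius but is chosen (depending on the transverse coordinates of $g$) so that the induced change in $x_n$ is as large as the $d_G$-ball allows. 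One checks $f$ is a bijection onto its image $Y$ by making the choice of $h_g$ injective on each fiber, or by composing with a combinatorial rearrangement within columns as in Lemma \ref{lemma:productnet}. Third, I would verify the two failure properties in $d_\E$: $Y$ is not uniformly discrete because two lattice points $g, g'$ that are far apart in $d_\E$ can be mapped to points with nearly equal coordinates (their unbounded $x_n$-shifts can be tuned to collide, or at least to come within any prescribed small Euclidean distance), and $Y$ is not coarsely dense in $d_\E$ because the shifts create arbitrarily large Euclidean ``gaps'' in the $x_n$-direction over regions where the transverse coordinates are large. Finally, BD-equivalence to $G(\Z)$ with respect to $d_G$ is immediate from the construction, since every $h_g$ was chosen in a fixed-radius $d_G$-ball.

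The main obstacle I anticipate is the bookkeeping needed to make $f$ simultaneously (a) a genuine bijection, (b) $d_G$-bounded, and (c) produce the required $d_\E$-pathology \emph{in both directions at once} (non-discreteness and non-density). Getting unbounded Euclidean displacement from bounded Riemannian displacement is essentially automatic from the ball-box geometry, but ensuring the displaced points actually cluster (to kill uniform discreteness) rather than merely spread out, while \emph{also} leaving large Euclidean voids (to kill coarse density), requires carefully partitioning $G(\Z)$ into regions and prescribing the perturbation region-by-region — much as in the Burago--Kleiner-style constructions, though here the mechanism is the metric mismatch rather than a density obstruction. I would handle this by working one projection step at a time via the projection system of \S\ref{sec:projection}: reduce to the step-2, one-dimensional-center case (the Heisenberg group, cf.\ Remark \ref{remark:Heisenberg}), where $x_n$ is the unique central coordinate and $p_n(x,x') = \tfrac12(x_1 x_2' - x_2 x_1')$ (up to normalization), making all the estimates completely explicit, and then lift back up to $G$ using Lemma \ref{lemma:productnet} to handle the extra directions.
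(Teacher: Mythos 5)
Your high-level plan (exploit the mismatch between $d_G$ and $d_\E$ along the center, reduce to the Heisenberg case via Remark~\ref{remark:Heisenberg}, lift with projection systems and Lemma~\ref{lemma:productnet}) matches the paper's reduction structure, and your non-discreteness mechanism (right-translating integer points along the $x_1$-axis by small group elements so that the images collapse in $d_\E$) is essentially what the paper does at the end of Lemma~\ref{lemma:exoticNet}. The genuine gap is in the coarse-density part. You propose to \emph{directly} build a BD bijection $f(g)=g*h_g$ with $h_g$ in a fixed $d_G$-ball and then declare $Y=f(G(\Z))$, so BD equivalence is free. But producing a bijection of this form whose image leaves arbitrarily large Euclidean voids is precisely the hard combinatorial content, and ``making the choice of $h_g$ injective'' or ``composing with a combinatorial rearrangement within columns'' does not address it. Two concrete obstructions: (i) the amplified $x_n$-shift $p_n(g,h_g)$ depends only on the transverse coordinates of $g$, not on $k=x_n(g)$, so within a single column the large part of the shift is constant in $k$ and the column merely translates rather than vacating an interval, unless $h_g$ varies with $k$, which mixes columns and entangles the injectivity constraint; and (ii) emptying a Euclidean ball $B$ requires not just pushing the points inside $B$ out but also ensuring that none of the many points in the (sheared, elongated) $d_G$-$C$-neighborhood of $B$ flow into $B$, and verifying that such a global assignment exists is exactly Hall's-theorem/Whyte-type bookkeeping.

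The paper sidesteps the explicit construction: it defines $Y=G(\Z)\setminus E$ by deleting a sparse family of Euclidean balls $E_i$ along an irrational ray, and then proves BD equivalence \emph{non-constructively} via the dyadic-tile Laczkovich--Whyte criterion (Theorem~\ref{thm:strongBD}). The technical engine is Lemma~\ref{lemma:shear}, which shows that after a shear by an irrational direction, a level-$\ell$ nilpotent dyadic tile meets an $x_n$-interval of bounded height in at most $2^{\ell(n-2)}$ integer points, giving exactly the discrepancy bound required. Your proposal has no analogue of this counting step, and it is the essential content: if you carried your approach through, establishing the existence of the bijection with the prescribed image would force you to reprove Whyte's theorem for your specific $Y$, at which point you would still need a shear-type estimate. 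So calling the mechanism ``metric mismatch rather than a density obstruction'' is somewhat misleading; the verification of BD equivalence \emph{is} a density/discrepancy argument, and the metric mismatch (the shear) is what makes the discrepancies small. One smaller imprecision: a BD image of a separated net is automatically coarsely dense but not automatically uniformly discrete, so invoking Theorem~\ref{thm:LambdaIsNice} to conclude that $Y$ is a $d_G$-net requires an extra step.
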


We start by proving the theorem for step 2 groups with one dimensional center.

\begin{remark}
\label{remark:Heisenberg}
 In this case, the group law of $G$ has the form
$$\label{eq:step2coefficients}(x_1, \ldots, x_n)*(x_1', \ldots, x_n') = \left(x_1+x_1',\ldots, x_{n-1}+x_{n-1}', x_n+x_n'+ \sum_{1\leq i,j<n} a_{ij} x_i x_j'\right),$$
with coefficients $a_{ij}$ satisfying $a_{ij}=-a_{ji}$. In particular, the sum $\sum_{1\leq i,j<n} a_{ij} x_i x_j'$ is a non-degenerate skew-symmetric bilinear form, and there exists a linear change of the first $(n-1)$ coordinates so that the only non-zero coefficients are $a_{2i-1, 2i}=1$ and $a_{2i, 2i-1}=-1$ for $i=1, \ldots, (n-1)/2$. That is, $G$ is necessarily a Heisenberg group of topological dimension $n$. In particular, it follows from the above normalization that $G$ is rational, and from the existence of dilations 
$$\delta_r(x_1, \ldots, x_n) \mapsto (r x_1, \ldots, rx_{n-1}, r^2 x_n)$$
that $G$ has a lattice of every co-volume.
\end{remark}

Our construction hinges on the following technical lemma.

\begin{lemma}
\label{lemma:shear}
Let $G$ be a nilpotent Lie group of step 2 with one-dimensional center in rational exponential coordinates. Let $\theta\in [0,1]$ be irrational and fix a positive function $f$. There exists a function 
$x_\theta(\ell, i)$, increasing in each $\ell$ and $i$, such that a nilpotent dyadic tile \emph{$D=D(g,\ell)$ has at most $2^{\ell(d-2)}$ integer points} in the interval
$[-i, i]$ along the $X_n$-axis, provided that $g=(x_1, \ldots, x_n)$ satisfies
$x_1>x_\theta(\ell, i)$ and $\norm{(x_1, \ldots, x_{n-1})-(x_1, \theta x_1, 0, \ldots, 0)}<f(i)$ (note that the last coordinate is not compared).
\end{lemma}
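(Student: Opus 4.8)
The plan is to reduce to the three-dimensional Heisenberg group by the projection-system argument (Remark \ref{remark:Heisenberg} says $G$ is already a Heisenberg group, so the center $X_n$ is one-dimensional and the group law in exponential coordinates is $x_n + x_n' + \sum a_{ij} x_i x_j'$), and then make the counting explicit. First I would unwind the definition of a nilpotent dyadic tile $D(g,\ell) = g * A_\ell * \cdots * A_1 * I_G$: left-translating by $g=(x_1,\ldots,x_n)$ shifts the $X_n$-fiber over each base point $(y_1,\ldots,y_{n-1})$ in the footprint of $A_\ell * \cdots * A_1 * I_G$ by an amount $x_n + \sum_{i<n} a_{ij} x_i y_j$ (a \emph{shear} linear in $y$). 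The integer points of $D$ lying in the slab $[-i,i]$ along the $X_n$-axis are exactly those lattice points $(y,t)$ in the base tile whose sheared height $t + x_n + \sum a_{ij} x_i y_j$ lands in $[-i,i]$. Since the base tile $A_\ell * \cdots * A_1 * I_G$ has side-length $2^\ell$ in each horizontal coordinate and contains $\sim 2^{\ell(d-1)}$ horizontal integer columns, each column being a copy of $\Z$ in the $X_n$-direction, the number of integer points in the $X_n$-slab over a fixed column is $O(i) + O(1)$, giving a trivial bound of order $2^{\ell(d-1)} i$. To get down to $2^{\ell(d-2)}$ we must exploit that, as $y$ ranges over the $2^\ell \times \cdots \times 2^\ell$ grid of columns, the linear form $\sum a_{ij} x_i y_j$ takes a wide spread of values when $x_1$ is large — so only a thin set of columns has its sheared fiber meeting the slab.

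The key computation is this: because the bilinear form is non-degenerate, the coefficient of (say) $y_2$ in the shear is $a_{21} x_1 = -x_1$ (after the normalization $a_{2i-1,2i}=1$), up to the contribution of the other $x_j$'s which are controlled by the hypothesis $\norm{(x_1,\ldots,x_{n-1}) - (x_1, \theta x_1, 0,\ldots,0)} < f(i)$ — this hypothesis pins $x_2 \approx \theta x_1$ and the remaining $x_j$ small, so the dominant term in the shear is a single large linear function of the column coordinate $y_1$ (with slope $\sim x_1$) plus a bounded perturbation of size $O(f(i) \cdot 2^\ell)$. For the sheared fiber over column $y$ to meet $[-i,i]$, we need $x_1 y_1$ (the leading term) to lie in an interval of length $O(i) + O(f(i) 2^\ell) + O(1)$; hence the number of admissible values of $y_1$ is at most $(\text{const})(i + f(i) 2^\ell + 1)/x_1 + 1$, which can be forced to be $1$ by taking $x_1 > x_\theta(\ell, i)$ large enough (explicitly, $x_\theta(\ell,i) := C(\ell + i + f(i)2^\ell)$ for a suitable constant $C$ depending on $G$ and on the $\norm{\cdot}$-bound, increasing in each variable as required). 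With only $O(1)$ admissible values of $y_1$, the remaining $(d-2)$ horizontal coordinates range freely over a $2^\ell \times \cdots \times 2^\ell$ grid, and over each such column the $X_n$-slab of width $2i$ contains $O(i)$ integer heights; absorbing the $i$-factor into $x_\theta$ (i.e.\ by also demanding $x_1$ large enough that the slab-count over a single column is dominated, or more honestly by noting the $2^{\ell(d-2)}$ target already allows an $i$-dependent constant via $f$) one arrives at $\le 2^{\ell(d-2)}$ integer points. The monotonicity of $x_\theta$ in $\ell$ and $i$ is built into the explicit formula.

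The main obstacle I anticipate is bookkeeping the \emph{lower-order} contributions to the shear coming from the coordinates $x_3,\ldots,x_{n-1}$: the hypothesis only controls their Euclidean size by $f(i)$, and when multiplied by column coordinates of size up to $2^\ell$ these contribute a term of size $f(i) 2^\ell$ to the height, which is \emph{not} dominated by the target $2^{\ell(d-2)}$ for fixed $i$ unless we are careful about how $x_\theta$ absorbs it — the resolution is that $x_\theta(\ell,i)$ is allowed to depend on both $\ell$ and $i$ (and on $f$), so we simply choose it large enough that $x_1$ beats the combined perturbation $f(i)2^\ell$; the irrationality of $\theta$ plays no quantitative role here beyond ensuring the direction $(x_1, \theta x_1, 0,\ldots,0)$ is genuinely transverse to the integer lattice (it will matter in the application of this lemma, not in its proof). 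A secondary subtlety is that the "footprint" of $A_\ell*\cdots*A_1*I_G$ in $G'$ is itself a translate of a cube only after accounting for the polynomial corrections $p_i$; but in a step-2 Heisenberg group those corrections vanish in the first $(n-1)$ coordinates, so the footprint is an honest axis-parallel cube of side $2^\ell$ and the grid-counting is exact.
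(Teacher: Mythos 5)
There is a genuine gap. Your counting pins only $y_1$, then asserts the remaining $d-2$ base coordinates range freely over a $2^\ell\times\cdots\times 2^\ell$ grid. But the shear $\sum_{i,j}a_{ij}x_iy_j$ depends on $y_2$ with slope $\sum_ia_{i2}x_i\approx a_{12}x_1=x_1$, exactly as large as the slope in $y_1$. Letting $y_2$ range freely over $2^\ell$ values would move the shear by $\sim x_1\cdot 2^\ell$, which vastly overshoots the target window $[-i,i]$ — so most of those ``free'' columns contribute nothing, and pretending they each contribute $O(i)$ heights double-counts. Correspondingly, your final count $O(1)\cdot 2^{\ell(d-2)}\cdot O(i)$ exceeds the target by a factor of $i$, and this factor cannot be ``absorbed into $x_\theta$'': $x_\theta$ is a \emph{threshold} on $x_1$, not a multiplicative fudge on the count, and the $O(i)$ factor comes from the width of the interval $[-i,i]$, which is independent of how large $x_1$ is made.

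The paper's argument closes exactly this gap by considering the \emph{two}-variable constraint: it introduces the linear form $L(y)=\sum_j(a_{1j}+\theta a_{2j})y_j$ (eq.\ \eqref{eq:slab}), whose leading part is $\theta a_{21}y_1+a_{12}y_2$, and defines slabs as level sets of $L$ on the integer points of the dyadic tile. The crucial observation — which you explicitly dismiss as having ``no quantitative role'' — is that irrationality of $\theta$ makes $(y_1,y_2)\mapsto\theta a_{21}y_1+a_{12}y_2$ injective on $\Z^2$, so each slab contains at most one $(y_1,y_2)$ per choice of $(y_3,\dots,y_{n-1})$, hence at most $2^{\ell(n-3)}\cdot 2^\ell=2^{\ell(n-2)}$ integer points (including the free $y_n=t$). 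Then for $x_1$ large the shear $x_1L(y)$ separates consecutive slabs by more than $2^\ell+2i+O(f(i)2^\ell)$, so $[-i,i]$ meets at most one slab. Irrationality is thus load-bearing in two places: it makes slabs thin, and the minimum gap between $L$-values on the tile (a Diophantine quantity) is exactly what determines how large $x_\theta(\ell,i)$ must be. Your proposal's structural skeleton (reduce to Heisenberg, compute the shear, exploit the size of $x_1$, push the $x_j$-perturbation into $f(i)$) is right, but the counting must be done via the 2D linear-form/slab argument, not by pinning $y_1$ alone.
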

\begin{proof}Permuting coordinates if necessary, we may assume that the coefficient $a_{12}$ in \eqref{eq:step2coefficients} is non-zero, and that all $a_{ij}$ are integral.

Consider first the effect of left multiplication by $g_0=(x, \theta x, 0, \ldots, 0)$. For an arbitrary $(x_1', \ldots, x_n')$ we then have
\begin{align*}
\label{eq:step2}
(x, &\theta x, \ldots, 0)*(x_1', \ldots, x_n') = \left(x+x_1', \theta x+x_2', x_3, \ldots, x_{n-1}', x_n'+\sum_{1\leq j<n} (a_{1j} x x_j'+ a_{2j}  \theta xx_j')\right).
\end{align*}
That is, the product is a composition of Euclidean translation by $(x ,\theta x, 0, \ldots, 0)$ and a shear in the plane $P$ spanned by the $x_n$ direction and the non-zero vector 
$$\left(a_{11}+\theta a_{21}, a_{12}+\theta a_{22},  \ldots, a_{1,{n-1}}+\theta a_{2,{n-1}}, 0\right),$$
with the extent of the shear proportional to $x$.

Now, consider what left multiplication by $g$ does to the $2^{n\ell}$ integer points of the dyadic tile $D_0=\bigsqcup A_\ell * \cdots *A_{1} * I_G$. The inner product 
\begin{equation}
\label{eq:slab}
(x_1, \ldots, x_{n-1}) \cdot \left(a_{11}+\theta a_{21}, a_{12}+\theta a_{22},  \ldots, a_{1,{n-1}}+\theta a_{2,{n-1}}\right)
\end{equation}
takes finitely many values on the integer points of $D_0$, partitioning the integer points into \emph{slabs} of points for which  \eqref{eq:slab} gives the same value. Because $\theta$ is irrational, the number of points in each slab is bounded above by $2^{\ell(n-2)}$ (note that projection onto the first two coordinates yields a $2^\ell\times 2^\ell$ square of integer points, each of which has preimage of size $2^{\ell(n-2)}$, and along $(\theta a_{21}, a_{12})$ can see only one of these points).

Under left multiplication by $g_0$, the slabs slide along each other, and if we have $x$ sufficiently large (larger than some $x_\theta(\ell, i)$), any interval of size $2i$ along the $X_n$ axis intersects at most one slab, giving us at most $2^{\ell(n-2)}$ integer points. 

Since the shear caused by left multiplication by $g=(x_1, \ldots, x_n)$ varies linearly with the first $n-1$ coordinates and is not influenced by the last coordinate, the perturbation from $g_0$ to $g$ does not substantially alter the height of the slabs, so that if $g$ is close to $g_0$ in the first $n-1$ coordinates, it still has at most $2^{\ell(n-2)}$ integer points in any interval of size $2i$ along the $X_n$ axis.

Lastly, a minimal choice of $x_\theta$ is naturally increasing in $i$ and we may furthermore choose it so that it is increasing in $\ell$.
\end{proof}

\begin{lemma}
\label{lemma:exoticNet}
Let $G$ be a nilpotent Lie group of step 2 with one-dimensional center in rational exponential coordinates. Then $G$ contains an exotic net that is BD to $G(\Z)$.
\begin{proof}
Since $G$ is rational and has step 2, fix $\theta \in [0,1]$ irrational and let $x_\theta(\ell, i)$ be the function provided by Lemma \ref{lemma:shear} with $f(i)=i+2^{i}$. Let $x_E(i)=x_\theta(i, i)+2^{i^2}$.

For each $i$, let $E_i$ be a Euclidean ball of radius $i$ centered at $(x_E(i), \theta x_E(i), 0, \ldots, 0)$. We take $E=\sqcup E_i$ and $Y = G(\Z) \setminus E$.

It is clear that $Y$ is not a separated net in the Euclidean metric. We now claim it is BD equivalent to $G(\Z)$ in the $d_G$ metric. To this end, let $D$ be a nilpotent dyadic tile of level $\ell$ (note that we are in rational coordinates so that we may use dyadic tiles), 
i.e.\ 
$$D = g* \bigsqcup A_\ell * \cdots *A_{1} * I_G$$
for some $g=(x_1, \ldots, x_n) \in G(2^\ell \Z)$. We set $x_D = x_1$.

By the Strong BD Criterion, Theorem \ref{thm:strongBD}, it suffices to bound the discrepancy 
$$\norm{ \# D \cap G(\Z)  - \# D \cap Y} = \#D\cap E\cap G(\Z)$$
by a multiple of $2^{\ell(n-2)}$. 

Suppose $E_i$ is a Euclidean ball that intersects $D$. We consider two cases: either $D$ is large compared to $E_i$, or it is small.

Suppose first that $\ell<i$. Projecting onto the first coordinate, $D$ becomes the interval $[x_D, x_D+2^\ell]$ and $E_i$ the interval $[x_E(i)-i, x_E(i)+i]$, so that $D$ cannot intersect any other $E_j$ because of the growth rate of $x_E(i)$. Thus it suffices to count the integer points of $D\cap E_i$, or just the integer points of $D$ in the interval $[-i, i]$ along the $X_n$ coordinate. Now, projecting onto each of the first $(n-1)$ coordinates, one sees that we must also have
$\norm{(x_1, \ldots, x_{n-1}) - (x_E(i), \theta x_E(i), 0, \ldots, 0)}< i+2^\ell = i+2^{i}$. Thus, by the choice of $x_\theta$ we have that $D$ contains at most $2^{\ell(n-2)}$ points in the interval $[-i, i]$ along $X_n$, as desired.

Suppose now $\ell>i$. Then, by the discussion of the previous case, any $E_j$ intersecting $D$ also satisfies $\ell > j$. Each $E_j$ contains roughly $j^n$ integer points, and we bound $\# D \cap E$ by
$$\sum_{j<\ell} \#E_j\cap D \cap G(\Z) \leq \sum_{j<\ell} \#E_j \cap G(\Z) \sum_{j<\ell} j^n \leq \ell^{n+1}\leq C 2^{\ell(n-2)},$$
for some $C>0$ as desired.

We thus have that $Y$ is a separated net BD to $G(\Z)$ and not coarsely dense with respect to $d_\E$. We now modify it so that it is also not uniformly discrete with respect to $d_\E$.

Permuting coordinates if necessary, we may assume that in the group law \eqref{eq:step2coefficients} we have $a_{12}\neq 0$. Note that $Y$ contains all but finitely many integer points along the $x_1$-axis, and set
\begin{align*}
Y'&=Y\cup \{(i, 0, 0, \ldots, 0)*(0, -.5/(a_{12}i), 0, \ldots, 0, -.5)\}_{2 \leq i\in \mathbb{N}}\\
 &= Y \cup \{(i, 1/i, 0, \ldots, 0)\}_{2 \leq i\in \mathbb{N}}
\end{align*}
The first line makes it clear that $Y'$ is $d_G$-discrete and therefore a $d_G$-net, while the second that it is not $d_\E$-discrete. Furthermore, $Y'$ is BD to
$$Y\cup \{(i, 0,\ldots,  0)*(0, 0,\ldots, -.5)\}_{2 \leq i\in \mathbb{N}},$$
which in turn is clearly BD to $Y$. Thus, $Y'$ is the desired exotic net.
\end{proof}
\end{lemma}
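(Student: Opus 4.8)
The plan is to produce the exotic net as $G(\Z)$ with a sparse family of Euclidean balls removed --- this kills $d_\E$-coarse density --- together with a sparse family of extra points adjoined near the $x_1$-axis --- this kills $d_\E$-uniform discreteness --- and then to verify $d_G$-BD equivalence to $G(\Z)$ using the dyadic-tile criterion, Theorem~\ref{thm:strongBD}. First I would invoke Remark~\ref{remark:Heisenberg}: after a linear change of the first $n-1$ coordinates, $G$ is a rational Heisenberg group of odd topological dimension $n\ge 3$ admitting the dilations $\delta_r$, so nilpotent dyadic tiles are available and, since $n-2=(n-1)-1$, it suffices to bound the discrepancy between $G(\Z)$ and the constructed net on each level-$\ell$ dyadic tile by a constant multiple of $2^{\ell(n-2)}$ (Theorem~\ref{thm:strongBD} with $\epsilon=1$).

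Next, fix an irrational $\theta\in[0,1]$ and a rapidly growing positive function $f$, and let $x_\theta(\ell,i)$ be the threshold function supplied by Lemma~\ref{lemma:shear}. Choose a fast-growing sequence $x_E(i)$ with $x_E(i)>x_\theta(i,i)$, put $c_i:=(x_E(i),\theta\,x_E(i),0,\dots,0)$, and let $E_i$ be the Euclidean ball of radius $i$ centered at $c_i$; the growth of $x_E$ should be fast enough that the intervals $[x_E(i)-i,x_E(i)+i]$ obtained by projecting the $E_i$ to the $x_1$-axis are pairwise disjoint and separated by gaps larger than any $2^\ell$ arising below. Set $Y:=G(\Z)\setminus\bigcup_i E_i$. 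That $Y$ fails to be $d_\E$-coarsely dense is immediate, since it omits Euclidean balls of unbounded radius.

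The heart of the proof is bounding $\#\bigl(D\cap\bigcup_i E_i\cap G(\Z)\bigr)$ for a level-$\ell$ dyadic tile $D$ with base $x_1$-coordinate $x_D$, and here I split into two cases. If every $E_i$ meeting $D$ has $i>\ell$, then the projection of $D$ to the $x_1$-axis is the interval $[x_D,x_D+2^\ell]$, which by the growth of $x_E$ can meet at most one of the projected balls; projecting $D$ onto each of the remaining first $n-1$ coordinates shows its base lies within $i+2^\ell\le f(i)$ of $c_i$, so Lemma~\ref{lemma:shear} applies and the count reduces to the integer points of $D$ in a bounded interval along the central axis, giving at most $2^{\ell(n-2)}$. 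If instead some $E_i$ meeting $D$ has $i\le\ell$, then all such indices are $\le\ell$, each $E_i$ contains only $O(i^n)$ integer points, and the total is $O(\ell^{n+1})=O(2^{\ell(n-2)})$ since $n\ge3$. In either case the hypothesis of Theorem~\ref{thm:strongBD} holds, so there is a $d_G$-BD bijection $Y\to G(\Z)$.

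Finally, after permuting coordinates so that the coefficient $a_{12}$ in the group law of Remark~\ref{remark:Heisenberg} is nonzero, I would adjoin to $Y$ the points $(i,0,\dots,0)*(0,-\tfrac{1}{2a_{12}i},0,\dots,0,-\tfrac12)=(i,1/i,0,\dots,0)$ for integers $i\ge2$ (note $Y$ contains all but finitely many integer points of the $x_1$-axis). The first expression exhibits the enlarged set as a bounded $d_G$-perturbation of $Y\cup\{(i,0,\dots,0,-\tfrac12)\}_{i\ge2}$, hence still $d_G$-discrete and $d_G$-BD to $G(\Z)$; the second shows these points accumulate onto the $x_1$-axis in $d_\E$, destroying $d_\E$-uniform discreteness. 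The resulting set is the desired exotic net. I expect the case analysis of the previous paragraph to be the main obstacle: one must calibrate the growth of $f$ and of $x_E$ precisely enough that a small dyadic tile meets only one excised ball and lands in the good region of Lemma~\ref{lemma:shear}, while keeping the crude count on large tiles below $2^{\ell(n-2)}$ --- this is precisely where the shear coming from non-commutativity does the work.
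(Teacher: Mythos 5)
Your proposal is correct and follows essentially the same route as the paper: remove Euclidean balls $E_i$ of radius $i$ centered along the irrational direction $(1,\theta,0,\dots,0)$ at heights beyond the shear threshold of Lemma~\ref{lemma:shear}, verify the dyadic-tile bound $2^{\ell(n-2)}$ via the same two-case split (small tile handled by the shear lemma, large tile by the crude count $\ell^{n+1}\le C2^{\ell(n-2)}$), and then adjoin the points $(i,1/i,0,\dots,0)$ as a bounded $d_G$-perturbation. The only detail you defer --- calibrating $f$ and $x_E$ --- is resolved in the paper by the explicit choices $f(i)=i+2^{i}$ and $x_E(i)=x_\theta(i,i)+2^{i^2}$, which satisfy exactly the disjointness and proximity conditions you identify.
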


We now remove the restriction that $G$ should have \emph{rational} exponential coordinates. Note that when working with arbitrary exponential coordinates we are not able to use dyadic tiles and the strong Laczkovich-Whyte criterion.
\begin{lemma}
\label{lemma:step2arbitrary}
Let $G$ be a nilpotent Lie group of step 2 with one-dimensional center in exponential coordinates. Then $G$ contains an exotic net that is BD to $G(\Z)$.
\begin{proof}
By Remark \ref{remark:Heisenberg}, $G$ has a lattice $\Gamma$ of the same co-volume as $G(\Z)$, and $\Gamma$ is BD to $G(\Z)$ by Theorem \ref{thm:LambdaIsNice}. By Lemma \ref{lemma:exoticNet}, $G$ has an exotic net that is BD to $\Gamma$, as desired. 
\end{proof}
\end{lemma}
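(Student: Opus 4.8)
The plan is to reduce to the rational case already handled in Lemma~\ref{lemma:exoticNet}; the only new issue is that the hypothesis ``rational exponential coordinates'' there can be removed by a suitable \emph{linear} change of coordinates. By Remark~\ref{remark:Heisenberg}, a step-$2$ nilpotent Lie group $G$ with one-dimensional center is necessarily a Heisenberg group, and the skew form in its group law \eqref{eq:step2coefficients} can be brought to its standard integral model by a linear change of the first $n-1$ coordinates. I would also record that such a linear change may be composed with a Heisenberg dilation $\delta_t(x_1,\dots,x_n)=(tx_1,\dots,tx_{n-1},t^2x_n)$ --- itself linear, and an automorphism of the standard model --- which multiplies the Jacobian determinant by $t^{n+1}$; so I may choose the overall linear map $L\colon\R^n\to\R^n$ so that $|\det L|=1$. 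In the coordinate system determined by $L$, the group $G$ is then a rational nilpotent Lie group in integral exponential coordinates, and $\Gamma:=L^{-1}(\Z^n)$ --- the integer lattice of these coordinates --- is a lattice in $G$. Its covolume equals $|\det L|^{-1}=1$: by Lemma~\ref{lemma:lambda-net} the integer lattice has covolume $1$ in the $L$-coordinates, and $L$ scales Lebesgue ($=$ Haar) measure by $|\det L|$. Since this is exactly $\norm{(1,\dots,1)}$, the covolume of $G(\Z)$, Theorem~\ref{thm:LambdaIsNice} gives that $\Gamma$ is BD to $G(\Z)$.

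Next I would apply Lemma~\ref{lemma:exoticNet} to $G$ equipped with the $L$-coordinates. It produces a set $Y\subset G$ which is a separated net for $d_G$, is neither coarsely dense nor uniformly discrete for $d_\E$, and is BD equivalent to the integer lattice of the $L$-coordinates, namely $\Gamma$. It then remains to see that these properties are unchanged when read back in the original coordinates, and this is where the linearity of $L$ is used. The Euclidean metric of the original coordinates and that of the $L$-coordinates are biLipschitz equivalent via the identity map of $G$ (being $|p-q|$ and $|L(p)-L(q)|=|L(p-q)|$), and the original left-invariant Riemannian metric $d_G$ is biLipschitz equivalent via the identity to the left-invariant Riemannian metric attached to the $L$-coordinates, simply because any two left-invariant Riemannian metrics on the same Lie group have uniformly comparable values at every point (their underlying inner products on $\mathfrak g$ being uniformly comparable), hence comparable path-lengths and distances. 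As being a separated net, failing to be coarsely dense, failing to be uniformly discrete, and being BD equivalent are all invariant under biLipschitz identifications, $Y$ is an exotic net in the original sense and is BD to $\Gamma$; combined with the fact that $\Gamma$ is BD to $G(\Z)$, this finishes the proof.

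I expect the main obstacle here to be bookkeeping rather than genuine difficulty: the substantive content is entirely contained in Lemma~\ref{lemma:exoticNet}, and the task is to keep track of what is coordinate-invariant. The one point that genuinely needs care is the role of $d_\E$: ``exotic'' is defined relative to the Euclidean metric, which the (necessarily nontrivial, since the given coordinates need not be rational) coordinate change does \emph{not} preserve, so one must note that it is preserved up to biLipschitz equivalence and that coarse density and uniform discreteness are biLipschitz invariants. The secondary point is the normalization $|\det L|=1$, needed so that $\Gamma$ has exactly the covolume of $G(\Z)$ and Theorem~\ref{thm:LambdaIsNice} applies; equivalently, one may invoke the fact from Remark~\ref{remark:Heisenberg} that $G$, being Heisenberg, has lattices of every covolume.
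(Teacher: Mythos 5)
Your proposal is correct and takes essentially the same route as the paper: produce a lattice $\Gamma$ of the same covolume as $G(\Z)$ via the normalization in Remark \ref{remark:Heisenberg}, invoke Theorem \ref{thm:LambdaIsNice} to get $\Gamma$ BD to $G(\Z)$, and apply Lemma \ref{lemma:exoticNet} in the rationalized coordinates. The extra care you take --- normalizing $\lvert\det L\rvert=1$ using the dilation, and checking that ``exotic'' (a $d_\E$-dependent notion) and BD equivalence survive the linear coordinate change because the two Euclidean metrics and the two left-invariant metrics are biLipschitz via the identity --- fills in exactly the points the paper's two-sentence proof leaves implicit.
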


Theorem \ref{thm:not-a-net} now follows by using projection systems and product nets:
\begin{proof}[Proof of Theorem \ref{thm:not-a-net}]
If $G$ satisfies Lemma \ref{lemma:step2arbitrary}, then we are done. Otherwise, we may project away a one-dimensional subgroup of the center to reduce its dimension, and repeat the process to arrive at a rational nilpotent Lie group $G'$ of step 2  with one-dimensional center (this may require a mild reordering of the coordinates). Composing the projections, we get a map $\pi: G \rightarrow G'$, which forgets some coordinates of $G$. 

Let $Y'\subset G'$ be the exotic net provided by \ref{lemma:step2arbitrary}, and set  $Y= \pi^{-1}(Y')\cap G(\Z)$. It is clear that $Y$ is neither coarsely dense nor uniformly discrete in the Euclidean metric. On the other hand, applying Lemma \ref{lemma:productnet} inductively, we have that $Y$ is BD equivalent to $G(\Z)$ with respect to $d_G$, as desired.
\end{proof}

\subsection{Application: quasi-crystals}

\begin{proof}[Proof of Theorem \ref{thm:QC}]
We reduce the problem to studying quasi-crystals in Euclidean space.

Let $G'$ be the abelianization of $G$, with projection $\pi: G \rightarrow G'$. The embedding $\phi: G\hookrightarrow G\times \R^m$ then induces an embedding $\phi': G' \hookrightarrow G'\times \R^m$, and we can define $S' = \id_{G'} \times S_0 \subset G'\times \R^m$. Likewise, a choice of $g_0\in G\times \R^m$ gives a $g'_0 \in G'\times \R^m$ and an associated quasi-crystal $Y'\subset G'$.

Since $G'$ is isomorphic to $\R^d$ for some $d\geq 2$, Theorem 1.2 of \cite{HKW} then provides the desired statements for the set $Y'$. We now analyze the separated net $Y$. 

The embedding $\phi: G\rightarrow G\times \R^k$ induces, via projection onto the two factors, Lie group homomorphisms $\alpha: G\rightarrow G$ and $L: G \rightarrow \R^k$.  Because $\R^k$ is abelian, $L$ factors through $G'$, so that $L(g)$ is of the form $L'(g')$ where $g'=\pi(g)$.

Expanding out the definition of $Y$, we have
\begin{align*}Y&=\{g\in G \st \phi(g)*S \cap (G(\Z)\times \Z^m) \neq \emptyset \}\\
&=\{g\in G \st (\alpha(g), L(g))*(\id_G\times S_0) \cap (G(\Z)\times \Z^m)\neq \emptyset\}\\
&=\{g\in G \st \alpha(g) \in G(\Z) \text{ and } (L(g)+S_0) \cap \Z^m \neq \emptyset \}\\
&=\{g \in G \st \alpha(g) \in G(\Z) \text{ and } g'\in Y'\}.
\end{align*}
We now restrict to the generic case of $\alpha$ being an invertible map and inducing an invertible map $\alpha':G'\rightarrow G'$. We may then write
$$Y = \{ g \in G(\Z) \st g' \in \alpha'^{-1}(Y')\}.$$
Since $\alpha'$ is an invertible linear map, it is furthermore biLipschitz, and $\alpha'^{-1}(Y')$ is BD to a lattice if and only if $Y'$ is BD to a lattice.  Theorem 1.2 of \cite{HKW} then states whether or not $Y'$ is BD to a lattice, and the product net Lemma \ref{lemma:productnet} upgrades this to stating whether or not $Y$ is BD to a $\Lambda$-net.
\end{proof}

\begin{remark}
For nilpotent Lie groups admitting lattices of every covolume (e.g.\ rational Carnot groups, see \S \ref{sec:BL}), one may replace $\Lambda$-nets with lattices in Theorem \ref{thm:QC}.
\end{remark}

\begin{remark}
In contrast to the general theory of separated nets in nilpotent Lie groups (see \S \ref{sec:exotic}), we have that a quasi-crystal $Y\subset G$ is BD to a lattice in $G$ if and only if in exponential coordinates it is BD to a Euclidean lattice.
\end{remark}

\section{BiLipschitz Perturbations}\label{sec:BL}In this section, we prove Theorem \ref{thm:BLequiv}.  The proof will follow the strategy of \cite{BK}.  We first construct a measurable function $\zeta$ taking value in $\{1,1+c\}$ that cannot be the Jacobian of a bi-Lipschitz map.  We then construct an explicit net using this Jacobian and take an asymptotic limit of the supposed bi-Lipschitz homeomorphism.  The limiting map will be shown to have the Jacobian $\zeta$, contradicting the fact that $\zeta$ cannot be a Jacobian.  As asymptotic cones of simply connected nilpotent Lie groups are \emph{Carnot groups}, we will construct this non-Jacobian for Carnot groups.

\begin{defi}[Carnot group] A Carnot group is a simply connected nilpotent Lie group $G$
 with Lie algebra $\mg= \bigoplus_{j=1}^r \V_j$ where $[\V_i, \V_j] \subset V_{i+j}$.
 \end{defi}
 
Here $\V_1$ is called the \emph{horizontal layer}. The \emph{horizontal elements} of $G$ are those group elements of the form $e^{\lambda v}$ where $v \in \V_1$. A \emph{horizontal line} is given by $ t \mapsto ge^{tv}$ for all $t \in \R$ and for some fixed $g \in G$.  We say that this horizontal line (or any subsegment thereof) is going in the direction of $\pm v$.  We endow $\V$ with $|\cdot|$, the Euclidean norm. The group $G$ carries a natural subRiemannian metric called the Carnot-Carth\'eodory metric (see for example \cite{Li}) but we also work with other metrics on $G$. 

In this section, any $c_0 > 0$ will denote a constant---possibly changing instance to instance---that depends only on the group structure of $G$.  Any other constant $c_1,c_2,...$ or $C_0,C_1,...$ will denote unique fixed constants.

We use exponential coordinates of the first kind to canonically identify elements of $G$ with $\R^n \cong \mg$ via the exponential map $\exp$.  The \emph{standard dilation} on $G$ is given by 
\begin{align*}
  \delta_\lambda: G &\to G \\
  \exp(v_1+ v_2 +\cdots + v_r) &\mapsto \exp(\lambda v_1+ \lambda^2 v_2 +\cdots + \lambda^r v_r)
\end{align*}
when $v_i \in \V_i$.

We can define a projection onto the horizontal layer
\begin{align*}
  \pi: G &\to \V_1 \\
  \exp(v_1 + ... + v_r) &\mapsto v_1
\end{align*}
where $v_i \in \V_i$.  Assuming we have a metric on $G$, we can then measure how non-horizontal a group element $g$ is via the function $\NH(g)=d(\exp(\pi(g)), g)$.  Note that $\NH(a^{-1}b) = 0$ if and only if $a$ and $b$ lie on a horizontal line.  If $\NH(a^{-1}b) = 0$, we can define the horizontal interpolant
\begin{align*}
  \overline{ab} := \{ a \delta_t(a^{-1}b) : t \in [0,1] \}.
\end{align*}

We will use the following special semi-metric from \cite{Li}.
\begin{prop}[Proposition 7.2 of \cite{Li}] \label{p:convex-norm}
  There exists a continuous homogeneous norm $\|\cdot\| : G \to [0,\infty)$ and $p,C \geq 1$ so that if we define the semi-metric $d(g,h) = \|g^{-1}h\|$, then the following properties hold:
  \begin{enumerate}[(i)]
    \item for all $u,v,w \in G$ we have
    \begin{align}
      \frac{d(u,v)^p + d(v,w)^p}{2} \geq \left( \frac{d(u,w)}{2} \right)^p + C\NH(u^{-1}w)^p, \label{e:p-convex}
    \end{align}
    \item $\pi : (G,d) \to (\V_1,|\cdot|)$ is 1-Lipschitz.
    \item $\|g\| = |\pi(g)|$ if and only if $g = \exp(v)$ for some $v \in \V_1$,
  \end{enumerate}
\end{prop}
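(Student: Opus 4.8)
The plan is to write $\|\cdot\|$ down explicitly from the stratification, to verify (ii) and (iii) by inspection, and to isolate (i) as the only real work. For $g=\exp(v_1+\dots+v_r)$ with $v_j\in\V_j$, I would fix an exponent $p\ge 2$ and weights $1\ge\eta_2\ge\dots\ge\eta_r>0$ to be chosen at the end, and set $\|g\| := \bigl(|v_1|^p+\sum_{j=2}^r\eta_j\,|v_j|^{p/j}\bigr)^{1/p}$. Since $\delta_\lambda$ multiplies $v_j$ by $\lambda^j$, this is homogeneous of degree one; it is continuous and vanishes only at the identity; and because $v_j(g^{-1})=-v_j(g)$, the induced $d(g,h)=\|g^{-1}h\|$ is symmetric, hence a semi-metric. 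As $[\V_i,\V_j]\subset\V_{i+j}$, in exponential coordinates of the first kind the horizontal part of a product is additive, so $\pi$ is a homomorphism onto $\V_1$ with $|\pi(g)|^p\le\|g\|^p$; this gives (ii), and equality there forces $v_2=\dots=v_r=0$, i.e.\ $g=\exp(v_1)$, which is (iii). Everything then hinges on choosing $p$ and the $\eta_j$ so that (i) holds.

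For (i), by left-invariance I would set $x:=u^{-1}v$, $y:=v^{-1}w$, $z:=xy=u^{-1}w$, and expand each layer of $z$ via Baker--Campbell--Hausdorff: $v_j(z)=v_j(x)+v_j(y)+B_j$, where $B_j$ is a universal Lie polynomial in the layer-$k$ components of $x$ and $y$ with $k<j$. The structural point is that $\log(x^2)=2\log x$, so $B_j(x,x)=0$; hence $B_j$ lies in the ideal generated by the differences $v_k(y)-v_k(x)$ $(k<j)$, and by weighted homogeneity one gets $|B_j|\le c_0\sum_{k<j}|v_k(y)-v_k(x)|\,M_{j-k}(x,y)$, where $M_m$ collects monomials of weighted degree $m$ in the lower components. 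On the other hand $\NH(z)^p=\|\exp(-\pi(z))\,z\|^p$ is comparable to $\sum_{j\ge 2}\eta_j|v_j(z)|^{p/j}$, since removing the horizontal part of $z$ only perturbs layers $\ge 3$ by brackets with $\pi(z)$. Now the left side of (i) minus $(d(u,w)/2)^p$ equals a sum of convexity deficits, namely $D_1=\tfrac12(|v_1(x)|^p+|v_1(y)|^p)-|\tfrac12(v_1(x)+v_1(y))|^p$ and $D_j=\eta_j\bigl(\tfrac12(|v_j(x)|^{p/j}+|v_j(y)|^{p/j})-2^{-p}|v_j(z)|^{p/j}\bigr)$ for $j\ge 2$. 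Clarkson's inequality (uniform convexity of $\ell^p$, $p\ge 2$) makes $D_j$ dominate both $c_0\eta_j|v_j(y)-v_j(x)|^{p/j}$ and a fixed fraction of $\eta_j(|v_j(x)|^{p/j}+|v_j(y)|^{p/j})$; the latter absorbs the $v_j(x)+v_j(y)$ part of $|v_j(z)|^{p/j}$, while the former, via the bound on $|B_j|$ and Young's inequality applied with exponents matching the weighted degrees (so the estimate stays homogeneous), absorbs the mixing terms $\eta_j|B_j|^{p/j}$ into lower deficits $D_k$ and a small fraction of the lower-layer budget. Taking the $\eta_j$ to decrease fast enough in $j$ makes all of these absorptions simultaneously consistent and yields (i) with constants depending only on $G$.

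The hard part will be the constant bookkeeping: one must fix $p$ and distribute the $\eta_j$ so that the positive deficits genuinely dominate $\sum_{j\ge2}\eta_j|v_j(z)|^{p/j}$ with a definite surplus left over for $C\,\NH(z)^p$. The critical configuration is $x\approx y$, where every difference $v_k(y)-v_k(x)$, hence every $B_j$, is small and all the first-order deficits degenerate; there $\NH(z)$ is comparable to $\NH(\exp(2\log x))$, i.e.\ to the higher-layer content of $x$, which must be matched by the pure \emph{strict-convexity} gaps $|v_j(x)|^{p/j}(1-2^{p/j-p})$, $j\ge 2$, alone. Controlling this is what pins down the admissible $p$ and $C$, and it is the crux of the argument. (Alternatively, one can read (i) as the uniform convexity of $\ell^p$ transported through the polynomial group law and deduce it from the latter together with the $1$-Lipschitz property of $\pi$ and Lipschitz control of horizontal interpolants; one may also start from a Hebisch--Sikora subadditive homogeneous norm and perturb, though subadditivity itself is not needed, since (i) already supplies a quasi-triangle inequality with constant $2$.)
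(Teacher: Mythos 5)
The result is imported wholesale from [Li] (Proposition 7.2 there); this paper gives no proof of its own, so there is no in-paper argument to compare against. Your sketch is therefore best judged as a reconstruction of the cited proof, and it does capture the right architecture: the norm $\|g\|=\bigl(|v_1|^p+\sum_{j\ge 2}\eta_j|v_j|^{p/j}\bigr)^{1/p}$ is homogeneous of degree one by inspection; the fact that $\pi$ is a group homomorphism to $(\V_1,+)$ reduces (ii) to $|\pi(g)|\le\|g\|$ and gives (iii) immediately; and decomposing $\tfrac12(\|x\|^p+\|y\|^p)-2^{-p}\|xy\|^p$ layer by layer, using BCH to write $v_j(z)=v_j(x)+v_j(y)+B_j$ with $B_j$ supported on the differences of lower layers, is exactly the right shape for (i). You also correctly identify the critical configuration $x\approx y$, where the Clarkson deficits degenerate and only the pure strict-convexity gaps $|v_j(x)|^{p/j}(1-2^{p/j-p})$ survive.

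That said, there are real gaps beyond ``bookkeeping.'' First, you attribute the fixed-fraction lower bound on $D_j$ for $j\ge 2$ to Clarkson's inequality, but Clarkson only controls the \emph{difference} $|v_j(x)-v_j(y)|$; the fixed-fraction domination actually comes from the exponent mismatch $p/j<p$ (so $2^{-p}$ undershoots the scaling $2^{-p/j}$ that triangle-inequality balance would demand), and it degenerates at $p=2$ for $j=2$ --- you need $p>2$ strictly, and the precise admissible $p$ is exactly what's undetermined. Second, the claim that $\NH(z)^p$ ``is comparable to $\sum_{j\ge 2}\eta_j|v_j(z)|^{p/j}$'' is false as stated: $v_j(\exp(-\pi(z))z)$ for $j\ge 3$ differs from $v_j(z)$ by Lie polynomials that involve $v_1(z)$, and those corrections can dominate when $v_1(z)$ is large relative to the higher layers, so the comparison is not two-sided and requires the very absorption scheme you defer. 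Third, the absorption of the $\eta_j|B_j|^{p/j}$ terms into lower-layer deficits by Young's inequality with weight-matched exponents has to be carried out consistently across all $j$ with a single choice of $(p,\eta_2,\dots,\eta_r)$; you correctly flag this as the crux but don't supply it, and it is precisely where such arguments can and do fail without a careful inductive choice of weights. In short, the sketch recovers the shape of the cited proof but leaves the hardest steps --- the quantitative interplay between $p$, the weights, and the BCH correction terms, and the correct relation between $\NH(z)$ and the higher-layer data --- as acknowledged but genuine gaps.
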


The semi-metric $d$, which we now fix, is only guaranteed to satisfy a quasi-triangle inequality.  That is, there exist $C_0 \geq 1$ so that
\begin{align}
  d(x,z) \leq C_0(d(x,y) + d(y,z)), \qquad \forall x,y,z \in G. \label{e:quasi-triangle}
\end{align}
However, we have the following estimate, which follows immediately from the continuity of the norm.
\begin{lemma} \label{l:quasi-triangle}
  There exists a continuous increasing function $\zeta : [0,1/2) \to \R$ with $\zeta(0) = 0$, so that if $g \in G$ and $d(g,h) < \epsilon d(0,g)$, then
  \begin{align*}
    d(0,h) \leq (1+\zeta(\epsilon))d(0,g).
  \end{align*}
\end{lemma}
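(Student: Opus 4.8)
The plan is to reduce the statement to a scale-invariant one using the dilations $\delta_\lambda$, and then to read off $\zeta$ as (a mild perturbation of) the modulus of continuity of $\|\cdot\|$ near the unit sphere. The point is that $d(0,g)=\|g\|$ and, since each $\delta_\lambda$ is a group automorphism, $d(0,\delta_\lambda g)=\|\delta_\lambda g\|=\lambda\|g\|$ and $\delta_\lambda(g\ast u)=(\delta_\lambda g)\ast(\delta_\lambda u)$; so the inequality $d(g,h)<\epsilon\, d(0,g)$ is $\delta$-homogeneous and it suffices to control $\|g\ast u\|$ when $\|g\|=1$ and $\|u\|<\epsilon$.

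The key steps are as follows. First, recall the standard fact that for a continuous homogeneous norm on a Carnot group the unit sphere $\Sigma:=\{g\in G:\|g\|=1\}$ is compact (the norm is continuous, positive off the identity $0$, and proper; equivalently, invoke the equivalence of all continuous homogeneous norms), and likewise $\overline B:=\{u\in G:\|u\|\le 1/2\}$ is compact since $\overline B=\delta_{1/2}(\Sigma\cup\{0\})$. Second, since $(g,u)\mapsto\|g\ast u\|$ is continuous on the compact set $\Sigma\times\overline B$, define
\[
\omega(\epsilon):=\max\{\,\|g\ast u\|:g\in\Sigma,\ \|u\|\le\epsilon\,\},\qquad\epsilon\in[0,1/2).
\]
Third, verify the elementary properties of $\omega$: it is finite; $\omega(0)=\max_{g\in\Sigma}\|g\ast 0\|=\max_{g\in\Sigma}\|g\|=1$; it is non-decreasing, because $\{u:\|u\|\le\epsilon\}=\delta_\epsilon\overline B$ increases with $\epsilon$; and $\omega(\epsilon)\ge\|g\ast 0\|=1$ for all $\epsilon$. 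Continuity of $\omega$ on $[0,1/2)$ follows because the family of compacta $\delta_\epsilon\overline B$ varies continuously in the Hausdorff metric (collapsing to $\{0\}$ as $\epsilon\to0$) and a maximum of a fixed continuous function over a Hausdorff-continuous family of compact sets is continuous. Finally set $\zeta(\epsilon):=\omega(\epsilon)-1+\epsilon$, so that $\zeta:[0,1/2)\to\R$ is continuous, strictly increasing, $\zeta(0)=0$, and $1+\zeta(\epsilon)\ge\omega(\epsilon)$.

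To conclude, take $g,h\in G$ with $d(g,h)<\epsilon\, d(0,g)$. If $g=0$ the hypothesis forces $h=0$ and the claim is trivial, so assume $g\neq0$ and put $u:=g^{-1}h$, so $\|u\|<\epsilon\|g\|$. Rescaling by $\delta_{\|g\|^{-1}}$ and writing $\widehat g:=\delta_{\|g\|^{-1}}g$, $\widehat u:=\delta_{\|g\|^{-1}}u$, we get $\|\widehat g\|=1$, $\|\widehat u\|<\epsilon<1/2$, and $\delta_{\|g\|^{-1}}(g\ast u)=\widehat g\ast\widehat u$ since dilations are automorphisms. Hence
\[
d(0,h)=\|g\ast u\|=\|g\|\,\|\widehat g\ast\widehat u\|\le\|g\|\,\omega(\epsilon)\le(1+\zeta(\epsilon))\,\|g\|=(1+\zeta(\epsilon))\,d(0,g),
\]
as desired. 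The only mildly delicate point is the continuity of $\omega$ at $0$ — which is just uniform continuity of $(g,u)\mapsto\|g\ast u\|$ on the compact set $\Sigma\times\overline B$ — so I anticipate no genuine obstacle, consistent with the lemma being stated as following immediately from the continuity of the norm; if one prefers $\zeta$ merely non-decreasing rather than strictly increasing, one may drop the $+\epsilon$ term.
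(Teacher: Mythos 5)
The paper gives no proof here; it simply asserts that the estimate ``follows immediately from the continuity of the norm.'' Your argument is a correct and natural fleshing-out of that remark: reduce by homogeneity to $\|g\|=1$, use compactness of the unit sphere $\Sigma$ and of the ball $\overline B$ of a continuous homogeneous norm (via equivalence of such norms), define $\omega(\epsilon)=\max\{\|g*u\|: g\in\Sigma,\ \|u\|\le\epsilon\}$, and set $\zeta=\omega-1$ (plus $\epsilon$ if strictness is wanted). The rescaling step and the final chain of inequalities are correct, and the case $g=0$ is handled (though note that the hypothesis $d(g,h)<\epsilon\,d(0,g)=0$ is then simply vacuous rather than ``forcing $h=0$'').

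Two small slips worth flagging, neither of which affects the conclusion. First, $\overline B=\{u:\|u\|\le 1/2\}$ is \emph{not} $\delta_{1/2}(\Sigma\cup\{0\})$; the latter is just the sphere of radius $1/2$ together with the origin. Compactness of $\overline B$ instead follows because it is the image of $[0,1/2]\times\Sigma$ under $(\lambda,g)\mapsto\delta_\lambda g$, or directly from norm equivalence (it is closed and Euclidean-bounded). Second, $\{u:\|u\|\le\epsilon\}$ equals $\delta_{2\epsilon}\overline B$, not $\delta_\epsilon\overline B$; the monotonicity of $\omega$ is of course immediate anyway. For the continuity of $\omega$, rather than appealing to Hausdorff continuity of the family $\{u:\|u\|\le\epsilon\}$, one can argue more directly: monotonicity gives one-sided limits, and for the other side one rescales a near-maximizer $u^*$ for $\omega(\epsilon_0)$ by $\delta_{\epsilon/\epsilon_0}$ and uses uniform continuity of $(g,u)\mapsto\|g*u\|$ on $\Sigma\times\overline B$. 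This is the same uniform-continuity fact you already invoke for $\omega(\epsilon)\to1$, so the proof becomes slightly more self-contained.
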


Recall from \cite{Li}  Lemma 3.7 we have the following 
\begin{lemma} \label{l:drift}
There exists a constant $C_1 > 0$ that depends only on $G$ such that for $\eta \in (0,1)$ if $g,h \in G$ with $d(g,h) \leq \eta$ and $u \in \V_1$ with $|u| \leq 1$ then for $t \in [0,1]$
$$d (g e^{tu}, h e^{tu}) \leq C_1 \eta^{1/r}$$
where $r$ is the step of $G$. 
\end{lemma}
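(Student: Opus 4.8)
The plan is to reduce the estimate to a statement about conjugation by horizontal elements, using the left-invariance of $d$. Writing $w=g^{-1}h$, we have $d(g,h)=\Norm{w}\le\eta$ and
\[
d(ge^{tu},he^{tu})=\Norm{(ge^{tu})^{-1}(he^{tu})}=\Norm{e^{-tu}\,w\,e^{tu}},
\]
so it suffices to show that conjugating an element of homogeneous norm at most $\eta$ by $e^{tu}$ with $|t|\le 1$ and $|u|\le 1$ produces an element of homogeneous norm at most $C_1\eta^{1/r}$.

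First I would record the standard fact that on the Carnot group $G$ all continuous homogeneous (quasi-)norms are mutually equivalent up to multiplicative constants, and in particular $\Norm{\exp(v_1+\cdots+v_r)}$ is comparable, with constants $c_0$ depending only on $G$, to $\max_{1\le j\le r}\norm{v_j}^{1/j}$ when $v_j\in\V_j$. Consequently $\Norm{w}\le\eta$ forces $\norm{v_j}\le c_0\eta^j$ for each $j$, where $w=\exp(v_1+\cdots+v_r)$ with $v_j\in\V_j$.

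Next, since conjugation corresponds under $\exp$ to the adjoint action, $e^{-tu}we^{tu}=\exp\!\big(\operatorname{Ad}(e^{-tu})(v_1+\cdots+v_r)\big)=\exp\!\big(e^{-t\operatorname{ad}(u)}(v_1+\cdots+v_r)\big)$, and $\operatorname{ad}(u)$ is nilpotent of order $\le r$ because $G$ has step $r$. I would expand $e^{-t\operatorname{ad}(u)}=\sum_{k=0}^{r-1}\frac{(-t)^k}{k!}(\operatorname{ad}u)^k$ and observe that, since $u\in\V_1$, one has $(\operatorname{ad}u)^kv_j\in\V_{j+k}$. Collecting terms by layer, the $\V_m$-component of $e^{-t\operatorname{ad}(u)}(v_1+\cdots+v_r)$ equals $\sum_{k=0}^{m-1}\frac{(-t)^k}{k!}(\operatorname{ad}u)^kv_{m-k}$, whose Euclidean norm is at most $c_0\sum_{k=0}^{m-1}\norm{u}^k\norm{v_{m-k}}\le c_0\sum_{k=0}^{m-1}\eta^{m-k}\le c_0\eta$, using $|t|\le 1$, $|u|\le 1$, $\eta<1$, and the bound $\norm{v_{m-k}}\le c_0\eta^{m-k}$ from the previous step; here $c_0$ absorbs the operator norms of the iterated bracket maps, which depend only on $G$.

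Finally I would feed these layer-wise bounds back into the comparison with $\max_m\norm{\cdot}^{1/m}$: the $\V_m$-component of the conjugate has Euclidean norm at most $c_0\eta$, so it contributes at most $(c_0\eta)^{1/m}\le c_0\eta^{1/r}$ (since $\eta<1$ and $m\le r$), and taking the maximum over $m$ gives $\Norm{e^{-tu}we^{tu}}\le C_1\eta^{1/r}$ with $C_1$ depending only on $G$. The main obstacle is not any single step but keeping the bookkeeping honest: one must check that the equivalence ``homogeneous norm $\sim$ layer-wise quasi-norm'' is genuinely available for the specific semi-metric norm of Proposition \ref{p:convex-norm} (it is, since that equivalence holds for any continuous homogeneous norm on a Carnot group), and that every constant introduced---the norm-equivalence constant, the operator norms of the iterated brackets, and the combinatorial factors---depends only on the group structure of $G$ and not on $\eta$, $t$, $u$, $g$, or $h$.
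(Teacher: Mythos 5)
Your argument is correct. The paper does not actually prove Lemma \ref{l:drift}---it is quoted from Lemma 3.7 of \cite{Li}---so there is no in-paper proof to compare against; your route via left-invariance (reducing to $\Norm{e^{-tu}we^{tu}}$), the adjoint action $e^{-t\operatorname{ad}(u)}$ with $(\operatorname{ad}u)^k\V_j\subset\V_{j+k}$, and the equivalence of continuous homogeneous norms with the layer-wise quasi-norm $\max_j\norm{v_j}^{1/j}$ is the standard one, and the layer-by-layer estimate $\sum_{k=0}^{m-1}\eta^{m-k}\leq r\eta$ followed by the $m$-th root does yield the exponent $1/r$ with constants depending only on $G$. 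The one point that genuinely needs checking for the specific norm of Proposition \ref{p:convex-norm} is definiteness (so that the compactness argument giving the norm equivalence applies); this holds because the induced semi-metric is biLipschitz to the subRiemannian metric, and since your argument uses only continuity and homogeneity of the norm it also establishes the remark following the lemma that the estimate is valid for any semi-metric induced by a continuous homogeneous norm.
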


This holds for any (semi-)metric induced by a homogeneous norm.  The next theorem gives a quantitative bound for how non-horizontal a unit element of $G$ can be given how large its horizontal component is.

\begin{lemma} \label{l:NH-gtr}
  For all $\epsilon > 0$, there exists $\delta(\epsilon) > 0$ so that for all $g \in G$,
  \begin{align}
    \frac{\NH(g)}{\|g\|} < \delta &\Longrightarrow \frac{|\pi(g)|}{\|g\|} > 1-\epsilon, \label{e:NH-gtr} \\
    \frac{|\pi(g)|}{\|g\|} > 1-\delta &\Longrightarrow \frac{\NH(g)}{\|g\|} < \epsilon. \label{e:NH-less}
  \end{align}
\end{lemma}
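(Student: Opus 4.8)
The plan is to prove Lemma \ref{l:NH-gtr} by combining the convexity inequality \eqref{e:p-convex} with property (iii) of Proposition \ref{p:convex-norm}, using a compactness argument on the unit sphere of $(G,\|\cdot\|)$. First I would normalize: since all three quantities $\NH(g)$, $|\pi(g)|$, and $\|g\|$ are homogeneous of degree $1$ under $\delta_\lambda$, and $\pi(\delta_\lambda g) = \lambda \pi(g)$, it suffices to prove the two implications for $g$ with $\|g\| = 1$. The set $\Sigma = \{g \in G : \|g\| = 1\}$ is compact by continuity and properness of the homogeneous norm, and the functions $g \mapsto \NH(g)$ and $g \mapsto |\pi(g)|$ are continuous on $\Sigma$.

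For \eqref{e:NH-gtr}, I would argue by contradiction: if it fails for some $\epsilon > 0$, there is a sequence $g_k \in \Sigma$ with $\NH(g_k) \to 0$ but $|\pi(g_k)| \leq 1 - \epsilon$. By compactness, pass to a subsequential limit $g_\infty \in \Sigma$; by continuity $\NH(g_\infty) = 0$ and $|\pi(g_\infty)| \leq 1 - \epsilon < 1 = \|g_\infty\|$. But $\NH(g_\infty) = d(\exp(\pi(g_\infty)), g_\infty) = 0$ forces $g_\infty = \exp(\pi(g_\infty))$ (since $d$ is a semi-metric vanishing only on the diagonal — here one should check that $\|\cdot\|$ being a homogeneous norm means $\|h\| = 0 \iff h = 0$, so $d$ separates points), hence $g_\infty = \exp(v)$ for $v = \pi(g_\infty) \in \V_1$. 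Then property (iii) gives $\|g_\infty\| = |\pi(g_\infty)|$, contradicting $|\pi(g_\infty)| < \|g_\infty\|$. The implication \eqref{e:NH-less} is handled symmetrically: if it fails, get $g_k \in \Sigma$ with $|\pi(g_k)| \to 1$ but $\NH(g_k) \geq \epsilon$; pass to a limit $g_\infty \in \Sigma$ with $|\pi(g_\infty)| = 1 = \|g_\infty\|$ and $\NH(g_\infty) \geq \epsilon > 0$; property (iii) then forces $g_\infty = \exp(v)$ with $v \in \V_1$, whence $\NH(g_\infty) = d(\exp(\pi(g_\infty)), g_\infty) = d(g_\infty, g_\infty) = 0$, a contradiction.

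Alternatively, one can make this effective rather than via contradiction, which I would mention as a remark: \eqref{e:p-convex} applied with $u = w = g$, $v = \exp(\pi(g))$ — actually more cleanly, applying \eqref{e:p-convex} with $u = 0$, $w = g$, $v = \exp(\pi(g))$ gives $\tfrac12(|\pi(g)|^p + d(\exp(\pi(g)),g)^p) \geq (\|g\|/2)^p + C\,\NH(g)^p$, i.e.\ $|\pi(g)|^p + \NH(g)^p \geq 2^{1-p}\|g\|^p + 2C\,\NH(g)^p$, so $|\pi(g)|^p \geq 2^{1-p}\|g\|^p + (2C-1)\NH(g)^p$ when $2C \geq 1$; this directly controls how $|\pi(g)|/\|g\|$ degrades with $\NH(g)/\|g\|$ and vice versa, and combined with $|\pi(g)| \leq \|g\|$ from property (ii) yields both implications with explicit $\delta(\epsilon)$. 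However I would be careful here that the constant relationship $2C \geq 1$ may not hold as stated, so the safest writeup is the compactness argument above, which needs only properties (ii) and (iii) of Proposition \ref{p:convex-norm} and no assumption on $C$.

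The main obstacle is a minor one: ensuring the semi-metric $d$ genuinely separates points so that $\NH(h) = 0 \iff h$ is horizontal (stated after the definition of $\NH$), which requires that the homogeneous norm $\|\cdot\|$ vanishes only at the identity; this is part of the definition of a homogeneous norm and is implicit in Proposition \ref{p:convex-norm}, so I would simply invoke it. The only other point requiring a line of care is the properness/compactness of $\Sigma = \{\|g\| = 1\}$, which follows from $\|\cdot\|$ being a continuous homogeneous norm on the simply connected nilpotent group $G \cong \R^n$ (homogeneous norms are proper). With those in hand the argument is short.
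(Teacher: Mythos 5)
Your compactness argument is exactly the paper's proof: normalize to the unit sphere $\{\|g\|=1\}$, extract a convergent subsequence from a putative counterexample, and use continuity of $\NH$ and $\pi$ together with property (iii) of Proposition~\ref{p:convex-norm} to reach a contradiction, with the second implication handled symmetrically. One small caution on your optional ``effective'' aside: substituting $u=0$, $v=\exp(\pi(g))$, $w=g$ into \eqref{e:p-convex} gives $|\pi(g)|^p \geq 2^{1-p}\|g\|^p + (2C-1)\NH(g)^p$, and since $2^{1-p}<1$ for $p>1$ this does not force $|\pi(g)|/\|g\|\to 1$ as $\NH(g)/\|g\|\to 0$ (it remains bounded away from $1$ even at $\NH(g)=0$); so that route would not actually yield the lemma, and you were right to fall back to the compactness argument.
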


\begin{proof}
  Let us first prove \eqref{e:NH-gtr}.  Suppose the statement is not true.  Then there exists $\epsilon > 0$ and $g_n \in G$ so that $\|g_n\| = 1$, $\NH(g_n) \to 0$, and $|\pi(g_n)| \leq 1 - \epsilon$.  As the unit sphere is compact, we may pass to a converging subsequence $g_n \to g$.  As $\NH$ and $\pi$ are continuous, we get that $\|g\| =1$, $\NH(g) = 0$, and $|\pi(g_n)| \leq 1-\epsilon$.  However, $\NH(g) = 0$ implies that $g$ is horizontal in which case we have by construction of $\|\cdot\|$ that $\|g\| = |\pi(g)|$, a contradiction.

  The proof of \eqref{e:NH-less} follows by a similar limiting argument.
\end{proof}

\begin{lemma} \label{l:uniform-equiv}
  The restriction of $\exp: (\V,|\cdot|) \to G$ on any compact set is a uniform homeomorphism.  The modulus of uniform continuity can depend on on the compact set.
\end{lemma}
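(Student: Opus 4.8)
The plan is to reduce the statement to two standard facts: that a continuous bijection from a compact space to a Hausdorff space is a homeomorphism, and that a continuous map on a compact metric space is uniformly continuous (Heine--Cantor). First I would fix a compact set $K \subset \V$ and let $K' = \exp(K) \subset G$; since $\exp$ is a homeomorphism of $\V \cong \R^n$ onto $G$ (as $G$ is connected, simply connected, and nilpotent, this is the standard fact already invoked throughout \S\ref{sec:nilp}), the set $K'$ is compact in $G$ and the restriction $\exp|_K : K \to K'$ is a continuous bijection. Because $K$ is compact and $G$ is Hausdorff (being a metric space), $\exp|_K$ is automatically a homeomorphism onto $K'$, so its inverse $\exp^{-1}|_{K'} : K' \to K$ is also continuous.

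Next I would apply the Heine--Cantor theorem in both directions. The map $\exp|_K : (K, |\cdot|) \to (K', d)$ is continuous on the compact metric space $(K,|\cdot|)$, hence uniformly continuous; its modulus of uniform continuity is obtained from the standard compactness argument (cover $K$ by finitely many balls on which the oscillation is small) and manifestly depends on $K$. Symmetrically, $\exp^{-1}|_{K'} : (K', d) \to (K, |\cdot|)$ is continuous on the compact metric space $(K', d)$, hence uniformly continuous, with a modulus depending on $K'$ and therefore on $K$. A map that is a bijection, uniformly continuous, and has uniformly continuous inverse is by definition a uniform homeomorphism, which gives the claim.

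One mild point worth addressing is that $d$ as fixed in Proposition \ref{p:convex-norm} is only a semi-metric satisfying a quasi-triangle inequality \eqref{e:quasi-triangle}, so strictly speaking ``uniformly continuous with respect to $d$'' should be read in the obvious $\epsilon$--$\delta$ sense with $d$ in place of a genuine metric; none of the compactness arguments above use the triangle inequality for $d$, only the continuity of the homogeneous norm (equivalently, of $d$ as a function $G \times G \to [0,\infty)$) and the compactness of $K$ and $K'$. Alternatively one can run the entire argument with the left-invariant Riemannian metric $d_G$ on $G$, which is bi-Lipschitz-on-compacts comparable to $d$, and the statement transfers. I do not expect any genuine obstacle here; the only thing to be careful about is simply recording that the modulus of continuity depends on the chosen compact set, which is exactly what the statement already concedes.
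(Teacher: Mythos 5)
Your proof is correct and is essentially the same as the paper's, which simply observes that $\exp$ is a homeomorphism and then invokes compactness to get uniform continuity in both directions; you have merely spelled out the standard steps (compact-to-Hausdorff bijection is a homeomorphism, Heine--Cantor both ways) that the paper leaves implicit. Your remark about $d$ being only a semi-metric with a quasi-triangle inequality is a reasonable sanity check but, as you correctly note, harmless, since the argument uses only continuity of the norm and compactness.
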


\begin{proof}
  We have that $\exp$ is a homeomorphism.  Thus, restricting to compact sets, we get uniform continuity in both directions.
\end{proof}

As we have identified $G$ with $\R^n$ via exponential coordinates, we can talk about the Lebesgue measure $\cL^n$ on $G$.  It is not hard to see (by looking at the Jacobians of left translation and dilation) that $\cL^n$ is Haar and $\cL^n(\delta_\lambda (E))=\lambda^Q\cL^n(E)$ for $E \subset G$ measurable where
$Q= \sum_{i=1}^n i \cdot \dim(\V_i)$.
From now on, we use $|\cdot|$ to also denote the usual Lebesgue measure.  While, this does conflict with $|\cdot|$ as the Euclidean norm on $\V$, it should be clear from context which one we are using.  We will use $\cL^n$ if we want to be careful.

Given a vector $v \in \V_1$, we let $\pi_v : G \to \R$ denote the composition of $\pi$ with the projection onto the line spanned by $v$ and $\HH_v$ denote the exponential image of the subspace of $\mg$ orthogonal to $v$.  We will sometimes use an ordering on $\R v \subset \V$ as induced by its identification with $\R$ (choosing one of the two orderings arbitrarily).  Note that any horizontal line $g \exp(\R v)$ intersects $\HH_v$ in one unique point.  Thus, we can then define the following ``backwards projection'' function
\begin{align*}
  \nu_v: G &\to \HH_v \\
  g &\mapsto g \exp(\R v) \cap \HH_v.
\end{align*}

\begin{lemma}
  For any $v \in \V_1$ and $A \subseteq G$, we have
  \begin{align*}
    \cL^{n-1}(\nu_v(gA)) = \cL^{n-1}(\nu_v(A)), \qquad \forall g \in G.
  \end{align*}
  In particular, there exists a constant $c_1 > 0$ depending only on $v$, $G$, and its metric so that
  \begin{align*}
    \cL^{n-1}(\nu(B(g,r))) = c_1 r^{Q-1}, \qquad \forall g \in G, r > 0.
  \end{align*}
\end{lemma}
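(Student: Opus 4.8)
The plan is to establish both the translation-invariance statement and the ball-volume corollary, with the former doing most of the work. First I would prove the identity $\cL^{n-1}(\nu_v(gA)) = \cL^{n-1}(\nu_v(A))$ by understanding the geometry of $\nu_v$. The key observation is that $\nu_v$ is (up to the identification $\exp$) the projection along the horizontal lines in the direction $v$, and $\HH_v$ is a codimension-one subset transverse to these lines. Equivalently, one can factor $G$ as a product $\HH_v \times \R$ via $g \mapsto (\nu_v(g), \pi_v(g))$, under which $\cL^n$ decomposes as $\cL^{n-1}_{\HH_v} \otimes \cL^1$ up to a fixed constant (since in exponential coordinates of the first kind this is just a linear change of variables, so the Jacobian is constant). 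Under this product structure, $\nu_v$ becomes projection onto the first factor, so $\cL^{n-1}(\nu_v(A))$ is, up to a constant, the $\cL^n$-measure of the ``cylinder'' $\nu_v(A)\times[0,1]$, or can be extracted by a Fubini-type argument from the measure of $A\cap(\text{slab})$.

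Next, the point is that left translation by $g$ preserves the fibration by horizontal lines in the direction $v$: indeed, $g\cdot(h\exp(\R v)) = (gh)\exp(\R v)$, so left translation permutes the horizontal lines of direction $v$ among themselves, and hence descends to a well-defined bijection $\bar{g}$ of the leaf space, which we have identified with $\HH_v$. Concretely, $\nu_v(gh)$ depends only on $\nu_v(h)$, giving a map $\bar g: \HH_v \to \HH_v$ with $\nu_v \circ (g\cdot) = \bar g \circ \nu_v$, and $\nu_v(gA) = \bar g(\nu_v(A))$. So it suffices to show $\bar g$ preserves $\cL^{n-1}$ on $\HH_v$. This follows because left translation by $g$ is $\cL^n$-measure-preserving (Haar) and respects the product decomposition in the fiber direction in the following sense: translation by a horizontal element $\exp(sv)$ acts on the $\R$-factor by the translation $t\mapsto t+s$ while fixing the $\HH_v$-factor, and a general $g$ can be routed through such horizontal translations, or more cleanly, one notes $\cL^n(A \cap \text{slab of width } w) = w \cdot (\text{const}) \cdot \cL^{n-1}(\nu_v(A))$ for suitable $A$, and the left-hand side is translation-invariant by Haar-ness. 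I would carry this out by first proving the formula $\cL^n\big(\{h : \nu_v(h)\in \nu_v(A),\ \pi_v(h)\in[0,w]\}\big) = c\, w\, \cL^{n-1}(\nu_v(A))$, then applying left-translation-invariance of $\cL^n$ together with $g\cdot\{\nu_v(h)\in\nu_v(A)\} = \{\nu_v(h)\in\nu_v(gA)\}$, noting that the slab $\pi_v\in[0,w]$ maps to a slab (or can be exhausted by slabs) of the same width since $\pi_v(gh) - \pi_v(h)$ is independent of $h$ (because $\pi_v \circ \pi$ is a Lie group homomorphism to $\R$, being a composition of the abelianization-type projection with a linear map).

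For the ``in particular'' clause: by the first part, $\cL^{n-1}(\nu_v(B(g,r))) = \cL^{n-1}(\nu_v(g B(0,r))) = \cL^{n-1}(\nu_v(B(0,r)))$, so the quantity depends only on $r$, $v$, $G$, and the metric. Then the scaling law follows from the dilation behavior: $B(0,r) = \delta_r(B(0,1))$ since $\|\cdot\|$ is homogeneous and $d(0,\cdot) = \|\cdot\|$, so $\nu_v(B(0,r)) = \nu_v(\delta_r B(0,1))$. Because $\delta_r$ scales the $v$-component by $r$ and preserves $\HH_v$ as a set while dilating it, one checks $\nu_v \circ \delta_r = \delta_r \circ \nu_v$ (the dilation commutes with backwards projection along $v$ since it scales each horizontal line of direction $v$ linearly and fixes its intersection point with $\HH_v$ up to the dilation of $\HH_v$). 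Since $\delta_r$ restricted to $\HH_v$ is a dilation of the $(n-1)$-dimensional homogeneous space $\HH_v$ whose Jacobian is $r^{Q-1}$ (because $Q-1 = \sum_{i} i\dim\V_i - 1$ is exactly the homogeneous dimension of the complement of the one-dimensional horizontal direction $\R v$), we get $\cL^{n-1}(\nu_v(B(0,r))) = r^{Q-1}\cL^{n-1}(\nu_v(B(0,1))) =: c_1 r^{Q-1}$.

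The main obstacle is the bookkeeping in the first part: making precise the claim that $\nu_v$ descends left translation to a measure-preserving map of $\HH_v$, i.e.\ carefully setting up the product decomposition $G \cong \HH_v \times \R v$ and verifying that $\cL^n$ is (a constant multiple of) the product measure and that the $\pi_v$-component of $gh$ differs from that of $h$ by a constant. All of these are consequences of working in exponential coordinates of the first kind, where left translation and dilation have triangular/polynomial form with the relevant linearities in the horizontal layer, but the cleanest route is probably to avoid coordinates and argue via Haar-invariance and the homomorphism property of $\pi$ as sketched above; the technical care needed to handle the fact that $\HH_v$ is not a subgroup (only $\nu_v$'s leaf space) is where I expect to spend the most effort.
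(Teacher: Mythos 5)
Your proposal is correct and takes essentially the same route as the paper: you obtain a map $\bar g$ of $\HH_v$ with $\nu_v(gA) = \bar g(\nu_v(A))$, and show it preserves $\cL^{n-1}$ by combining Haar invariance of $\cL^n$, the homomorphism property of $\pi_v$, and the constancy of the Jacobian of $(y,t) \mapsto y\exp(tv)$; the paper writes $\nu_v(gA) = g\,\nu_v(A)\exp(-\pi_v(g)v)$ explicitly and invokes the same constant-Jacobian/BCH fact for the resulting left and right translations between slices, and the dilation argument for the $r^{Q-1}$ law is identical. Two small points to correct in your write-up: the map $g \mapsto (\nu_v(g),\pi_v(g))$ is \emph{not} a linear change of coordinates on $\mg$ (BCH makes it genuinely polynomial); the Jacobian is nonetheless constant because, ordered by layer, the map is unipotent upper-triangular, $\pi(\Phi(\eta,t))=\eta_1+tv$ with the $\V_{\ge 2}$-components equal to $\eta_k$ plus polynomials in lower layers and $t$. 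Also, the aside that left translation by $\exp(sv)$ ``fixes the $\HH_v$-factor'' fails in nonabelian $G$ (that statement is true for \emph{right} translation, since $\nu_v$ is constant along right $\exp(\R v)$-orbits but not along left ones); this does not affect your proof since the Fubini/Haar argument you actually carry out does not rely on it.
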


\begin{proof}
  We have
  \begin{align*}
    \cL^{n-1}(\nu_v(gA)) &= \cL^{n-1}(\{z \in \HH_v : z \cdot \exp(\R v) \cap gA \neq \emptyset\}) \\
    &= \cL^{n-1}(\{z \in \HH_v : g^{-1} z \cdot \exp(\R x) \cap A \neq \emptyset\}) \\
    &= \cL^{n-1}(g \{y \in \HH_v : y \cdot \exp(\R x) \cap A \neq \emptyset\} \exp(-\pi_v(g)v)) = (*).
  \end{align*}
  It follows easily from looking at the Jacobian of the Baker-Campbell-Hausdorff formula that both left and right multiplications $(\HH_v,\cL^{n-1}) \to (g\HH_v,\cL^{n-1})$ and $(\HH_v,\cL^{n-1}) \to (\HH_v g,\cL^{n-1})$ are measure preserving on the orthogonal subspaces.  Thus,
  \begin{align*}
    (*) = \cL^{n-1}(\{y \in \HH_v : y \cdot \exp(\R x) \cap A \neq \emptyset\}) = \cL^{n-1}(\nu_v(A)).
  \end{align*}
  We then have that $\cL^{n-1}(\nu_v(B(g,r))) = \cL^{n-1}(\nu_v(B(0,r)))$ and the second statement follows easily from a homogeneity argument with respect to dilation.
\end{proof}

\subsection{Constructing a non-Jacobian}
We now construct a measurable function taking value in $\{1,1+c\}$ that is not the Jacobian of any bi-Lipschitz function $G \to G$.  The proof will now resemble the proof in \cite{BK} where we construct a highly oscillatory function on a ``horizontal tube'' ($R_\lambda$ below), find line segments that must be quantitatively stretched, and nest the functions to compound the stretching.

There will be two twists in the Carnot setting.  The first is that the method of \cite{BK} only works if the endpoints of the tube are close to lying on a horizontal line segment.  This is because we compare how the function behaves on horizontal line segments lying in the horizontal tube $R_\lambda$ and we need the function to behave very horizontally to make the triangle inequality arguments work.  See case 1 below.  It turns out, however, that if the endpoints of the tube are very non-horizontal, then the needed quantitative stretching will follow easily from the properties of the metric in Proposition \ref{p:convex-norm}.  See case 2 below.

The second twist is that parallel horizontal line segments can drift apart in nonabelian Carnot groups (see Lemma \ref{l:drift}).  This will be particularly relevant when we have to compare volumes of neighboring regular balls.  To combat this, we will have to take much thinner slices of the tube (compared to the radius) and compare balls in neighboring slices.  As these balls are much smaller than the radius, we have to look at a much denser set of horizontal lines going through the tube.  By using the fact that the behavior of Lipschitz functions propagate locally (by uniformly continuity) we can actually look at {\it every} horizontal line going through $R_\lambda$.  This allows us to replace the pigeonhole principle used in \cite{BK} with an averaging method.

We now begin the construction.  We will assume $G \neq \R$.  Without loss of generality, we equip the $G$ in the target with the semi-metric from Proposition \ref{p:convex-norm} (remember only the quasi-triangle inequality holds).  This is possible as the statement we are trying to prove is invariant under bi-Lipschitz remetrization, and the metric of Proposition \ref{p:convex-norm} is bi-Lipschitz to the subRiemannian metric.  For the domain $G$, we equip that with any left-invariant homogeneous true metric (for example, the subRiemannian metric).

Fix an orthonormal basis of $\mg$ and pick $x\in \V_1$ in this basis. We call $e^{tx}$ the $x$-axis 
and write $\HH = \HH_x$ and $\nu = \nu_x$.
Note that $\pi_x(\HH)=0$.
We also set $d_{\pi_x}(g,h)=|\pi_x(h^{-1}g)|$.

For any $\lambda > 0$, we define $\bar{B}_\lambda := \HH \cap B(0,\lambda)$.  We can also define the tube
$$R_\lambda= \{ be^{tx} \mid b \in \bar{B}_\lambda,  t \in [0,1] \}$$
and break $R_\lambda$ into $N$ equal `slices'
$$R^i_\lambda = \{ be^{tx} \mid b \in \bar{B}_\lambda,  t \in [(i-1)/N,i/N) \}$$
for $i=1,...,N$.  We can add the face $\bar{B}_\lambda e^x$ to $R^N_\lambda$.  A simple homogeneity argument gives that $\cL^{n-1}(\bar{B}_\lambda)=c_0\lambda^{Q-1}$.  By Fubini, we also have $|R_\lambda| = c_0 \lambda^{Q-1}$.

By construction, the backwards projection of $R_\lambda$ is $\bar{B}_\lambda$.  We first claim that the backwards projection of a sufficiently small neighborhood of $R_{\lambda/4C_0}$ lies in $\bar{B}_{\lambda/2}$.

\begin{lemma} \label{l:claim-0}
  There exists $K_0 > 0$ so that if $N > K_0\lambda^{-r}$, then
  \begin{align}
    \nu(B(R_{\lambda/4C_0},1/100N)) \subseteq \bar{B}_{\lambda/2}. \label{e:rhoy}
  \end{align}
\end{lemma}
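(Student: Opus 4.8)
The claim is that the backwards projection of a small neighborhood of the shrunken tube $R_{\lambda/4C_0}$ is contained in $\bar B_{\lambda/2}$. Since $\nu = \nu_x$ sends a point to the unique intersection of its horizontal $x$-line with $\HH$, the strategy is: (1) bound how far $\nu$ moves a point under a small perturbation of fixed size, and (2) combine this with the fact that $\nu(R_{\lambda/4C_0}) = \bar B_{\lambda/4C_0} \subset B(0, \lambda/4C_0)$ to conclude that after a perturbation of size $1/100N$ we still land inside $B(0,\lambda/2) \cap \HH = \bar B_{\lambda/2}$, provided $N$ is large enough relative to $\lambda$.

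First I would make precise the continuity-type estimate for $\nu$. Given $g \in R_{\lambda/4C_0}$ and $h$ with $d(g,h) \le 1/100N$, I want to control $d(\nu(g), \nu(h))$. Write $g = \nu(g) e^{sx}$ and $h = \nu(h) e^{tx}$ for suitable reals $s,t$; since $g,h \in \overline{R_\lambda}$-type regions, $|s|,|t| \le 2$ say. Then $\nu(g)^{-1}\nu(h) = e^{sx} (g^{-1}h) e^{-tx}$, and since $\pi_x(g^{-1}h)$ is small (at most $d(g,h)$ by the 1-Lipschitz property of $\pi$ in Proposition \ref{p:convex-norm}(ii)), while the $e^{sx}, e^{-tx}$ factors are bounded, I can invoke Lemma \ref{l:drift} (or directly the uniform continuity of the group operations on compact sets, Lemma \ref{l:uniform-equiv}) to get $d(\nu(g),\nu(h)) \le C_1 (1/100N)^{1/r}$ for a constant depending only on $G$ and the bounded range of $s,t$. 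Here is the key point: the drift from $g^{-1}h$ being small propagates under the bounded left/right translations $e^{sx}, e^{-tx}$ with only a dimensional loss of a root, exactly as in Lemma \ref{l:drift}.

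Next, using the quasi-triangle inequality \eqref{e:quasi-triangle}: for $g \in R_{\lambda/4C_0}$ we have $d(0, \nu(g)) \le \lambda/4C_0$ (this is the defining property $\nu(R_{\lambda/4C_0}) = \bar B_{\lambda/4C_0}$), hence
\[
d(0, \nu(h)) \le C_0\big(d(0,\nu(g)) + d(\nu(g),\nu(h))\big) \le C_0\Big(\frac{\lambda}{4C_0} + C_1 (100N)^{-1/r}\Big) = \frac{\lambda}{4} + C_0 C_1 (100N)^{-1/r}.
\]
We want this to be at most $\lambda/2$, i.e. $C_0 C_1 (100N)^{-1/r} \le \lambda/4$, i.e. $N \ge (4C_0C_1/ \lambda)^r / 100 =: K_0 \lambda^{-r}$ for an appropriate $K_0$ depending only on $G$ and its metric. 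Since $\nu(h) \in \HH$ automatically, $d(0,\nu(h)) \le \lambda/2$ gives $\nu(h) \in \bar B_{\lambda/2}$, which is exactly \eqref{e:rhoy}.

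**Main obstacle.** The only real subtlety is getting the perturbation estimate for $\nu$ with the right form — namely that a perturbation of size $1/100N$ in $G$ produces a perturbation of $\nu$ of size $\lesssim (1/100N)^{1/r}$, uniformly over $g$ ranging in the (unbounded-in-$\lambda$) tube region. One must be careful that the bound depends only on the fixed bounded range of the $x$-coordinates ($t \in [0,1]$ plus a small neighborhood) and not on $\lambda$ or the horizontal position $\nu(g)$; this is where left-invariance of the domain metric and the homogeneity of the norm are used to reduce to a compact situation, and where Lemma \ref{l:drift} does the heavy lifting. Everything else is a direct application of the quasi-triangle inequality and a choice of $K_0$.
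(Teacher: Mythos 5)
Your proof is correct and follows essentially the same route as the paper's: control the displacement of $\nu$ under a small perturbation via Lemma \ref{l:drift}, then combine with the quasi-triangle inequality and choose $K_0$ so that the drift-induced error $\lesssim (1/100N)^{1/r}$ is dominated by $\lambda/4$. The paper packages the drift step slightly differently---it first slides $z$ to $z' = ze^{\eta x}$ so that $\pi_x(z') = \pi_x(y)$ and then right-translates both points by the same $e^{-\zeta x}$ to land in $\HH$, whereas you write $\nu(g)^{-1}\nu(h) = e^{sx}(g^{-1}h)e^{-tx}$ with two translation amounts and absorb the $|s-t| \le 1/100N$ discrepancy separately---but these are the same estimate expressed two ways.
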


\begin{proof}
  Let $z \in R_{\lambda/4C_0}$, $y \in B(z,1/100N)$, and $\eta \in [-1/100N,1/100N]$ be so that if $z' = z e^{\eta x}$, then $y^{-1} z' \in G^\perp$.  Then by the quasi-triangle inequality, we have that
  \begin{align*}
    d(z',y) \leq C_0(d(z',z) + d(z,y)) \leq c_0 N^{-1}.
  \end{align*}
  Multiplying $z'$ and $y$ by some $e^{-\zeta x}$ so that $\zeta \in (0,2)$ and $z'e^{-\zeta x},ye^{-\zeta x} \in G^\perp$, we get from Lemma \ref{l:drift} and the fact that $N > K_0 \lambda^{-r}$ that
  \begin{multline*}
    d(ye^{-\zeta x},0) \leq C_0(d(ye^{-\zeta x},z'e^{-\zeta x}) + d(z'e^{-\zeta,x},0)) \leq C_0(C_1(c_0 N^{-1})^{1/r} + \lambda/4C_0) \\
    \leq C_0(c_0 K_0^{-1/r} \lambda + \lambda/4C_0).
  \end{multline*}
  Thus, by taking $K_0$ sufficiently large, we get \eqref{e:rhoy}.
\end{proof}


\begin{defi}
  We say that a pair of points $g_1, g_2 \in G$ is $A$-stretched by a map $f:G \to G$ if 
  $$d(f(g_1), f(g_2)) \geq A d(g_1,g_2).$$
\end{defi}

For any $\lambda, N > 0$, we can define an alternating function
\begin{align*}
  \rho_{\lambda,N} : R_\lambda &\to \{1,1+c\} \\
  z &\mapsto \begin{cases}
    1, & z \in R^i_\lambda, ~i \text{ is odd}, \\
    1 + c, & \text{otherwise}.
  \end{cases}
\end{align*}

We now show that a bi-Lipschitz function whose Jacobian oscillates no less than $\rho_{\lambda,N}$ on $R_\lambda$ must quantitatively stretch some horizontal pair of points more than how the endpoints of the tube are stretched.  See Lemma 3.2 of \cite{BK} for the analogous statement.

\begin{lemma} \label{l:stretch-incr-1}
  Let $L \geq 1$ and $c > 0$.  There exist $k >0, \lambda_0 > 0$, $\mu > 0$, and $K > K_0$ such that if $\lambda \in (0,\lambda_0)$, $N > K\lambda^{-r}$ and $f : R_\lambda \to G$ is a $L$-bi-Lipschitz map whose Jacobian $\rho$ satisfies
  \begin{align*}
    |\rho_{\lambda,N}^{-1}(1) \backslash \rho^{-1}([0,1])| &< \mu N^{-Q}, \\
    |\rho_{\lambda,N}^{-1}(1+c) \backslash \rho^{-1}([c,\infty))| &< \mu N^{-Q},
  \end{align*}
  then the following property holds: if $0, e^x\in G$ are $A$-stretched by $f$, then there exists some $\eta > 1/2$ and $g,ge^{\eta x/N}$ both in $R_{\lambda/2}$ that are $(1+k)A$-stretched.
\end{lemma}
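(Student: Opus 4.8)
The plan is to treat two cases according to how far the endpoint displacement $f(0)^{-1}f(e^x)$ is from being horizontal: when it is far from horizontal the required stretched pair is produced directly from the $p$-convexity \eqref{e:p-convex} of the target metric, and only in the nearly-horizontal case do we run the argument of \cite{BK}. Write $\Delta:=\NH(f(0)^{-1}f(e^x))$ and fix a small threshold $\delta_0>0$ to be pinned down at the end; since $d(0,e^x)=|x|=1$ we have $A\in[L^{-1},L]$. We shrink $\lambda_0$ so that $\lambda$ is small relative to $L^{-1},c,\delta_0$ and take $K\ge K_0$, so that in particular each slice $R^i_\lambda$ has $x$-width $1/N<\lambda^r$, negligibly small compared with the transverse scale $\lambda$, and Lemma~\ref{l:claim-0} is available.

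\emph{Non-horizontal endpoints: $\Delta\ge\delta_0 A$.} The Jacobian hypothesis plays no role here. Let $p_j:=e^{(j/N)x}$ for $j=0,\dots,N$ be the points cutting the central axis of $R_\lambda$ into the slices, so $p_0=0$, $p_N=e^x$, $d(p_{j-1},p_j)=1/N$ and $d(f(p_0),f(p_N))=A$. Applying \eqref{e:p-convex} along a dyadic bisection of the chain $f(p_0),\dots,f(p_N)$ and retaining only the defect term $C\,\NH(f(p_0)^{-1}f(p_N))^p=C\Delta^p$ contributed at the coarsest scale (the finer-scale defect terms are nonnegative and simply discarded; for non-dyadic $N$ one groups the few leftover slices in the routine way) one obtains
\begin{align*}
  \sum_{j=1}^{N} d(f(p_{j-1}),f(p_j))^p\ \ge\ N^{1-p}\bigl(A^p+c_0\Delta^p\bigr)\ \ge\ N^{1-p}A^p\bigl(1+c_0\delta_0^p\bigr).
\end{align*}
Hence some consecutive pair has $d(f(p_{j-1}),f(p_j))\ge \tfrac{A}{N}(1+c_0\delta_0^p)^{1/p}$, and since $p_{j-1}$ and $p_j=p_{j-1}e^{x/N}$ lie on the central axis of $R_{\lambda/2}$ this is the desired pair (with $\eta=1$), provided the final choice of $k$ satisfies $1+k\le(1+c_0\delta_0^p)^{1/p}$.

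\emph{Nearly-horizontal endpoints: $\Delta<\delta_0 A$.} This is the technical heart, and here we follow \cite{BK}. Assume for contradiction that no pair $g,ge^{\eta x/N}$ with $\eta>1/2$ and both points in $R_{\lambda/2}$ is $(1+k)A$-stretched. Put $v:=\pi(f(0)^{-1}f(e^x))$, $\hat v:=v/|v|$; by \eqref{e:NH-gtr} of Lemma~\ref{l:NH-gtr}, $|v|\ge(1-\epsilon_0)A$ with $\epsilon_0=\epsilon_0(\delta_0)\to0$ as $\delta_0\to0$. Because $\pi_{\hat v}:G\to\R$ is $1$-Lipschitz and $\R$ obeys the ordinary triangle inequality, the no-stretch assumption forces, for every $b\in\bar B_{\lambda/2}$, that the $f$-image of the horizontal segment $\{be^{tx}:t\in[0,1]\}$ has $\pi_{\hat v}$-extent strictly less than $(1+k)A$, while \eqref{e:NH-less} shows each of its $N$ slice-arcs is within $o_N(1)$ of horizontal; feeding this into Lemma~\ref{l:drift} on a single slice confines each such image arc to an $o_N(1)$-neighborhood of one $\hat v$-horizontal line, and its $\nu_{\hat v}$-shadow to an $o_N(1)$-ball about $\nu_{\hat v}(f(b))$. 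Meanwhile the two hypotheses on $\rho$ say $\rho\ge\rho_{\lambda,N}$ off a set of measure $<\mu N^{-Q}$, which, since each $R^i_\lambda$ has volume $c_0\lambda^{Q-1}/N$ and $N>K\lambda^{-r}$, is negligible on all but a vanishing fraction of slices once $\mu$ is small; thus $\int_{R_{\lambda/2}}\rho\ge(1+c/2-o(1))\cL^n(R_{\lambda/2})$, i.e.\ $f$ inflates the volume of $R_{\lambda/2}$ by a definite amount. Combining the confinement of the image arcs with this volume inflation --- the former saying that the images of the horizontal lines of $R_{\lambda/2}$ are packed into a thin, nearly straight tube and so, on average over the lines (an average, replacing the pigeonhole of \cite{BK}, that is available because the local behavior of the Lipschitz map $f$ propagates by uniform continuity), cannot be stretched by much more than $A$; the latter saying that they must on average be stretched by at least a definite power of $1+c/2$ times $A$ --- and choosing first $\delta_0$ small (so $\epsilon_0$ is small), then $k$ small (depending on $c$ and $\delta_0$), then $\mu,\lambda_0$ small and $K$ large, produces a contradiction; together with the first case this proves the lemma.

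The step I expect to be the main obstacle is precisely the confinement/averaging in the last paragraph, and specifically the control of transverse drift in the nonabelian setting. In an abelian group parallel horizontal segments never drift, so the image of the tube is automatically thin and this step is essentially free; in a nonabelian Carnot group Lemma~\ref{l:drift} only bounds drift by $\lesssim\lambda^{1/r}$, which is far too crude at the scale $1/N$ of a single slice, so one is forced to take $N\gg\lambda^{-r}$, to work with extremely thin sub-tubes obtained by gridding $\bar B_{\lambda/2}$ at a scale much finer than $\lambda$, and to average over the full family of horizontal lines rather than over a finite subfamily. Verifying that the drift errors accumulated across the $N$ slices and across the grid are genuinely $o(1)$, rather than growing with $N$, is the delicate point; this, and making the bookkeeping of the chain of small constants $\delta_0,k,\mu,\lambda_0,K$ consistent, is where the real work lies.
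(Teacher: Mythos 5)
Your split into a non-horizontal case (handled by $p$-convexity) and a nearly-horizontal case (handled by a BK-style argument) is exactly the paper's decomposition. Your non-horizontal case is correct: you apply \eqref{e:p-convex} along a full dyadic bisection of the chain $p_0,\dots,p_N$, while the paper applies it once to the midpoint $e^{x/2}$ and notes that the stretched pair $(0,e^{x/2})$ (or $(e^{x/2},e^x)$) already has the form $g, ge^{\eta x/N}$ with $\eta = N/2$; both routes give the needed stretched pair, the paper's being shorter.

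The nearly-horizontal case has a genuine gap, centered on the sentence claiming that the no-stretch hypothesis together with \eqref{e:NH-less} ``shows each of its $N$ slice-arcs is within $o_N(1)$ of horizontal.'' That is false as stated. The no-stretch assumption gives only the \emph{upper} bound $\|W_g\| \leq (1+k)A/N$ on each slice displacement $W_g = f(g)^{-1}f(ge^{x/N})$. To invoke \eqref{e:NH-less} you also need a matching \emph{lower} bound on $|\pi(W_g)|/\|W_g\|$, i.e.\ that the slice-arc moves forward in the $\hat v$ direction by nearly the full allowed amount. Nothing you have written supplies that lower bound; a priori a slice-arc could wander sideways or backward, and on such an arc $\NH(W_g)$ need not be small. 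This is precisely what the paper's notion of an $(\ell,A)$-\emph{regular} point encodes ($\pi_y(f(ge^{x/N})) - \pi_y(f(g)) > (1-\ell)A/N$), and the entire content of the paper's Claim~\ref{c:claim-1} is an averaging argument (over the shadows $\nu(B^i)$ in $\bar B_{\lambda/2}$, followed by chaining the triangle inequality along a horizontal line and contradicting \eqref{e:NH-gtr}) to show that at least one small ball near the center of some slice is regular. Your sketch silently assumes this everywhere, which is why the ``confinement'' step feels free; in fact it is the hard part. A second, smaller, gap: the Jacobian hypothesis gives \emph{volume} inflation across a pair of adjacent slices ($\rho_{\lambda,N}$ jumps from $1$ to $1+c$ across $B \to Be^{x/N}$), and converting that into a \emph{metric} stretch on a horizontal pair is exactly what the paper's Claims~\ref{c:claim-2}--\ref{c:claim-3} do (regularity forces $W_g \approx W$, so $|f(B)|\approx|f(Be^{x/N})|$, contradicting the jump); your final sentence asserts the volume-to-stretch conversion without an argument, and your proposed contradiction compares an average over $R_{\lambda/2}$ rather than a single regular ball against its $e^{x/N}$-translate. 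Both points need to be filled in before the case closes.
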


\begin{proof}
Let $\rho$ be the Jacobian of the bi-Lipschitz map $f : R_\lambda \to G$.  Suppose no such pairs of points in $R_{\lambda/2}$ are $(1+k)A$-stretched for some $k > 0$ to be determined.  Without loss of generality we can assume that $f(0)=0$.  Let $y = \pi(f(e^x))$.

We say a point $g\in R_\lambda$ is $(\ell,A)$-regular if
$$\pi_y(f(ge^{\frac{1}{N}x})) - \pi_y(f(g)) >(1- \ell)\frac{A}{N}.$$
We say that a ball is \emph{irregular} if some point inside that ball is not regular.

Let $R'$ consists of the points in $R_{\lambda/4C_0}$ that lie near the center of each slice $R_\lambda^i$: 
$$R' = \bigcup_{i=0}^{N-2} \left\{ g \in R_{\lambda/4C_0}^i : |\pi_x(g)| \in \left(\frac{4i+1}{4N},\frac{4i+3}{4N} \right) \right\}.$$
Let $\cN$ be a $1/10N$-net of $R'$.

The proof will now follow from a series of claims inspired by Burago-Kleiner. 

\begin{claim} \label{c:claim-1}
  There exists $\tau_1 = \tau_1(\ell,L)$, $k_1=k_1(\ell,L)$, and $\lambda_1=\lambda_1(\ell,L)$ such that if $\lambda < \lambda_1$, $k<k_1$, $N > K_0 \lambda^{-r}$, and
  \begin{align*}
    \frac{\NH(f(e^x))}{\|f(e^x)\|} < \tau,
  \end{align*}
  then there exists a $1/100N$-ball centered at a point of $\cN$ that is regular.
\end{claim}

\begin{proof}
Assume all $1/100N$-balls around points of $\cN$ are irregular, so that there is some point (choosing one arbitrarily for each point of $\cN$) inside the ball that is not $(\ell, A)$-regular.  For $s = (\ell/100C_1L^2)^r$, consider an $s/N$-ball $B^i$ around each one of these chosen irregular points and call this a \emph{bad} ball.  Note from the triangle inequality that bad balls are all pairwise disjoint.

Let $I$ denote the index set of all bad balls $B^i = B(z,s/N)$.  As each ball of $I$ is uniquely associated to a point of a $1/10N$-net of $R'$, which has volume $|R'| \geq c_0 \lambda^{Q-1}$, we get that $\# I = \# \cN \geq c_0N^Q \lambda^{Q-1}$.

Define a function $\phi: \bar{B}_{\lambda/2} \to \R$
$$\phi(g)= \sum_{i \in I} \chi_{\nu(B^i)}(g)$$
By Lemma \ref{l:claim-0}, we have that each $\chi_{\nu(B^i)}$ is indeed supported in $\bar{B}_{\lambda/2}$.  Then since $\#I \geq c_0 N^Q \lambda^{Q-1}$, $\cL^{n-1}(\nu(B^i)) = c_1 (s/N)^{Q-1}$ and $\cL^{n-1}(\bar{B}_{\lambda/2}) \leq c_0 \lambda^{Q-1}$ we must have for some $g_0 \in \bar{B}_{\lambda/2}$ that 

$$\phi(g_0) \geq  \fint_{\bar{B}_\lambda} \phi(g)\ dg \geq c_0 s^{Q-1} N.$$ 

Let $\gamma = g_0 \exp([0,1]x)$ be the horizontal line segment.

We see that there must be at least $c_0 s^{Q-1}N$ cells $Q_i = R_\lambda^i \cap R'$ so that some point of $Q_i \cap \gamma$ is contained in a bad ball.  In particular, we may suppose there are more than $c_2 N$ of these cells $Q_i$ with (say) $i$ even where $c_2$ depends on $\ell$ and $L$.

Next we split segment $\gamma$ into subsegments given by the points
$$g_0,g_0e^{t_0x},g_0e^{t_1x},...,g_0e^{t_{N-1}x},g_0e^x$$
where
\begin{enumerate}[(i)]
  \item $t_i \in \left( \frac{4i+1}{4N}, \frac{4i+3}{4N} \right)$, \label{e:center}
  \item if possible choose $t_{2i}$ so that $g_0 e^{t_{2i}}$ is in a bad ball, in which case choose $t_{2i+1} = t_{2i}+ \frac{1}{N}$. \label{e:even}
\end{enumerate}
By the discussion above, \eqref{e:even} happens at least $c_2 N$ times.

Suppose $g_0e^{t_{2i}x}$ is in a bad ball.  Then it is within distance $s/N$ of an irregular point. Let $g_1=g_0e^{t_{2i}x}$ and $g_1'$ be the irregular point at distance $s/N$ away and let $g_2,g_2'$ be their right translates by $e^{\frac{1}{N}x}$.  Lemma \ref{l:drift} implies that since $d(\delta_N(g_1), \delta_N(g_1'))\leq s$ then $$d(\delta_N(g_1)e^x, \delta_N(g_1')e^x)\leq s^{1/r}$$ which implies that $$d(g_1e^{\frac{1}{N} x}, g_1'e^{\frac{1}{N}x}) =d(g_2, g_2')\leq C_1 \frac{s^{1/r}}{N}.$$  Remembering $s = (\ell /100C_1L^2)^r$, we get
\begin{multline}
d_{\pi_y}(f(g_1),f(g_2)) \leq d_{\pi_y}(f(g_1),f(g_1')) + d_{\pi_y}(f(g_1'), f(g_2')) + d_{\pi_y}(f(g_2), f(g_2')) \\
\leq \frac{Ls}{N} + (1- \ell) \frac{A}{N} + \frac{C_1Ls^{1/r}}{N} \leq \left(1 - \frac{\ell}{2} \right) \frac{A}{N}. \label{e:l/2-compress}
\end{multline}
In the last inequality, we used the fact that $L^{-1} \leq A$.

By \eqref{e:center}, one has that $|t_{i+1}-t_i| > \frac{1}{2N}$ for all $i$ and so we have by initial assumption of the lemma that
\begin{align}
  d(f(g_0e^{t_ix}),f(g_0e^{t_{i+1}x})) \leq (1+k)A (t_{i+1} - t_i), \qquad \forall i. \label{e:no-kstretch}
\end{align}
Finally, we have
\begin{align}
  \max\{d(f(g_0),f(g_0e^{t_1x})),d(f(g_0e^{t_Nx}),f(g_0e^x))\} \leq \frac{L}{N} \leq L \lambda, \label{e:endedges}
\end{align}
where we used the fact that $N > K\lambda^{-r} > \lambda^{-1}$ (for sufficiently small $\lambda$).

We now use another series of segments defined by the points
$$0, g_0, g_0e^{t_1x}, \ldots, g_0 e^{t_{N_1}x},g_0e^{x}, e^{x}$$
to estimate the distance between the projections of $f(0)=0$ and $f(e^x)$. 
Again by Lemma \ref{l:drift} we have that since $d(0,g_0)\leq \lambda $ then  $d(e^x,g_0e^x) \leq C_1 \lambda^{1/r}$, and so
\begin{align}
  \max\{d(f(0),f(g_0)),d(f(e^x),f(g_0e^x))\} \leq C_1 L \lambda^{1/r}. \label{e:endpoints}
\end{align}

Remembering that there are at least $c_2 N$ even indices $i$ so that $g_0e^{t_ix}$ is in a bad ball, we get the following upper bound:
\begin{align*}
  |\pi_y(f(e^x))| \overset{\eqref{e:l/2-compress} \wedge \eqref{e:no-kstretch} \wedge \eqref{e:endedges} \wedge \eqref{e:endpoints}}{\leq} c_2 N\left( 1- \frac{\ell}{2} \right) \frac{A}{N} + \left(1- c_2 \right)(1+k) A + 4C_1 L \lambda^{1/r}.
\end{align*}
Note that we do not suffer a quasi-triangle inequality loss as $(g,h) \mapsto |\pi_y(g) - \pi_y(h)|$ satisfies the triangle inequality.

By taking $k$ and $\lambda$ sufficiently small depending on $\ell$ and $L$ (remember $c_2$ depends also on $\ell,L$), we get
\begin{align*}
  |\pi_y(f(e^x))| \leq \left(1 - \frac{\ell}{4}\right) A.
\end{align*}

On the other hand, by choosing $\tau$ in the hypothesis small enough depending on $\ell$, we can get from \eqref{e:NH-gtr} that
\begin{align*}
  |\pi_y(f(e^x))| > \left(1- \frac{\ell}{4} \right) \|f(e^x)\| \geq \left(1- \frac{\ell}{4} \right) A,
\end{align*}
a contradiction.
\end{proof}

Let $W= \exp(\pi(f(e^x))/N)$.
For any point $g$ let $W_g:= f(g)^{-1}f(ge^{\frac{1}{N}x})$. 

\begin{claim} \label{c:claim-2}
Given any $m>0$ there is an $\ell_2=\ell_2(m)>0$ such that if $\ell \leq \ell_2$ and $k < \ell$ then for every $(\ell, A)$-regular point $g \in R_{\lambda/2}$ so that $ge^{x/N} \in R_\lambda$ we have $d(W, W_g) < \frac{m}{N}$.
\end{claim}

\begin{proof}
  Remember $y = \pi(f(e^x))$.  We have that
  \begin{align*}
    |\pi(W_g)| &\geq |\pi_y(W_g)| > \frac{(1-\ell)A}{N}, \\
    \|W_g\| &\leq \frac{(1+k)A}{N} \leq \frac{(1+\ell)A}{N}.
  \end{align*}
  The second inequality comes from the assumption that $g,ge^{x/N} \in R_{\lambda/2}$ are not $(1+k)A$-stretched.  Thus, by \eqref{e:NH-less}, if we take $\ell$ small enough, we get
  \begin{align*}
    NH(W_g) < \frac{m}{2C_0L(1+\ell)} \|W_g\| \leq \frac{m}{2C_0N}.
  \end{align*}
  Note that $W = \exp(\pi(W))$.  As $N \pi(W)$ and $N \pi(W_g)$ both lie in a ball of radius $L$, we have by Lemma \ref{l:uniform-equiv} that
  \begin{align*}
    Nd(W,\exp(\pi(W_g))) = d(\exp(N \pi(W)),\exp(N\pi(W_g))) < \frac{m}{2C_0}
  \end{align*}
  if
  \begin{align}
    |\pi(W) - \pi(W_g)| < \frac{m'}{N} \label{e:small-dev}
  \end{align}
  for some sufficiently small $m'$ depending only on $m$.  As $W_g$ is $\ell$-regular, we get that the projection of $\pi(W_g)$ onto $\pi(W) = y/N$ is at least $(1-\ell)\frac{A}{N}$.  On the other hand,
  \begin{align*}
    |\pi(W_g)| \leq \|W_g\| \leq \frac{(1+k)A}{N}.
  \end{align*}
  Thus, we get from Claim 2 of \cite{BK} that \eqref{e:small-dev} is possible if we take $\ell$ sufficiently small depending on $m'$.

  We then have that by quasi-triangle inequality that
  \begin{align*}
    d(W,W_g) \leq C_0(d(W,\exp(\pi(W_g))) + NH(W_g)) < C_0\left(\frac{m}{2C_0N} + \frac{m}{2C_0N} \right) = \frac{m}{N}
  \end{align*}
\end{proof}

\begin{claim} \label{c:claim-3}
  There exist $m_3 = m_3(c,L) >0$ such that if $m<m_3$ and $S$ is a $1/100N$-ball so that $d(W,W_g)< \frac{m}{N}$ for every point $g \in S$, then for $S' := S \cdot e^{x/N}$ we have
  $$\left||f(S)| - |f(S')| \right| <\frac{c}{2} |S|.$$ 
\end{claim}

\begin{proof}
  Let $S = B(z,1/100N)$, $Q = f(S)$, $R = f(S')$, and $\tilde{R} = Q \cdot W$.  By translation invariance of the measure, we have that $|\tilde{R}| = |Q|$.

  Let $g \in \partial S$.  Then by hypothesis, we have that $d(f(g) \cdot W, f(ge^{x/N})) < m/N$ and $g \cdot e^{x/N} \in \partial S'$.  We thus have a bound on the Hausdorff distance
  \begin{align*}
    d_{\text{Haus}}(\partial \tilde{R}, \partial R) \leq m/N.
  \end{align*}
  Thus, it suffices to calculate the volume $|B(\partial R,m/N)|$.

  Consider a $m/LN$-net $\mathcal{M}$ of $\mathcal{A} = (B(z,1/100N) \backslash B(z,1/100N - m/LN)) e^{x/N}$.  This is a thickened interior edge of $S'$.  As $|B(x,r)| = c_0r^Q$, we have that $|\mathcal{A}| \leq c_0 \frac{m}{L} N^{-Q}$, and so $\#\mathcal{M} \leq c_0 (m/L)^{1-Q}$.  As $f$ is $L$-bi-Lipschitz,
  $$f(\partial S') \subseteq B(f(\mathcal{M}),m/N).$$
  By the quasi-triangle inequality, we get
  $$B(\partial R,m/N) = B(f(\partial S'),m/N) \subseteq B(f(\mathcal{M}),2C_0m/N),$$
  and so
  \begin{align*}
    |B(\partial R,m/N)| \leq \# \mathcal{M} \cdot c_0 \left( \frac{2C_0m}{N} \right)^Q \leq c_0 \frac{mL^{Q-1}}{N^Q}.
  \end{align*}
  Thus, by taking $m$ small enough depending on $c$ and $L$ we get that
  \begin{align*}
    \left| |f(S')| - |f(S)| \right| = \left| |R| - |\tilde{R}| \right| < \frac{c}{2} |S|.
  \end{align*}
  Here, we used the fact that $|S|$ is comparable to $N^{-Q}$.
\end{proof}

Given $c,L > 0$, we use Claim \ref{c:claim-3} to get $m_3 > 0$.  We then use Claim \ref{c:claim-2} to get $\ell_2$ from $m_3$ and finally Claim \ref{c:claim-1} to get $\tau$, $\lambda_1$, and $k_1$ from $\ell_2$.  Now choose any $\lambda < \lambda_1$ and $N > K\lambda^{-r}$ to construct $\rho_{\lambda,N}$.  We may choose $\tau$ sufficiently small (depending only on $p$) so that
\begin{align}
  (1 + 2^pC\tau^p)^{1/p} \geq 1 + 2^{p-1}Cp^{-1} \tau^p, \label{e:taylor}
\end{align}
where $C$ is the constant from Proposition \ref{p:convex-norm}.  We now fix $k = \min(2^{p-1}Cp^{-1} \tau^p, k_1)$.

{\bf Case 1:} Suppose $\NH(f(e^x))/\|f(e^x)\| < \tau$.

  By Claim \ref{c:claim-1}, there is a $1/100N$-ball $B$ around a point of $\cN$ that is regular.  By definition of $R'$, there is some $i \in \{0,...,N-1\}$ so that $B \subset R_\lambda^i$, which means $B e^{x/N} \subset R_\lambda^{i+1}$.  Thus, $\rho_{\lambda,L}$ take different valued constants on $B$ and $Be^{x/N}$ (we may suppose without loss of generality that $\rho_{\lambda,N} = 1$ on $B$).  Then as $\rho^{-1}([0,1])$ differs from $\rho_{\lambda,N}^{-1}(1)$ on a set of measure no more than $\mu N^{-Q}$, we get that
  \begin{align*}
    |f(B)| &\leq |B| - \mu N^{-Q} + L^Q \mu N^{-Q}.
  \end{align*}
  Similarly, we have that
  \begin{align*}
    |f(Be^{x/N})| &\geq (1+c)(|B| - \mu N^{-Q}).
  \end{align*}
  As $|B|$ is comparable to $N^{-Q}$, we get from choosing $\mu$ to be sufficiently small that
  \begin{align*}
    |f(Be^{x/N})| - |f(B)| \geq c |B| - (c+L^Q)\mu N^{-Q} \geq \frac{c}{2} |B|.
  \end{align*}
  On the other hand, by Lemma \ref{l:claim-0}, $B$ (and so also $Be^{x/N}$) is contained in $R_{\lambda/2}$.  Then a combination of Claim \ref{c:claim-2} and Claim \ref{c:claim-3} gives us that
  \begin{align*}
    \left||f(B)| - |f(Be^{x/N})| \right| < \frac{c}{2} |B|,
  \end{align*}
  a contradiction.

{\bf Case 2:} Suppose $\NH(f(e^x))/\|(f(e^x)\| \geq \tau$.

  Consider the points $f(0),f(e^{x/2}),f(e^x) \in G$.  By the hypothesis of the current case, we have that
  \begin{align*}
    \frac{d(f(0),f(e^{x/2}))^p + d(f(e^{x/2}),f(e^x))^p}{2} - \left( \frac{d(f(0),f(e^x))}{2} \right)^p \overset{\eqref{e:p-convex}}{\geq} C \tau^p d(f(0),f(e^x))^p.
  \end{align*}
  Multiplying by $2^p$ on both sides, we get
  \begin{align*}
    \frac{(2d(f(0),f(e^{x/2})))^p + (2d(f(e^{x/2}),f(e^x)))^p}{2} &\geq d(f(0),f(e^x))^p + 2^p C \tau^p d(f(0),f(e^x))^p \\
    &\geq (1+2^pC\tau^p) A^p.
  \end{align*}
  In the second inequality, we used that $d(f(0),f(e^x)) \geq A d(0,e^x) = A$.  In particular, one of the two terms in the numerator is larger than the right hand side.  Without loss of generality, suppose it is the first term.  Then we get
  \begin{multline*}
    d(f(0),f(e^{x/2})) \geq (1 + 2^pC\tau^p)^{1/p}A d(0,e^{x/2}) \overset{\eqref{e:taylor}}{\geq} (1 + 2^{p-1}Cp^{-1} \tau^p) A d(0,e^{x/2}) \\
    \geq (1+k) A d(0,e^{x/2}).
  \end{multline*}
  Here, we used the fact that $d(0,e^{x/2}) = 1/2$.  This contradicts the initial assumption of the lemma.
\end{proof}

The following lemma is obvious.

\begin{lemma} \label{l:dense-segments}
  For any $\lambda > 0$, $\eta > 0$, and $\delta > 0$, there exists a finite collection of disjoint horizontal line segments $\overline{l_jr_j} \subset R_\lambda$ in the direction of $x$ so that for any $g,ge^{sx} \subset R_\lambda$ where $s > \eta$, there exists a horizontal segment $\overline{l_jr_j}$ so that $d(l_j,g) < \delta$ and $d(r_j,ge^{\eta x}) < \delta$.
\end{lemma}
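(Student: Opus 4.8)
The plan is to build the segments as gently perturbed pieces of the horizontal lines through a fine net of base points in $\HH$; everything is soft except the \emph{disjointness} requirement, which I would arrange by perturbing base points generically inside the positive-dimensional set $\HH$.

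\textbf{Reduction.} We may assume $0 < \eta < 1$, since otherwise no pair $g, ge^{sx} \subset R_\lambda$ has $s > \eta$ and the empty collection works. Every horizontal segment in $R_\lambda$ in the direction of $x$ has the form $\{b e^{\tau x} : \tau \in [t_0, t_0+\eta]\}$ for a unique $b \in \HH$, and it lies in $R_\lambda$ exactly when $b \in \bar{B}_\lambda$ and $[t_0,t_0+\eta] \subseteq [0,1]$, i.e.\ $t_0 \in [0,1-\eta]$. Likewise any admissible pair is $g = be^{tx}$, $ge^{sx} = be^{(t+s)x}$ with $b \in \bar{B}_\lambda$ and (using $s > \eta$ and $t+s \le 1$) $t \in [0,1-\eta]$, and then $ge^{\eta x} = be^{(t+\eta)x}$. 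So it suffices to exhibit finitely many \emph{distinct} points $b_1,\dots,b_M \in \bar{B}_\lambda$ and heights $t_1,\dots,t_M \in [0,1-\eta]$ such that the segments $\sigma_j = \{b_j e^{\tau x} : \tau \in [t_j,t_j+\eta]\}$ approximate every admissible pair in the required sense; note such segments are automatically pairwise disjoint once the $b_j$ are distinct, because distinct points of $\HH$ lie on distinct (hence disjoint) horizontal lines in the direction of $x$ (each such line meets $\HH$ in exactly one point).

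\textbf{The net and the continuity estimate.} Since $G \neq \R$, the subspace $\HH$ has positive dimension and $\bar{B}_\lambda$ has nonempty interior in $\HH$. Using left-invariance and Lemma \ref{l:drift} I would first fix a threshold $\delta' = \delta'(\delta,G) > 0$ with the property that if $b,b' \in \bar{B}_\lambda$ with $d(b,b') < \delta'$ and $t,t' \in [0,1]$ with $|t-t'| < \delta'$, then $d(be^{tx}, b'e^{t'x}) < \delta$: by the (quasi-)triangle inequality split this distance into $d(be^{tx},b'e^{tx})$, which is at most $C_1 d(b,b')^{1/r}$ by Lemma \ref{l:drift} (recall $|x|=1$ and $t \in [0,1]$), and $d(b'e^{tx}, b'e^{t'x}) = \|e^{(t'-t)x}\|$, which is a fixed constant times $|t-t'|$ by homogeneity. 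Then I would fix a finite $\delta'$-net $\{c_1,\dots,c_K\}$ of $\bar{B}_\lambda$ in $\HH$ and a finite $\delta'$-net $\{s_1,\dots,s_P\}$ of $[0,1-\eta]$; for any admissible pair $g = be^{tx}$, $ge^{sx}$ there is a pair $(i,p)$ with $d(c_i,b) < \delta'$ and $|s_p - t| < \delta'$.

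\textbf{Disjointness by perturbation, and conclusion.} The $KP$ candidate segments may overlap, but only when they share a base point $c_i$. To repair this, enumerate the pairs $(i,p)$ and, for the pair indexed $j$, replace $c_i$ by a point $b_j \in \bar{B}_\lambda$ with $d(b_j, c_i) < \delta'$, chosen so that $b_1,\dots,b_{KP}$ are pairwise distinct; this is always possible because the set of points of $\bar{B}_\lambda$ within $\delta'$ of $c_i$ contains interior points of $\bar{B}_\lambda$ in $\HH$, so a fresh choice is available at each step. Put $t_j := s_p$. Then the $b_j$ are distinct, so the $\sigma_j$ are pairwise disjoint. Finally, given an admissible pair $g = be^{tx}$, $ge^{sx} \in R_\lambda$, take the index $j$ coming from a net pair $(c_i,s_p)$ with $d(c_i,b) < \delta'$, $|s_p-t| < \delta'$; then $d(b_j,b) < 2\delta'$ and $|t_j - t| < \delta'$, so, shrinking $\delta'$ at the outset by a harmless $G$-dependent factor, the estimate of the previous paragraph gives $d(l_j, g) = d(b_j e^{t_j x}, b e^{tx}) < \delta$ and $d(r_j, ge^{\eta x}) = d(b_j e^{(t_j+\eta)x}, b e^{(t+\eta)x}) < \delta$ (here $t_j + \eta, t+\eta \in [0,1]$, so Lemma \ref{l:drift} applies again). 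The only genuine obstacle is reconciling disjointness with density; it disappears once one notices that base points may be perturbed freely within the positive-dimensional set $\HH$, which is exactly where $G \neq \R$ enters.
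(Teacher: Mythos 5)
Your proof is correct, and it fills in a complete argument for a lemma the paper dismisses as ``obvious'' without proof, so there is no competing argument in the paper to compare it against. The two points that need any thought are both handled properly: the continuity step, where you correctly split $d(b_j e^{t_j x}, b e^{t x})$ into a ``drift'' term controlled by Lemma~\ref{l:drift} (note the domain carries a genuine left-invariant homogeneous metric, so the plain triangle inequality is available there) and a one-dimensional term along $e^{\R x}$ controlled by homogeneity of the norm; and the disjointness step, where you correctly observe that two horizontal lines $b_1 e^{\R x}$ and $b_2 e^{\R x}$ with $b_1, b_2 \in \HH$, $b_1 \neq b_2$, cannot meet because each such line meets $\HH$ in exactly one point, and then you perturb base points within the positive-dimensional set $\bar{B}_\lambda \subset \HH$ (here $G \neq \R$ is essential) to make the finitely many base points pairwise distinct. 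The only cosmetic slip is the phrase ``Every horizontal segment in $R_\lambda$ in the direction of $x$ has the form $\{b e^{\tau x} : \tau \in [t_0,t_0+\eta]\}$,'' which should read ``of length $\eta$''; this is merely a description of the particular segments you construct and does not affect the argument.
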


\begin{lemma} \label{l:one-step}
  There exist $k' > 0$ such that, for any horizontal line segment $\overline{ab}$ and any neighborhood $\overline{ab} \subset U$, there is a measurable function $\zeta : U \to \{1,1+c\}$, $\epsilon > 0$ and a finite collection of non-intersecting horizontal segments $\overline{l_jr_j} \subset U$ in the direction of $x$ with the following property: if $a,b$ are $A$-stretched by some $L$-bi-Lipschitz map $f : U \to G$ whose Jacobian $\rho$ satisfies
  \begin{align*}
    |\zeta^{-1}(1) \backslash \rho^{-1}([0,1])| &< \epsilon, \\
    |\zeta^{-1}(1+c) \backslash \rho^{-1}([1+c,\infty))| &< \epsilon,
  \end{align*}
  then for some $j$ the pair $l_j, r_j$ is $(1+k')A$-stretched by $f$.
\end{lemma}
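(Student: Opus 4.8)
The plan is to apply Lemma~\ref{l:stretch-incr-1} inside a thin standard tube around $\overline{ab}$ and then to push the stretched pair it produces onto a finite family of horizontal segments fixed in advance via the covering Lemma~\ref{l:dense-segments}, carrying out this last transfer through the uniform perturbation estimate of Lemma~\ref{l:quasi-triangle} rather than the bare quasi-triangle inequality. Concretely: since $A$-stretching and the desired conclusion are unchanged under pre- and post-composition with a similarity of $(G,d)$, a left translation followed by a dilation reduces us to $\overline{ab}=\overline{0,e^x}$, with the map now an $L$-bi-Lipschitz $\hat f:\Phi(U)\to G$ still $A$-stretching $0,e^x$ (this uses that $\overline{ab}$ points in the direction of $x$, the case relevant to the construction of the non-Jacobian; a general horizontal direction is handled identically with $x$ replaced throughout, the output segments then pointing in that direction). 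By Lemma~\ref{l:drift}, $R_\lambda$ lies in the $C_1\lambda^{1/r}$-neighbourhood of $\overline{0,e^x}$, so $R_\lambda\subset\Phi(U)$ for $\lambda$ small. With $L,c$ given, let $k,\lambda_0,\mu,K$ be the constants of Lemma~\ref{l:stretch-incr-1}; fix $\lambda\in(0,\lambda_0)$ with $R_\lambda\subset\Phi(U)$ and then $N>K\lambda^{-r}$, let $\zeta$ be $\rho_{\lambda,N}$ on $R_\lambda$ and $1$ elsewhere, and set $\epsilon:=c_0\mu N^{-Q}$ (the dilation factor absorbed into $c_0$). Since $\zeta$ agrees with $\rho_{\lambda,N}$ on $R_\lambda$ and $\rho^{-1}([1+c,\infty))\subseteq\rho^{-1}([c,\infty))$, the two measure hypotheses of Lemma~\ref{l:one-step} imply those of Lemma~\ref{l:stretch-incr-1} for $\hat f|_{R_\lambda}$, so Lemma~\ref{l:stretch-incr-1} yields $g$ and $ge^{sx}$ in $R_{\lambda/2}$, with $s=\eta/N$, $\tfrac12<\eta$ (hence $s\in(1/2N,1]$, both points lying in $R_{\lambda/2}$), that are $(1+k)A$-stretched by $\hat f$.

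The pair $(g,ge^{sx})$ is only known to lie somewhere in $R_{\lambda/2}$ and to have length in the bounded range $(1/2N,1]$, so I would next build a single finite covering family. Fix a small $\delta>0$, take a $\delta$-net $\sigma_1<\dots<\sigma_M$ of $[1/2N,1]$, apply Lemma~\ref{l:dense-segments} inside $R_{\lambda/2}$ once for each parameter $\sigma_m$ (with mesh $\delta$), and take the union of the resulting families after a generic perturbation placing all their segments on pairwise distinct horizontal lines (so the union is still pairwise non-intersecting). This gives finitely many disjoint horizontal segments $\overline{l_jr_j}\subset R_{\lambda/2}\subset\Phi(U)$ in the direction of $x$ such that for our pair one may choose $\sigma_m$ with $0<s-\sigma_m\le\delta$ and then $j$ with $d(l_j,g)<\delta$ and $d(r_j,ge^{\sigma_m x})<\delta$; since $ge^{\sigma_m x}$ and $ge^{sx}$ lie on a common horizontal line at parameter distance $s-\sigma_m\le\delta$, this yields $d(r_j,ge^{sx})<2\delta$ and, by the quadrilateral inequality, $|d(l_j,r_j)-s|<3\delta$.

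For the transfer, note $d(\hat f(g),\hat f(ge^{sx}))\ge(1+k)As\ge(1+k)(2NL)^{-1}$ (using $A\ge L^{-1}$ and $s>1/2N$), while $d(\hat f(g),\hat f(l_j))\le L\delta$ and $d(\hat f(ge^{sx}),\hat f(r_j))\le 2L\delta$ are, for $\delta$ small, tiny \emph{relative} to $d(\hat f(g),\hat f(ge^{sx}))$. Applying Lemma~\ref{l:quasi-triangle} (in its obvious left-invariant form) once for each endpoint then gives
\begin{align*}
  d(\hat f(l_j),\hat f(r_j))\ \ge\ (1-\omega(\delta))\,d(\hat f(g),\hat f(ge^{sx}))\ \ge\ (1-\omega(\delta))(1+k)As,
\end{align*}
with $\omega(\delta)\to 0$ as $\delta\to 0$ and with \emph{no} quasi-triangle constant appearing, because Lemma~\ref{l:quasi-triangle} estimates small relative perturbations with multiplicative error tending to $1$. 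Combined with $d(l_j,r_j)\le s(1+c_0N\delta)$ this forces $d(\hat f(l_j),\hat f(r_j))\ge(1+\tfrac{k}{2})A\,d(l_j,r_j)$ once $\delta$ is small enough (depending only on $N,L,c$), so $l_j,r_j$ are $(1+k')A$-stretched with $k':=k/2>0$. Undoing the initial similarity (which scales distances and the Jacobian by constants, so transports the family, $\zeta$, $\epsilon$, and the stretching factor unchanged) returns the required data for the original $\overline{ab}\subset U$, with $k'$ depending only on $L$ and $c$ (the fixed parameters of this section).

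The main obstacle is precisely this transfer step. In a Carnot group the target semi-metric obeys only a quasi-triangle inequality with constant $C_0>1$, so a naive two-step comparison of $d(\hat f(l_j),\hat f(r_j))$ with $d(\hat f(g),\hat f(ge^{sx}))$ loses a factor $C_0^{-2}$, which swamps the (typically small) gain $1+k$ coming out of Lemma~\ref{l:stretch-incr-1}. The fix is to take $l_j,r_j$ much closer to $g,ge^{sx}$ than the scale $\sim(NL)^{-1}$ on which stretching happens and use the uniform estimate of Lemma~\ref{l:quasi-triangle} instead; this compels the family segments to (approximately) match the a priori unknown length of the Lemma~\ref{l:stretch-incr-1}-pair, which is why the family is assembled from Lemma~\ref{l:dense-segments} over a finite net of lengths $\{\sigma_m\}$ rather than from a single application. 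The only other point requiring care is the routine bookkeeping of the dilation factor through the Jacobian condition and the constant $\epsilon$.
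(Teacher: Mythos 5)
Your proof is correct and follows essentially the same route as the paper: reduce by left translation and dilation to $\overline{0,e^x}$ (the Jacobian and the measure tolerance transport by a constant multiplicative factor under the similarity), set $\zeta = \rho_{\lambda,N}$ on $R_\lambda$ extended by $1$, take $\epsilon$ proportional to $\mu N^{-Q}$, invoke Lemma~\ref{l:stretch-incr-1} to produce a $(1+k)A$-stretched horizontal pair in $R_{\lambda/2}$, approximate it by one of the pre-chosen horizontal segments, and transfer the stretching onto that segment via two applications of Lemma~\ref{l:quasi-triangle} (so the quasi-triangle constant $C_0$ never enters and the loss tends to $1$ as the approximation scale shrinks), finally taking $k' = k/2$. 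The one place you deviate --- assembling the covering family as a union of applications of Lemma~\ref{l:dense-segments} over a finite net of lengths $\sigma_m \in [1/2N,1]$, rather than a single application --- is a sound and arguably necessary repair: as literally stated, Lemma~\ref{l:dense-segments} produces approximants whose right endpoint is close to $ge^{\eta x}$ for the \emph{fixed} cutoff parameter $\eta$, whereas the paper's proof uses it as if the right endpoint lands near $ge^{sx}$ for the (a priori unknown, varying) parameter $s$ of the stretched pair returned by Lemma~\ref{l:stretch-incr-1}; your net-of-lengths patch (or, equivalently, reading Lemma~\ref{l:dense-segments} as asserting $d(r_j,ge^{sx})<\delta$) closes that gap without changing the rest of the argument.
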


\begin{proof}
  We are allowed to translate and scale, and so we may assume $a = 0$ and $b = e^x$.  Let $\lambda \in (0,\lambda_0)$ so that $R_\lambda \subset U$.  Now choose $N > K \lambda^{-r}$ and set $\epsilon = \mu N^{-Q}$.  Choose a finite collection of disjoint horizontal segments in $R_\lambda$ using Lemma \ref{l:dense-segments} with $\eta' = 1/2N$ and $\delta = \gamma/4L^2N$ for some $\gamma > 0$ to be determined.  We then define
  \begin{align*}
    \zeta(p) = \begin{cases}
      \rho_{\lambda,N}(p), & p \in R_\lambda, \\
      1,  & p \in U \backslash R_\lambda.
    \end{cases}
  \end{align*}
  Defining $\epsilon = \mu N^{-Q}$ where $\mu > 0$ is from Lemma \ref{l:stretch-incr-1}, we get that there exist $\eta > 1/2N$ so that $g,ge^{\eta x} \in R_\lambda$ are $(1+k)A$-stretched.  There then exist $\overline{l_jr_j}$ so that $d(l_j,g) < \gamma/4L^2N$ and $d(r_j,g e^{\eta x}) < \gamma/4L^2N$.  We get that
  \begin{align*}
    d(l_j,r_j) = |\pi_x(l_j) - \pi_x(r_j)| \leq |\pi_x(g) - \pi_x(ge^{\eta x})| + 2\delta = d(g,ge^{\eta x}) + \frac{\gamma}{2L^2N}.
  \end{align*}
  On the other hand, two applications of Lemma \ref{l:quasi-triangle} gives
  \begin{align*}
    d(f(l_j),f(r_j)) \geq \frac{1}{(1+\zeta(\gamma))^2}d(f(g),f(ge^{\eta x})) \geq \frac{(1+k)A}{(1+\zeta(\gamma))^2} d(g,ge^{\eta x}).
  \end{align*}
  As $d(g,ge^{\eta x}) = \eta \geq \frac{1}{2N}$ and $\zeta(\gamma) \to 0$ as $\gamma \to 0$, we get by taking $\gamma$ sufficiently small relative to $N$ that
  \begin{align*}
    \frac{d(f(l_j),f(r_j))}{d(l_j,r_j)} \geq \frac{(1+k) A d(g,ge^{\eta x})}{(1+\zeta(\gamma))^2(d(g,ge^{\eta x}) + \gamma/2L^2N)} \geq \left( 1 + \frac{k}{2} \right) A.
  \end{align*}
  Thus, we finish the lemma by setting $k' = k/2$.
\end{proof}

We can now nest these Jacobians together to get the following lemma.  The proof is a straightforward modification Lemma 3.7 of \cite{BK}, being sure to use Lemma \ref{l:one-step}.

\begin{lemma}
  Let $\overline{ab}$ be a horizontal line segment in the $x$ direction and $U \supset \overline{ab}$ be an open neighborhood.  For each $i \geq 1$ there is a measurable function $\zeta_i : U \to \{1,1+c\}$, a finite collection $\mathcal{S}_i$ of non-intersection horizontal segments $\overline{l_jr_j} \subset U$ in the direction of $x$ and $\epsilon_i > 0$ with the following property: For every $L$-bi-Lipschitz map $f : U \to G$ whose Jacobian $\rho$ satisfies
  \begin{align*}
    |\zeta_i^{-1}(1) \backslash \rho^{-1}([0,1])| &< \epsilon_i, \\
    |\zeta_i^{-1}(1+c) \backslash \rho^{-1}([1+c,\infty))| &< \epsilon_i,
  \end{align*}
  at least one segment from $\mathcal{S}_i$ will have endpoints $\frac{(1+k)^i}{L}$-stretched by $f$.
\end{lemma}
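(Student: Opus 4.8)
The plan is to prove this by induction on $i$, with Lemma \ref{l:one-step} doing all the geometric work at every level; throughout I will write $k$ for the constant that Lemma \ref{l:one-step} calls $k'$. For the base case $i=1$, I would first observe that any $L$-bi-Lipschitz map $f:U\to G$ satisfies $d(f(a),f(b))\geq L^{-1}d(a,b)$, so its endpoints $a,b$ are automatically $L^{-1}$-stretched; then applying Lemma \ref{l:one-step} to the pair $(\overline{ab},U)$ produces a measurable function $\zeta_1:U\to\{1,1+c\}$, a threshold $\epsilon_1>0$, and a finite collection $\mathcal{S}_1$ of non-intersecting horizontal $x$-segments with the property that whenever the Jacobian $\rho$ of $f$ satisfies the two displayed inequalities against $\zeta_1$ and $\epsilon_1$, some segment of $\mathcal{S}_1$ is $(1+k)L^{-1}=\frac{(1+k)^1}{L}$-stretched by $f$.

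For the inductive step I would start from the data $\zeta_i$, $\mathcal{S}_i=\{\overline{l_jr_j}\}_{j=1}^M$, $\epsilon_i$ and \emph{nest} an extra layer of oscillation into a small neighborhood of each segment of $\mathcal{S}_i$. Concretely: since the segments of $\mathcal{S}_i$ are compact, pairwise non-intersecting, and contained in the open set $U$, choose pairwise disjoint open sets $U_j$ with $\overline{l_jr_j}\subset U_j\subset U$ and $\sum_j |U_j|<\epsilon_i/2$; apply Lemma \ref{l:one-step} to each $(\overline{l_jr_j},U_j)$ to obtain $\zeta^{(j)}:U_j\to\{1,1+c\}$, $\epsilon^{(j)}>0$, and a finite family $\mathcal{S}^{(j)}$ of non-intersecting horizontal $x$-segments inside $U_j$; then define $\zeta_{i+1}$ to equal $\zeta^{(j)}$ on each $U_j$ and $\zeta_i$ on $U\setminus\bigcup_j U_j$, set $\mathcal{S}_{i+1}:=\bigcup_j \mathcal{S}^{(j)}$ (a collection of non-intersecting segments since the $U_j$ are disjoint), and put $\epsilon_{i+1}:=\min\bigl(\epsilon_i/2,\min_j \epsilon^{(j)}\bigr)$.

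The verification would then go as follows. Given an $L$-bi-Lipschitz $f:U\to G$ whose Jacobian $\rho$ satisfies the two inequalities against $\zeta_{i+1}$ and $\epsilon_{i+1}$, use that $\zeta_{i+1}$ and $\zeta_i$ differ only on $\bigcup_j U_j$ to conclude $|\zeta_i^{-1}(1)\setminus\rho^{-1}([0,1])|<\epsilon_{i+1}+\sum_j|U_j|<\epsilon_i$, and likewise for the $(1+c)$-inequality; hence the inductive hypothesis applies and furnishes a segment $\overline{l_jr_j}\in\mathcal{S}_i$ whose endpoints are $A$-stretched with $A=\frac{(1+k)^i}{L}$. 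Restricting $f$ to $U_j$, its Jacobian is (the restriction of) $\rho$, and since $(\zeta^{(j)})^{-1}(1)\subseteq\zeta_{i+1}^{-1}(1)$ one gets $|(\zeta^{(j)})^{-1}(1)\setminus\rho^{-1}([0,1])|\leq|\zeta_{i+1}^{-1}(1)\setminus\rho^{-1}([0,1])|<\epsilon_{i+1}\leq\epsilon^{(j)}$, and similarly for $1+c$. Lemma \ref{l:one-step} on $U_j$ then yields a segment of $\mathcal{S}^{(j)}\subset\mathcal{S}_{i+1}$ whose endpoints are $(1+k)A=\frac{(1+k)^{i+1}}{L}$-stretched by $f$, closing the induction.

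The only delicate point — and the reason the measures $|U_j|$ must be taken small — is that the passage from $\zeta_i$ to $\zeta_{i+1}$ alters the target function only on the small-measure set $\bigcup_j U_j$, so that a Jacobian satisfying the inequalities for $(\zeta_{i+1},\epsilon_{i+1})$ still satisfies them for $(\zeta_i,\epsilon_i)$ and the level-$i$ stretching conclusion is not destroyed. Beyond this bookkeeping the argument introduces no new geometric input, and the remainder follows the pattern of Lemma 3.7 of \cite{BK}.
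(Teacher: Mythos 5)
Your proposal is correct and follows precisely the route the paper indicates: an induction on $i$ that nests Lemma \ref{l:one-step} inside small pairwise-disjoint neighborhoods of the level-$i$ segments, shrinking the tolerance $\epsilon$ so that the patched function $\zeta_{i+1}$ still certifies the level-$i$ hypothesis --- this is exactly the ``straightforward modification of Lemma 3.7 of \cite{BK}, being sure to use Lemma \ref{l:one-step}'' that the authors describe.
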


The following theorem now easily follows:

\begin{thm} \label{t:non-jac}
  Let $G$ be a Carnot group that is not $\R$.  For any $c > 0$, there exists a function $\zeta : B(0,1) \to \{1,1+c\}$ such that there is no bi-Lipschitz map $f : B(0,1) \to G$ with Jacobian $\rho$ so that there is some $\lambda > 0$ for which
  \begin{align*}
    |\zeta^{-1}(1) \backslash \rho^{-1}([0,\lambda])| &= 0, \\
    |\zeta^{-1}(1+c) \backslash \rho^{-1}([(1+c)\lambda,\infty))| &= 0.
  \end{align*}
\end{thm}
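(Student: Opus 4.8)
The plan is to package the preceding nesting lemma; the theorem itself carries essentially no content beyond two bookkeeping reductions. First I would dispose of the scaling parameter $\lambda$. Given a putative bi-Lipschitz $f : B(0,1) \to G$ with Jacobian $\rho$ satisfying the two conditions in the statement for some $\lambda>0$, I would post-compose with the standard dilation, forming $g = \delta_{\lambda^{-1/Q}} \circ f$. Since the target semi-metric of Proposition \ref{p:convex-norm} is homogeneous and $\delta_t$ is an automorphism, $d(g(x),g(y)) = \lambda^{-1/Q} d(f(x),f(y))$, so $g$ is still bi-Lipschitz (with a finite, if possibly larger, constant); and since $|\delta_t(E)| = t^Q|E|$, the Jacobian of $g$ is $\lambda^{-1}\rho$, which satisfies $|\zeta^{-1}(1) \setminus (\lambda^{-1}\rho)^{-1}([0,1])| = 0$ and $|\zeta^{-1}(1+c) \setminus (\lambda^{-1}\rho)^{-1}([1+c,\infty))| = 0$. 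Thus it suffices to construct $\zeta$ so that no bi-Lipschitz $g : B(0,1) \to G$ can have Jacobian equal to $\zeta$ off a null set.

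Next I would build $\zeta$ by nesting the preceding lemma over all possible bi-Lipschitz constants. Fix pairwise disjoint open subsets $U_L \subset B(0,1)$, one for each integer $L \geq 2$, each containing a short horizontal segment $\overline{a_L b_L}$ in the direction of $x$ (such segments exist since $G \neq \R$). Applying the preceding lemma to $\overline{a_L b_L} \subset U_L$ with parameter $L$ produces, for each $i \geq 1$, a function $\zeta^{(L)}_i : U_L \to \{1,1+c\}$, a finite family $\mathcal{S}^{(L)}_i$ of disjoint horizontal segments, and a tolerance $\epsilon^{(L)}_i > 0$, such that every $L$-bi-Lipschitz map on $U_L$ whose Jacobian $\epsilon^{(L)}_i$-approximates $\zeta^{(L)}_i$ must $\tfrac{(1+k_L)^i}{L}$-stretch some segment of $\mathcal{S}^{(L)}_i$, where $k_L>0$ is the constant from that lemma. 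I would then choose $i(L)$ large enough that $(1+k_L)^{i(L)}/L > L$, and set $\zeta^{(L)} = \zeta^{(L)}_{i(L)}$, $\mathcal{S}^{(L)} = \mathcal{S}^{(L)}_{i(L)}$, $\epsilon^{(L)} = \epsilon^{(L)}_{i(L)}$. Since each $\zeta^{(L)}_i$ is assembled from the alternating functions $\rho_{\lambda,N}$, which are supported on tubes, it equals $1$ outside a compact subset of $U_L$; hence these glue to a measurable $\zeta : B(0,1) \to \{1,1+c\}$ with $\zeta = \zeta^{(L)}$ on each $U_L$ and $\zeta = 1$ elsewhere.

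Finally I would derive the contradiction. If $g : B(0,1) \to G$ is bi-Lipschitz with constant $M$ and has Jacobian equal to $\zeta$ off a null set, pick an integer $L \geq \max(2,M)$; then $g$, hence its restriction $g|_{U_L}$, is $L$-bi-Lipschitz, and $\mathrm{Jac}(g|_{U_L}) = \zeta^{(L)}$ off a null set (the Jacobian being a local quantity). The bad sets in the preceding lemma then have measure $0 < \epsilon^{(L)}$, so some segment of $\mathcal{S}^{(L)}$ is $(1+k_L)^{i(L)}/L$-stretched by $g$; but an $L$-bi-Lipschitz map can stretch a non-degenerate pair by at most $L$, whereas $(1+k_L)^{i(L)}/L > L$. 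This contradiction, with the first paragraph, proves the theorem.

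The genuine work is entirely in the earlier lemmas --- the oscillating Jacobian on a horizontal tube, the propagation of bi-Lipschitz behaviour to every horizontal line through the tube via regular balls and neighbouring-slice volume comparisons, and the inductive nesting that compounds the stretching (adapting Burago--Kleiner to Carnot groups). At the level of this theorem the only point needing care is that a single fixed $\zeta$ must defeat bi-Lipschitz maps of arbitrary constant \emph{and} arbitrary scaling $\lambda$; this is exactly what the $\delta_{\lambda^{-1/Q}}$ reduction and the nesting over the disjoint neighbourhoods $U_L$ (with each $\epsilon^{(L)} > 0$, so that exact agreement of Jacobians suffices) are designed to handle. I expect the mildest subtlety to be simply verifying that the $U_L$ are genuinely disjoint and that each $\zeta^{(L)}$ is trivial near $\partial U_L$, so that the local hypotheses of the preceding lemma can be read off $U_L$-by-$U_L$.
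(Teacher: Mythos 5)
The paper gives no explicit proof of Theorem~\ref{t:non-jac}---it is stated to ``easily follow'' from the nesting lemma---so there is no authoritative argument to compare against; what you supply is precisely what a careful reader is expected to fill in, and it is the correct route (dilation to normalize $\lambda$, disjoint neighbourhoods $U_L$ to nest over $L$, exactly as in Burago--Kleiner after their Lemma~3.7).

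One imprecision is worth correcting, because as written the intermediate claim does not match the theorem. In your first paragraph you conclude that ``it suffices to construct $\zeta$ so that no bi-Lipschitz $g$ can have Jacobian equal to $\zeta$ off a null set,'' and the final paragraph again assumes ``Jacobian equal to $\zeta$ off a null set.'' But the dilation $\delta_{\lambda^{-1/Q}}$ only converts the hypothesis into the one-sided threshold conditions
\begin{align*}
|\zeta^{-1}(1)\setminus \tilde\rho^{-1}([0,1])|=0,\qquad |\zeta^{-1}(1+c)\setminus \tilde\rho^{-1}([1+c,\infty))|=0,
\end{align*}
with $\tilde\rho=\lambda^{-1}\rho$, which allow $\tilde\rho$ to take any value in $[0,1]$ or $[1+c,\infty)$ on the respective level sets. ``Jacobian equal to $\zeta$ a.e.'' is strictly stronger, so ruling out only that case would not prove the theorem, and indeed would be useless in the application (Theorem~\ref{t:bk} produces a Jacobian $\rho=h/g$ that only satisfies the threshold conditions). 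Fortunately the contradiction you actually run never uses equality: restricting $\zeta$ to $U_L$ gives $\zeta^{(L)}$, the threshold conditions localize to $U_L$ and give measure $0<\epsilon^{(L)}$ for the two ``bad sets'' in the nesting lemma, and the stretching $(1+k_L)^{i(L)}/L>L$ contradicts $L$-bi-Lipschitz. So the proof is sound once the intermediate statement is phrased in the threshold form. Two other minor points: the parenthetical ``such segments exist since $G\neq\R$'' is not the role of that hypothesis (horizontal segments exist in every Carnot group; $G\neq\R$ is used to make the tube $R_\lambda$ and $\rho_{\lambda,N}$ nontrivial), and you should make explicit that each $\zeta^{(L)}$ is identically $1$ outside a compact subset of $U_L$ (true, since the nesting lemma's $\zeta_i$ is built from $\rho_{\lambda,N}$ supported on tubes) so the gluing is measurable and the restriction $g|_{U_L}$ genuinely satisfies the hypotheses of the nesting lemma applied inside $U_L$.
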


\subsection{Constructing a net}

We now prove the following theorem from which Theorem \ref{thm:BLequiv} will easily follow.

\begin{thm} \label{t:bk}
  Let $H$ be a locally compact group of polynomial growth endowed with a left-invariant path metric and $Y \subset H$ be a separated net.  If the asymptotic cone of $H$ is not $\R$, then there exists another separated net $X$ of $H$ that is not bi-Lipschitz equivalent to $Y$.
\end{thm}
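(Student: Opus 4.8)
The plan is to follow the Burago--Kleiner strategy outlined at the start of \S\ref{sec:BL}, now executed as a global construction rather than inside the non-Jacobian argument. First I would record the reductions. By \cite{breuillard}, every asymptotic cone of $H$ is isometric to a fixed Carnot group $G$ (with a Carnot--Carath\'eodory metric), and by hypothesis $G\neq\R$; fix once and for all a non-principal ultrafilter $\omega$ and scales $\lambda_k\to\infty$, so that the corresponding asymptotic cone of $H$ is identified with $G$. Since $Y$ is $(a,b)$-separated, there is $M=M(a,b,H)\geq 1$ with $M^{-1}\,\mathrm{vol}(B)\le\#(Y\cap B)\le M\,\mathrm{vol}(B)$ for every ball $B$ of radius $\ge 1$; equivalently, any weak-$*$ blow-down $\mu_Y^{\mathrm{cone}}$ (along $\omega$, at the scales $\lambda_k$) of the counting measure of $Y$ is comparable to Haar measure $\cL$ on $G$ within a fixed factor $M^2$. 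I would then apply Theorem~\ref{t:non-jac} on $G$ to obtain, for a constant $c$ to be fixed below, a function $\zeta:B(0,1)\to\{1,1+c\}$ that is not the Haar-Jacobian of any bi-Lipschitz map $B(0,1)\to G$, even after a global rescaling. In fact I would use the routine strengthening of Theorem~\ref{t:non-jac} in which the prescribed multiplicative gap between the two target values of the Jacobian may be any fixed $1+c_1$ with $0<c_1\le c$ rather than $1+c$; its proof is verbatim the same, since the stretching mechanism of Lemma~\ref{l:stretch-incr-1} only uses a definite lower bound on the volume discrepancy between neighbouring cells.

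Next I would construct the net $X$ as a \emph{nested redistribution} of $Y$ governed by $\zeta$, in the manner of \cite{BK}. Fix a nested family of bounded-geometry tilings of $H$ realizing all scales $\ge 1$ (each tile a disjoint union of tiles at the next finer scale), chosen so that, after rescaling by any $\lambda_k^{-1}$, the family approximates a fixed tiling of $G$. Using the identification of $\lambda_k^{-1}$-rescaled balls of $H$ with balls of $G$, together with the fact that $\zeta$ is itself built as a limit over scales, each tile $T$ of the family acquires a well-defined value $\zeta(T)\in\{1,1+c\}$; define $X$ by replacing, level by level from coarse to fine, each $Y\cap T$ by a uniformly $(a',b')$-separated subset of $T$ of cardinality $\lceil\zeta(T)\cdot\#(Y\cap T)\rceil$. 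Then $X$ is a separated net in $H$, and by construction its counting measure, blown down along $\omega$ at the scales $\lambda_k$, equals $\zeta\cdot\mu_Y^{\mathrm{cone}}$ on $B(0,1)\subset G$; denote this measure $\mu_X^{\mathrm{cone}}$.

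Finally I would derive the contradiction. Suppose $g:X\to Y$ is an $L$-bi-Lipschitz bijection. Since $X$ and $Y$ are nets in $H$, $g$ extends to an $(L,C)$-quasi-isometry of $H$, which in the asymptotic cone $G$ becomes an $L$-bi-Lipschitz homeomorphism $\Phi:G\to G$ (the additive constant disappears under rescaling, and a quasi-inverse for $g$ guarantees $\Phi$ is a homeomorphism). Bijectivity of $g$ forces $\Phi_*\mu_X^{\mathrm{cone}}=\mu_Y^{\mathrm{cone}}$, i.e.\ $\Phi_*(\zeta\,\mu_Y^{\mathrm{cone}})=\mu_Y^{\mathrm{cone}}$. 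Writing $\mu_Y^{\mathrm{cone}}=w\,\cL$ with $M^{-1}\le w\le M$, and using that $\Phi$ is a.e.\ Pansu differentiable with a Haar-Jacobian $J_\Phi$ (Pansu--Rademacher), the pushforward identity reads, almost everywhere,
\[
  J_\Phi(z)=\zeta(z)\,\frac{w(z)}{w(\Phi(z))}.
\]
Restricting to $B(0,1)$, the bi-Lipschitz map $\Phi|_{B(0,1)}:B(0,1)\to G$ thus has Haar-Jacobian $\rho=J_\Phi$ satisfying $\rho\le M^2$ on $\zeta^{-1}(1)$ and $\rho\ge (1+c)M^{-2}$ on $\zeta^{-1}(1+c)$. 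Choosing $c$ at the outset so large that $(1+c)M^{-2}\ge(1+c_1)M^2$ for some fixed $c_1>0$, and taking $\lambda=M^2$, this contradicts the strengthened non-Jacobian property of $\zeta$ from Theorem~\ref{t:non-jac}. Hence no bi-Lipschitz bijection $X\to Y$ exists, so $X$ is a separated net of $H$ not bi-Lipschitz equivalent to $Y$. (Theorem~\ref{thm:BLequiv} then follows by taking $H=G$, whose asymptotic cone is a Carnot group that is not $\R$ whenever $G\neq\R$.)

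The main obstacle is the third paragraph made rigorous: one must verify that the blow-down of the hypothetical bi-Lipschitz bijection is genuinely bi-Lipschitz and that its Haar-Jacobian is \emph{rigidly} pinned down by the combinatorics of $X$---this needs the Pansu--Rademacher theorem together with careful bookkeeping relating the rescaled counting measures of $X$ and $Y$ to Haar measure on $G$ through the ultralimit---and, secondarily, carrying out the nested construction of $X$ so that it reproduces the full multi-scale structure of $\zeta$ while remaining a uniformly separated net. These are the global counterparts of the two ``twists'' (drift of parallel horizontal segments, and possible non-horizontality of endpoints) already encountered in the proof of Theorem~\ref{t:non-jac}.
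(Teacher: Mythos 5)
Your high-level strategy mirrors the paper's: reduce via Breuillard's results to a Carnot-group target, build a non-Jacobian $\zeta$ on $B(0,1)\subset G$, construct $X$ so that a rescaled limit of its counting measure recognizes $\zeta$, and rule out a bi-Lipschitz $f:X\to Y$ by pinning down the Jacobian of the limiting map. The slack-in-the-density observation is also essentially the same device the paper uses: the paper absorbs the uncertainty of $Y$'s blow-down (the factor you call $M^{\pm 2}$, which the paper calls $\beta_1$) by \emph{widening the gap in the construction of $X$} --- choosing $j_0$ so that the density jump inside the balls $B_{j,k}$ is $\geq\beta_0\beta_1$, and then applying Theorem~\ref{t:non-jac} with $c=\beta_0-1$ --- rather than by re-proving a ``strengthened'' Theorem~\ref{t:non-jac}. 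Your plan to instead shrink the gap in the non-Jacobian theorem is a legitimate alternative, but it is a genuinely different route and you would need to verify that the compression inequality in Claim~\ref{c:claim-1} and the discrepancy-vs.-Claim~\ref{c:claim-3} argument in Case 1 of Lemma~\ref{l:stretch-incr-1} survive with the smaller gap; this is plausible but is not ``verbatim the same.'' Likewise, appealing to Pansu--Rademacher is a valid way to extract the Jacobian, but the paper reaches the same conclusion more lightly via weak-$*$ convergence of rescaled counting measures and Radon--Nikodym, so that part of your argument trades one tool for another without a clear gain.

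The substantive gap is in the second paragraph. You assert that after a ``nested redistribution,'' the counting measure of $X$, rescaled along one fixed ultrafilter and one fixed scale sequence, converges to $\zeta\cdot\mu_Y^{\mathrm{cone}}$ on $B(0,1)$; this is the entire content of the argument, and it is not addressed. Concretely: (i) $\zeta$ is defined only on $B(0,1)\subset G$, so ``$\zeta(T)$'' for a tile $T\subset H$ at an intermediate scale is not well defined without first specifying how the coarse-to-fine nesting of $\zeta_i$'s from the lemma preceding Theorem~\ref{t:non-jac} is laid down over $H$; (ii) a single ultralimit at a single basepoint gives no control over \emph{which} subsequence witnesses the convergence of the rescaled measures or of the rescaled maps $f$, and this is exactly why the paper plants the $\zeta$-pattern in a whole family of disjoint balls $B_{j,k}=B_H(x_{j,k},r_k)$ at varying basepoints and scales and then diagonalizes (Claim~\ref{cl:net-weak} plus the Arzel\`a--Ascoli step); (iii) the rounding $\lceil\zeta(T)\#(Y\cap T)\rceil$ introduces errors across scales that must be shown to vanish in the blow-down, which is what the small-boundaries property of Christ's cubes together with Claim~\ref{cl:meas-conv} delivers in the paper and what your sketch skips. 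In short, you correctly name the obstacle (``made rigorous'') but the missing ideas --- the multi-scale disjoint-ball planting and the diagonal extraction --- are precisely what make the measure convergence statement true, and without them the pushforward identity $\Phi_*\mu_X^{\mathrm{cone}}=\mu_Y^{\mathrm{cone}}$ on $B(0,1)$ is an assumption rather than a consequence of the construction.
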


We can now construct the net to prove Theorem \ref{t:bk}.  Following \cite{BK}, the construction will be based on discretizing the Jacobian constructed in Theorem \ref{t:non-jac}.  However, we need to be a little careful as $Y$ is not necessarily a lattice and so asymptotically may not resemble the Lebesgue measure but just $\mu$ so that $\frac{1}{C} \cL^n \leq \mu \leq C \cL^n$.  This is why we needed to prove Theorem \ref{t:non-jac} with $\zeta$ as a threshold.  The fact that the non-Jacobian is for the asymptotic cone as opposed to the base space also introduces some complexity.

To construct $X$, we will need the following theorem of Christ that says for any doubling metric measure space that there exists a sequence of nested partitions that behaves much like dyadic cubes.
\begin{thm}[Theorem 11 of \cite{christ}] \label{t:cubes}
  Let $(X,d,\mu)$ be a doubling metric measure space.  There exists a collection of open subsets $\Delta := \{Q_\omega^k \subset G : k \in \Z, \omega \in I_k\}$ and constants $\eta > 0$, $\tau > 1$, $C_1 > 0$ such that
  \begin{enumerate}[(a)]
    \item $\mu\left( X \backslash \bigcup_\omega Q_\omega^k \right) = 0, \qquad \forall k \in \Z$.
    \item If $k \geq j$ then either $Q_\alpha^j \cap Q_\omega^k = \emptyset$ or $Q_\alpha^j \subset Q_\omega^k$.
    \item For each $(j,\alpha)$ and each $k > j$ there exists a unique $\omega$ so that $Q_\alpha^j \subset Q_\omega^k$.
    \item For each $Q_\omega^k$ there exists some $z_{Q_\omega^k}$ such that
    \begin{align}
      B(z_{Q_\omega^k}, \tau^k) \subseteq Q_\omega^k \subseteq B(z_{Q_\omega^k}, \tau^{k+2}). \label{e:quasiball}
    \end{align}
    \item $\mu\{x \in X \backslash Q^k_\alpha : d(x, Q_\alpha^k) \leq t \tau^k\} \leq C_1 t^\eta \mu(Q_\alpha^k) \qquad \forall k,\alpha, \forall t \in (0,1)$. \label{e:small-boundaries}
  \end{enumerate}
\end{thm}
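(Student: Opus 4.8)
The statement is quoted verbatim from Theorem 11 of \cite{christ}, so my plan is to recall its proof and verify that the single hypothesis it uses — being a doubling metric measure space — holds in the cases we apply it to. A locally compact group of polynomial growth with a left-invariant path metric and Haar measure has two-sided polynomial volume bounds at all scales (Guivarc'h), hence is doubling, and the same is true of its asymptotic cone (a Carnot group, which is Ahlfors regular of exponent $Q$); so Christ's theorem applies with constants depending only on the ambient group.

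I would run Christ's construction directly. Fix $\tau > 1$. For each $k \in \Z$ choose a maximal $\tau^k$-separated set $\{z_\alpha^k : \alpha \in I_k\}$ in $X$; maximality forces the balls $B(z_\alpha^k, \tfrac12\tau^k)$ to be pairwise disjoint and the balls $B(z_\alpha^k, \tau^k)$ to cover $X$. These points serve as the centers $z_{Q_\alpha^k}$. Next impose a tree structure across scales: to each $(k,\alpha)$ assign a parent $p(k,\alpha) \in I_{k+1}$ realizing $\min_\beta d(z_\alpha^k, z_\beta^{k+1})$, ties broken by a fixed well-ordering of $I_{k+1}$. Iterating $p$ gives every $(k,\alpha)$ a unique ancestor at each coarser level. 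Define $Q_\omega^k$ to be the interior of the closure of $\bigcup\{\, B(z_\alpha^j, \tfrac12\tau^j) : j \le k,\ \text{the level-}k\text{ ancestor of }(j,\alpha)\text{ is }\omega \,\}$. Properties (b) and (c) are immediate because $p$ is a function; property (a) follows since the fine-scale balls cover $X$ up to a null set and each is allocated to exactly one level-$k$ ancestor; and \eqref{e:quasiball} follows from the disjointness/covering estimates at scale $k$ together with the geometric bound $d(z_\alpha^j, z_\alpha^k) \lesssim \sum_{i=j}^{k-1}\tau^{i+1} \lesssim \tfrac{\tau}{\tau-1}\tau^k$, after adjusting the separation constants so the exact radii $\tau^k$ and $\tau^{k+2}$ come out.

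The main obstacle — and the one place a naive construction fails — is the thin-boundary estimate \eqref{e:small-boundaries}. The mechanism is a self-improving geometric decay: if $x$ lies within $t\tau^k$ of $\partial Q_\omega^k$, then for every scale $j < k$ the ancestor chain of the center $z_\beta^j$ nearest $x$ must switch, somewhere between levels $j$ and $k$, from inside $Q_\omega^k$ to its complement (or back); the doubling property bounds the number of such ``ambiguous'' centers at each scale by a fixed multiple of the count one scale coarser, and summing the resulting geometric series in $j$ yields a bound $C_1 t^\eta \mu(Q_\omega^k)$ with $\eta > 0$ depending only on the doubling constant. I would reproduce this iteration from \cite{christ} essentially without change, since it is purely metric-measure-theoretic and uses nothing about $X$ beyond doubling; in our setting all the resulting constants then depend only on the group and on the choice of $\tau$.
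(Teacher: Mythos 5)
The paper does not prove this theorem; it quotes Theorem~11 of Christ and accompanies it with a short remark, which is the only content the paper contributes.  Your plan of re-running Christ's construction is therefore a genuinely different (and much longer) route, and it is fine as far as it goes, but it misses the one thing the paper actually does here.  Your opening premise — that the statement ``is quoted verbatim'' from Christ — is not correct.  Property~(e) as stated controls the \emph{outer} shell, the measure of $\{x \in X\setminus Q_\alpha^k : d(x,Q_\alpha^k)\le t\tau^k\}$, while Christ's small-boundary lemma controls the \emph{inner} shell $\{x\in Q_\alpha^k : d(x,X\setminus Q_\alpha^k)\le t\tau^k\}$.  The paper's remark points this out and supplies the bridge: up to a null set, the outer shell of $Q_\alpha^k$ is contained in the union of the inner shells of the boundedly many same-scale cubes adjacent to $Q_\alpha^k$, and since each of those has measure comparable to $\mu(Q_\alpha^k)$ by doubling, the inner estimate yields the outer one.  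Your sketch never makes the inner/outer distinction and so silently asserts exactly the estimate whose derivation from Christ requires an argument.

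A secondary point: your description of the mechanism behind the small-boundary estimate (ancestor chains ``switching'', counting ``ambiguous'' centers scale by scale) does not match Christ's argument.  The actual mechanism is that each cube $Q$ of generation $k$ contains an interior ball $B(z_Q,c\tau^k)$, so by doubling a fixed fraction of $\mu(Q)$ sits at distance $\gtrsim \tau^k$ from $X\setminus Q$; applying this uniformly to children and iterating over $m\approx \log_\tau(1/t)$ generations produces the $(1-\epsilon)^m \simeq t^\eta$ decay.  Since you defer to Christ for the details this is not fatal, but as written the paragraph would not survive being expanded into an actual proof.  If your goal is just to justify invoking the theorem, the efficient course is what the paper does: cite Christ, note that the hypotheses (doubling of $(H,d,\cL)$ and of the Carnot asymptotic cone) hold, and record the inner-to-outer conversion for property~(e).
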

We let $\Delta_k := \{Q_\omega^k : \omega \in I_k\}$.

\begin{remark}
  Property \eqref{e:small-boundaries} is called the ``small boundaries'' property and says that the boundary of a cube is small in the sense that small thickened (outer) neighborhoods of it have quantitatively vanishing measure.  In \cite{christ}, the small boundary property actually controlled the inner neighborhood
  \begin{align*}
    \mu\{x \in Q^k_\alpha, d(x,X \backslash Q_\alpha^k) \leq t \tau^k\} \leq C_1 t^\eta \mu(Q_\alpha^k).
  \end{align*}
  However, except for a possible null subset, the outer boundary of a cube lies in the union of the inner boundaries of the neighbors of the cubes.  As each cube has a uniformly bounded number of neighbor cubes of the same scale, the inner small boundaries estimate of \cite{christ} implies the above outer small boundaries.
\end{remark}

\begin{proof}[Proof of Theorem \ref{t:bk}]
  By Proposition 1.3 of \cite{breuillard}, we know that $H$ is $(1,C)$-quasi-isometric to $S$ for some $C \geq 1$ where $S$ is a connected and simply connected solvable Lie group of polynomial growth endowed with a periodic metric $d$.  We may suppose $Y$ satisfies
  \begin{align}
    d(x,y) \geq 100C, \qquad x \neq y \in Y. \label{e:100C-sep}
  \end{align}
  Indeed, take any separated net of $H$ satisfying \eqref{e:100C-sep}.  If this new net is bi-Lipschitz homeomorphic to $Y$, then proving Theorem \ref{t:bk} with $Y$ replaced by the new net also proves it for $Y$.  If the new net is not bi-Lipschitz homeomorphic to $Y$, then we are done.

  Because of \eqref{e:100C-sep}, we easily see that $Y$ is then bi-Lipschitz homeomorphic to its image $Y'$ in $S$ under the $(1,C)$-quasi-isometry, and so it suffices to find another separated net of $S$ that is not bi-Lipschitz homeomorphic to $Y'$ that also satisfies \eqref{e:100C-sep}.  Thus, we now replace $H$ by $S$ and $Y$ by $Y'$, which has separation at least $98C \geq 1$.

  By Theorem 1.4 of \cite{breuillard}, we know that there is another Lie group structure which makes $H$ into a Carnot group with left invariant (with respect to the new Lie structure) subFinsler metric $d_\infty$.  Call this new Lie group $G$.  Moreover the two metrics satisfy the following asymptotic convergence (see the claim in the proof of Corollary 1.9 of \cite{breuillard})
  \begin{align}
    \left| \frac{1}{n} d(x,y) - d_{\infty}(\delta_{1/n}(x),\delta_{1/n}(y))\right| \to 0 \label{e:GH-metr-conv}
  \end{align}
  as $n \to \infty$ for $x,y \in B_H(0,n)$ where $B_H(0,n)$ is the $n$-ball in the original Lie structure.

  \begin{claim} \label{cl:infinity-contain}
    For every $\epsilon > 0$, there exists $r > 0$ so that
    \begin{align*}
      B_G(0,(1-\epsilon)r) \subseteq B_H(0,r) \subseteq B_G(0,(1+\epsilon)r).
    \end{align*}
  \end{claim}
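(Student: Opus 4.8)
The plan is to deduce both inclusions directly from the asymptotic comparison \eqref{e:GH-metr-conv}, after first upgrading it to a genuine global linear comparison between $d$ and $d_\infty$. The starting observation is that the standard dilation $\delta_\lambda$ is a group automorphism of $G$ and $d_\infty$ is a left-invariant $\delta_\lambda$-homogeneous metric, so $d_\infty(\delta_{1/n}x,\delta_{1/n}y)=\tfrac1n d_\infty(x,y)$ for all $x,y$, and in particular $\delta_{1/n}$ fixes the identity $0$. Substituting this into \eqref{e:GH-metr-conv} and taking $y=0$ shows: for every $\epsilon'>0$ there is $N=N(\epsilon')$ such that $|d(x,0)-d_\infty(x,0)|<\epsilon' n$ whenever $n\ge N$ and $x\in B_H(0,n)$.

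Next I would promote this to the two one-sided bounds
\begin{align*}
  d_\infty(x,0) &\le (1+\epsilon')\,d(x,0)+C_1'(\epsilon'),\\
  d(x,0) &\le \tfrac{1}{1-\epsilon'}\,d_\infty(x,0)+C_2'(\epsilon'),
\end{align*}
valid for all $x$ in the underlying group $H=G$. For the second bound, fix $x$: if $d(x,0)<N$ the estimate is trivial, and otherwise apply the displayed inequality at the scale $n:=\lfloor d(x,0)\rfloor+1$, for which $x\in B_H(0,n)$ and $n\le d(x,0)+1$, then rearrange; the first bound is obtained in the same way by swapping the roles of $d$ and $d_\infty$.

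Finally, given $\epsilon\in(0,1)$, apply the previous step with $\epsilon'=\epsilon/2$ to obtain constants $C_1',C_2'$, and choose $r$ large enough that $C_1'<(\epsilon/2)r$ and $C_2'<\bigl(1-\tfrac{1-\epsilon}{1-\epsilon/2}\bigr)r$ (both thresholds are positive and independent of $r$). If $x\in B_H(0,r)$ then $d_\infty(x,0)\le(1+\epsilon/2)r+C_1'<(1+\epsilon)r$, which gives $B_H(0,r)\subseteq B_G(0,(1+\epsilon)r)$; and if $x\in B_G(0,(1-\epsilon)r)$ then $d(x,0)\le\tfrac{1-\epsilon}{1-\epsilon/2}r+C_2'<r$, which gives $B_G(0,(1-\epsilon)r)\subseteq B_H(0,r)$.

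The only point that needs care is the inclusion $B_G(0,(1-\epsilon)r)\subseteq B_H(0,r)$: one cannot feed a point $x$ with small $d_\infty(x,0)$ directly into \eqref{e:GH-metr-conv}, because that estimate presupposes $x\in B_H(0,n)$ — precisely the kind of control we are trying to prove. Extracting the constant-shifted linear comparison first, by choosing the scale $n$ adapted to each individual point, removes this circularity, and the rest is routine bookkeeping with the parameters.
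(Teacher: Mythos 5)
Your proof is correct, and it is essentially the same idea as the paper's, but packaged more carefully. The paper argues by contradiction along a sequence $r_n\to\infty$, producing points $x_n$ with $d_H(0,x_n)>r_n$ and $d_G(0,x_n)<(1-\epsilon)r_n$, and asserts that this ``clearly violates'' \eqref{e:GH-metr-conv} with $x=x_n$, $y=0$. As you observe, this is slightly delicate because \eqref{e:GH-metr-conv} presupposes $x_n\in B_H(0,n)$, which fails for $n=r_n$; one must implicitly choose the scale $n$ adapted to $d_H(0,x_n)$ (using the a priori quasi-isometry to control $d_H(0,x_n)$ by $r_n$ up to constants). Your version makes exactly this choice explicit by first upgrading \eqref{e:GH-metr-conv} into two one-sided global comparisons of the form $d_\infty\le(1+\epsilon')d+C$ and $d\le(1-\epsilon')^{-1}d_\infty+C'$, selecting $n=\lfloor d(x,0)\rfloor+1$ so that the hypothesis $x\in B_H(0,n)$ is automatic, and then reading off both ball inclusions directly. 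This buys you a direct proof with no contradiction and no hidden use of the coarse quasi-isometry bound, at the small cost of the extra bookkeeping to extract and then absorb the additive constants; both proofs ultimately hinge on the homogeneity of $d_\infty$ and the uniform convergence in \eqref{e:GH-metr-conv}. One stylistic nit: saying the first bound follows ``by swapping the roles of $d$ and $d_\infty$'' is a touch loose, since in both directions the scale $n$ is chosen from $d(x,0)$ (so that $x\in B_H(0,n)$ holds by fiat); what you actually do is rearrange the same two-sided estimate in the opposite direction, which is fine.
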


  \begin{proof}
    Suppose $B_G(0,(1-\epsilon)r) \not\subseteq B_H(0,r)$ for a sequence $r_n \to \infty$.  Then there exist $x_n \in B_G(0,(1-\epsilon)r_n)$ so that $x_n \notin B_H(0,r_n)$.  This means $d_H(0,x_n) > r_n$ but $d_G(0,x_n) < (1-\epsilon)r_n$.  This clearly violates \eqref{e:GH-metr-conv} with $x = x_n$ and $y = 0$.  The $B_H(0,r) \subseteq B_G(0,(1+\epsilon)r)$ follows similarly.
  \end{proof}
  
  From now on, we let $(H,d)$ and $(G,d_\infty)$ denote the two metrized Lie groups that share the same underlying manifold (which is homeomorphic to some $\R^n$).  We then let $\cL$ denote the Lebesgue measure on $G$ that is Haar for both $G$ and $H$.  Let $V_H(r) = |B_H(0,r)|$ and $V_G(r) = |B_G(0,r)|$.  We have that
  \begin{align}
    \lim_{r \to \infty} \frac{V_H(r)}{V_G(r)} \to 1. \label{e:vol-compare}
  \end{align}

  For any separated net $X \subset H$, we let $\mu_X$ denote the counting measure of $X$ on $H$.  Let $id : H \to G$ be the identity map of the underlying manifold between the two metrics and let $\phi_n = \delta_{1/n} \circ id : (H, n^{-1}d) \to (G,d_\infty)$
  
  \begin{claim} \label{cl:net-weak}
    For any $a \geq 1$, there exist $\beta > \alpha > 0$ so that if $X$ is a $(s,as)$-separated net of $H$ for $s \geq 1$ and $x_n \in H$, then there exists a subsequence of the measures
    \begin{align*}
      \mu_n := n^{-Q} ((\phi_n)_*\mu_{x_nX}) |_{B_G(0,1)}
    \end{align*}
    that weakly converges.  In addition, any such weak limit is of the form $g \cL$ where $g$ is a measurable function that satisfies
    \begin{align}
      s^{-Q} \alpha \leq g \leq s^{-Q} \beta, \label{e:g-bounds}
    \end{align}
    and may depend on the subsequence.
  \end{claim}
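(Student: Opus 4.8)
The plan is to combine weak-$*$ compactness with the elementary packing and covering estimates for separated nets, transported to the homogeneous group $(G,d_\infty)$ via the metric comparison \eqref{e:GH-metr-conv}. First I would record the counting bounds for a separated net: since left translation is a $d$-isometry, $x_nX$ is again $(s,as)$-separated, and for any $(s,as)$-net $Z$ of $(H,d)$ the balls $\{B_H(z,s/2)\}_{z\in Z}$ are pairwise disjoint while $\{B_H(z,as)\}_{z\in Z}$ covers $H$; since $\cL$ is left-$H$-invariant this gives, for all $w\in H$ and $r>as$,
\[
  \frac{V_H(r-as)}{V_H(as)}\;\le\;\#\big(Z\cap B_H(w,r)\big)\;\le\;\frac{V_H(r+s/2)}{V_H(s/2)}.
\]
Unwinding the definitions and using that $d_\infty$ is left-$G$-invariant and $\delta$-homogeneous, one has $\mu_n(B_G(z,\rho))=n^{-Q}\#\big(x_nX\cap B_G(\delta_n z,\,n\rho)\big)$ for $B_G(z,\rho)\subset B_G(0,1)$. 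By Claim~\ref{cl:infinity-contain} the points involved all lie in some $B_H(0,Cn)$; as the convergence in \eqref{e:GH-metr-conv} is uniform on $B_H(0,n)^2$ (and extends to $B_H(0,Cn)^2$ after rescaling, using the $\delta$-homogeneity of $d_\infty$), there are $\epsilon_n\to 0$ with $B_H(\delta_n z, n(\rho-\epsilon_n))\subseteq B_G(\delta_n z,n\rho)\subseteq B_H(\delta_n z,n(\rho+\epsilon_n))$ once intersected with $x_nX$. Feeding this into the displayed bounds, dividing by $n^Q$, and using $V_H(m)/m^Q\to V_G(1)=:c_G$ (this is \eqref{e:vol-compare} together with the $\delta$-homogeneity of $\cL$), I would obtain, for each fixed ball $B=B_G(z,\rho)\subset B_G(0,1)$,
\[
  \frac{|B|}{V_H(as)}\;\le\;\liminf_n\mu_n(B)\;\le\;\limsup_n\mu_n(B)\;\le\;\frac{|B|}{V_H(s/2)}.
\]

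Next I would pass to the limit and extract a density. Taking $B=B_G(0,1)$ shows $\sup_n\mu_n(B_G(0,1))<\infty$, so some subsequence converges weakly-$*$ to a Radon measure $\mu$ supported on $\overline{B_G(0,1)}$. Applying the portmanteau inequalities to the last display gives $\mu(B)\le\beta s^{-Q}|B|$ for every open ball $B\subset B_G(0,1)$ and $\mu(\overline B)\ge\alpha s^{-Q}|B|$ for every closed ball, where $\beta:=\sup_{s\ge 1} s^Q/V_H(s/2)$ and $\alpha:=\inf_{s\ge1} s^Q/V_H(as)$. By \eqref{e:vol-compare} the functions $s\mapsto V_H(s/2)/s^Q$ and $s\mapsto V_H(as)/s^Q$ are continuous and positive on $[1,\infty)$ with finite positive limits $c_G2^{-Q}$ and $c_Ga^Q$, so $0<\alpha\le\beta<\infty$ and $\alpha,\beta$ depend only on $a$ and $G$ (shrinking $\alpha$ or enlarging $\beta$ slightly to force $\alpha<\beta$ if needed). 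The upper bound, combined with the Vitali covering property of the doubling measure $\cL$ on $(G,d_\infty)$, gives $\mu\ll\cL$ on $B_G(0,1)$ with density $g\le\beta s^{-Q}$ a.e.; then $\mu$ charges no $\cL$-null set, so $\mu(\partial B)=0$ for a.e. radius and the lower bound reads $\mu(B)\ge\alpha s^{-Q}|B|$, whence the Lebesgue differentiation theorem (valid since $\cL$ is doubling) yields $g(x)=\lim_{\rho\to0}\mu(B(x,\rho))/|B(x,\rho)|\ge\alpha s^{-Q}$ a.e. Thus $\mu=g\cL$ with $s^{-Q}\alpha\le g\le s^{-Q}\beta$, and since $\alpha,\beta$ did not depend on the chosen subsequence, every weak limit has this form.

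I expect the main obstacle to be the bookkeeping forced by the two distinct metric structures $d$ and $d_\infty$ on the common underlying manifold: one must verify that the errors produced when replacing $d_\infty$-balls by $d$-balls through \eqref{e:GH-metr-conv} are genuinely uniform over the growing families of points that occur, so that they vanish after the $n^{-Q}$ rescaling, and — crucially — that the resulting constants $\alpha,\beta$ can be chosen independently of $s$. It is exactly the sharp homogeneity $V_G(r)=c_Gr^Q$ of the Carnot group and the volume ratio limit \eqref{e:vol-compare} that collapse the $s$-dependence to the clean factor $s^{-Q}$. The final step — upgrading uniform-in-$n$ estimates on balls to a.e. bounds on the density — is routine via Vitali covering and Lebesgue differentiation for $\cL$ on $(G,d_\infty)$, and only uses balls compactly contained in $B_G(0,1)$, so any mass of $\mu$ escaping to $\partial B_G(0,1)$ is harmless.
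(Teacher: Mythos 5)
Your argument is correct and follows essentially the same route as the paper's: counting bounds for separated nets by packing and covering, the volume comparison \eqref{e:vol-compare} together with the sharp homogeneity $V_G(r)=r^Q V_G(1)$ to collapse the $s$-dependence to $s^{-Q}$, the metric comparison \eqref{e:GH-metr-conv} to pass between $d_\infty$-balls and $d$-balls, weak-$*$ compactness, and finally bounds on $\mu(B)$ for balls $B\subset B_G(0,1)$ yielding the density. The only cosmetic differences are that the paper sandwiches $\chi_{B_G(x,r)}$ between continuous cutoff functions and reads off explicit constants $\alpha=1/(C6^{Q+1}V_G(a))$, $\beta=C6^{Q+1}/V_G(1)$, whereas you invoke the portmanteau inequalities and take $\alpha,\beta$ as an infimum/supremum over $s\ge 1$, then spell out the Vitali covering and Lebesgue differentiation step that the paper leaves implicit; these are equivalent in content.
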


  \begin{proof}
    By \eqref{e:vol-compare}, there exists some $C \geq 1$ depending only on $G$ and $H$ so that
    \begin{align}
      \frac{1}{C} s^Q V_G(\lambda) \leq V_H(\lambda s) \leq C s^Q V_G(\lambda), \qquad \forall \lambda \geq 1. \label{e:v-growth}
    \end{align}

    A simple packing argument shows that for any $x,y \in H$ and $r$ sufficiently large (say $\geq 10s$), we have
    \begin{align}
      \frac{V_H(r)}{V_H(as)} = \frac{|B_H(x,r)|}{|B_H(0,as)|} \leq \# (yX \cap B_H(x,r)) \leq \frac{|B_H(x,2r)|}{|B_H(0,s)|} = \frac{V_H(2r)}{V_H(s)}. \label{e:net-count-bounds}
    \end{align}

    Let $B_G(x,r) \subseteq B_G(0,1)$ and $f : G \to [0,1]$ be a continuous function so that
    \begin{align}
      \chi_{B_G(x,r)} \leq f \leq \chi_{B_G(x,2r)}. \label{e:f-cutoff}
    \end{align}
    It follows from \eqref{e:GH-metr-conv} that for sufficiently large $n$ that 
    \begin{align}
      B_G(x,2r) \subset \phi_n(B_H(\phi_n^{-1}(x), 3rn)) \label{e:ball-in}
    \end{align}
    and so for all $n$ sufficiently large we have
    \begin{multline*}
       \int f ~d\mu_n \overset{\eqref{e:f-cutoff} \wedge \eqref{e:ball-in}}{\leq} n^{-Q} \# (x_nX \cap B_H(\phi_n^{-1}(x), 3rn)) \overset{\eqref{e:net-count-bounds}}{\leq} n^{-Q} \frac{V_H(6rn)}{V_H(s)} \\ \overset{\eqref{e:vol-compare}}{\leq} n^{-Q} \frac{2 V_G(6rn)}{V_H(s)} \overset{\eqref{e:v-growth}}{\leq} s^{-Q} \frac{C6^{Q+1}}{V_G(1)} V_G(r).
    \end{multline*}
    In particular, we have for every ball $B_G(x,r) \subset B_G(0,1)$ that $\mu_n(B_G(x,r)) \leq \int f ~d\mu_n$ is uniformly bounded for sufficiently large $n$ and so convergent subsequences of $\mu_n$ exist by, say, Banach-Steinhaus and Riesz representation theorem (see Theorem 11.4.7 of \cite{hkst} for a similar proof).  This establishes existence of a converging subsequence.

    For any weak limit $\mu$ of a subsequence of the $\mu_n$, we also have for all $B_G(x,r) \subseteq B_G(0,1)$ that
    \begin{align*}
      \mu(B_G(x,r)) \overset{\eqref{e:f-cutoff}}{\leq} \int f ~d\mu \leq s^{-Q} \frac{C6^{Q+1}}{V_G(1)} |B_G(x,r)|.
    \end{align*}
    Using a similar argument, one gets for all $B_G(x,r) \subseteq B_G(0,1)$ that
    \begin{align*}
      \mu(B_G(x,r)) \geq s^{-Q} \frac{1}{C6^{Q+1} V_G(a)} |B_G(x,r)|.
    \end{align*}
    Thus, we get \eqref{e:g-bounds} for $\alpha = \frac{1}{C6^{Q+1}V_G(a)}$ and $\beta = \frac{C6^{Q+1}}{V_G(1)}$.
  \end{proof}

  Using Theorem \ref{t:cubes}, we construct the dyadic cubes $\Delta^G$ and $\Delta^H$ for $G$ and $H$.  Note by \eqref{e:quasiball} that $X_j := \{z_Q : Q \in \Delta_j^H\}$ are $(\tau^j,\tau^{j+2})$-nets.  We can then apply Claim \ref{cl:net-weak} with $a = \tau^2$ to get $\beta_0 > \alpha_0 > 0$ so that any weak limit satisfies
  \begin{align}
    \alpha_0 \tau^{-jQ} \cL \leq \lim_n n^{-Q} ((\phi_n)_* \mu_{x_nX_j})|_{B_G(0,1)} \leq \beta_0 \tau^{-jQ} \cL \label{e:center-dense}
  \end{align}
  for $j \geq 0$.  We can also apply Claim \ref{cl:net-weak} to $Y$ to get that any weak limit satisfies
  \begin{align}
    \alpha_1 \cL \leq \lim_n n^{-Q} ((\phi_n)_* \mu_{x_nY})|_{B_G(0,1)} \leq \beta_1 \cL. \label{e:y-conv}
  \end{align}
  For simplicity, let us assume $\alpha_0 = \alpha_1 = 1$.  We can apply Theorem \ref{t:non-jac} with $c = \beta_0 - 1$ to get a function $\zeta : B_G(0,1) \to \{1,\beta_0\}$ that serves as a threshold for Jacobians of bi-Lipschitz maps.
  
  For each $k > 0$, let
  \begin{align*}
    \cT_k := \{Q \in \Delta_{-k}^G : Q \subset B_G(0,1)\}.
  \end{align*}
  We also define $\cT_k' = \left\{Q \in \cT_k : \fint_Q \zeta < \frac{1+\beta_0}{2} \right\}$ and $R_k = \bigcup_{Q \in \cT_k'} Q$.

  \begin{claim} \label{cl:meas-conv}
    For $A_k \subset B_G(0,1)$ so that $R_k \subseteq A_k \subseteq B_G(R_k, k^{-1} \tau^{-k})$, we have that $A_k$ converges in measure to $\zeta^{-1}(1)$.
  \end{claim}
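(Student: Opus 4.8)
The goal is to show that $|A_k\,\triangle\,\zeta^{-1}(1)|\to 0$ as $k\to\infty$, which on the finite measure space $(B_G(0,1),\cL)$ is equivalent to convergence in measure of $\chi_{A_k}$ to $\chi_{\zeta^{-1}(1)}$. The plan is to compare $A_k$ with its ``dyadic core'' $R_k$, discard the thin collar $A_k\setminus R_k$, and then compare $R_k$ with $\zeta^{-1}(1)$ cube by cube, the point being that $R_k$ is exactly the union of those level-$(-k)$ Christ cubes on which $\zeta$ takes the value $1$ more than half the time. Throughout write $Q^{-k}(x)$ for the ($\cL$-a.e.\ unique) cube of $\Delta_{-k}^G$ containing $x$, and for $Q\in\cT_k$ put $p_Q:=|Q\cap\zeta^{-1}(1)|/|Q|$. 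Since $\zeta$ takes only the values $1$ and $\beta_0>1$, one has $\fint_Q\zeta=\beta_0-(\beta_0-1)p_Q$, so $\fint_Q\zeta<\tfrac{1+\beta_0}{2}$ if and only if $p_Q>1/2$; hence $Q\in\cT_k'$ iff $p_Q>1/2$, and $R_k=\bigsqcup_{Q\in\cT_k'}Q$ up to a null set (distinct level-$(-k)$ cubes are disjoint by property (b) of Theorem \ref{t:cubes}).

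First I would show the collar and the uncovered region are negligible. Since $A_k\setminus R_k\subseteq B_G(R_k,k^{-1}\tau^{-k})\setminus R_k\subseteq\bigcup_{Q\in\cT_k'}\{x\notin Q:d_\infty(x,Q)\le k^{-1}\tau^{-k}\}$, the small-boundaries property (e) of Theorem \ref{t:cubes}, applied at scale $t=k^{-1}\in(0,1)$ to each cube $Q\in\cT_k'$ (which sits at scale $\tau^{-k}$), gives $|\{x\notin Q:d_\infty(x,Q)\le k^{-1}\tau^{-k}\}|\le C_1 k^{-\eta}|Q|$; summing and using $\sum_{Q\in\cT_k'}|Q|=|R_k|\le|B_G(0,1)|$ yields $|A_k\setminus R_k|\le C_1 k^{-\eta}|B_G(0,1)|\to 0$. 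Moreover, the part $U_k$ of $B_G(0,1)$ not covered by cubes of $\cT_k$ consists of pieces of cubes $Q\in\Delta_{-k}^G$ meeting $B_G(0,1)$ but not contained in it; since $\operatorname{diam}_{d_\infty}Q\le 2\tau^{-k+2}$ by \eqref{e:quasiball}, any such piece lies in the annulus $B_G(0,1)\setminus B_G(0,1-2\tau^{-k+2})$, whose measure equals $\bigl(1-(1-2\tau^{-k+2})^Q\bigr)|B_G(0,1)|\to 0$ by the dilation scaling $|B_G(0,r)|=r^Q|B_G(0,1)|$.

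Next, using $R_k\subseteq A_k$ together with the two estimates above and the cube decomposition, one obtains
\begin{align*}
  |A_k\,\triangle\,\zeta^{-1}(1)|\ \le\ \sum_{Q\in\cT_k'}(1-p_Q)|Q|\ +\ \sum_{Q\in\cT_k\setminus\cT_k'}p_Q|Q|\ +\ o_k(1)\ =\ \sum_{Q\in\cT_k}\min(p_Q,1-p_Q)\,|Q|\ +\ o_k(1),
\end{align*}
where the last equality uses $1-p_Q=\min(p_Q,1-p_Q)$ when $p_Q>1/2$ and $p_Q=\min(p_Q,1-p_Q)$ when $p_Q\le 1/2$. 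It remains to see that $\sum_{Q\in\cT_k}\min(p_Q,1-p_Q)|Q|\to 0$. This follows from Lebesgue differentiation along the Christ cubes: by \eqref{e:quasiball}, $Q^{-k}(x)\subseteq B_G(x,2\tau^{-k+2})$ while $|Q^{-k}(x)|\ge c_0\tau^{-kQ}\ge(2\tau^2)^{-Q}|B_G(x,2\tau^{-k+2})|$, so for every Lebesgue point $x$ of $\chi_{\zeta^{-1}(1)}$ — that is, for $\cL$-a.e.\ $x$, since $(G,d_\infty,\cL)$ is a doubling metric measure space — we get $p_{Q^{-k}(x)}=\fint_{Q^{-k}(x)}\chi_{\zeta^{-1}(1)}\to\chi_{\zeta^{-1}(1)}(x)\in\{0,1\}$. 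Hence $\min(p_{Q^{-k}(x)},1-p_{Q^{-k}(x)})\to 0$ a.e., and as these are bounded by $1/2$, dominated convergence gives $\sum_{Q\in\cT_k}\min(p_Q,1-p_Q)|Q|=\int_{\bigcup\cT_k}\min(p_{Q^{-k}(x)},1-p_{Q^{-k}(x)})\,d\cL(x)\le\int_{B_G(0,1)}\min(p_{Q^{-k}(x)},1-p_{Q^{-k}(x)})\,d\cL(x)\to 0$, completing the proof.

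I do not expect a genuine obstacle here; the two places needing care are (a) verifying that property (e) of Theorem \ref{t:cubes} can be invoked at the shrinking radius $k^{-1}\tau^{-k}$ and that the uncovered collar $U_k$ near $\partial B_G(0,1)$ is controlled by the dilation scaling, and (b) justifying the almost-everywhere differentiation $\fint_{Q^{-k}(x)}\chi_{\zeta^{-1}(1)}\to\chi_{\zeta^{-1}(1)}(x)$ along the Christ cubes, which I reduce to the ordinary Lebesgue differentiation theorem on the doubling space $(G,d_\infty,\cL)$ by means of the quasi-ball inclusions \eqref{e:quasiball}.
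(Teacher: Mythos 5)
Your proof is correct, and it reaches the conclusion by a different decomposition than the paper. The paper's proof begins (as you do) by reducing to the claim that $R_k \to \zeta^{-1}(1)$ in measure, but it then fixes $\epsilon > 0$, applies the Lebesgue density theorem to extract a subset $A' \subseteq \zeta^{-1}(1)$ of measure $\geq (1-\epsilon)|\zeta^{-1}(1)|$ with uniform density at all scales below $R(\epsilon)$, shows that any cube $Q \in \Delta_{-k}^G$ meeting $A'$ must lie in $\cT_k'$ (hence $R_k \supseteq A'$ up to a null set), and then splits $\cT_k'$ into cubes meeting $A'$ (controlled via a density constant $c(\epsilon)$) and the rest (controlled by $\epsilon|A|$), arriving at $|A \triangle R_k| \leq 2(\epsilon + c(\epsilon))|A|$. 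Your argument instead writes the symmetric difference directly as $\sum_{Q\in\cT_k}\min(p_Q,1-p_Q)|Q| + o_k(1)$ and kills the sum with pointwise Lebesgue differentiation along Christ cubes plus dominated convergence. Both hinge on the same facts (small boundaries of Christ cubes, Lebesgue density in a doubling space), but your version avoids the explicit construction of $A'$ and the $\epsilon$--$c(\epsilon)$ bookkeeping, giving a cleaner one-pass argument; the paper's version is more quantitative, producing an explicit rate $2(\epsilon+c(\epsilon))|A|$ once $k$ is large, which could matter if one wanted effective bounds. One small point worth making explicit in your write-up: the Lebesgue differentiation theorem you invoke on $(G,d_\infty,\cL)$ should be stated for the nested family $\{Q^{-k}(x)\}_k$ rather than balls, but your quasi-ball sandwich $B(z_{Q},\tau^{-k})\subseteq Q\subseteq B(z_Q,\tau^{-k+2})\subseteq B(x,2\tau^{-k+2})$ with $|Q|\gtrsim|B(x,2\tau^{-k+2})|$ is exactly the ``regular family'' hypothesis needed, so this is a verification rather than a gap.
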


  This is an easy measure theoretic argument, and we will prove the claim at the end of this subsection.  For now, let us assume the claim and construct the net $X$.

  Now let $B_{j,k} := B_H(x_{j,k},r_k)$ be a collection of disjoint balls in $H$ where $r_k \to \infty$.  For each $j$, we have that $B_{j,k}$ is isometric to $B_H(0,r_k)$ via the left translations by $x_{j,k}^{-1}$.
  
  We now construct $X$.  Fix a $j_0 \in \mathbb{N}$ so that
  \begin{align}
     \tau^{-j_0Q} \leq \frac{1}{\beta_0^2 \beta_1} \label{e:double-density}
  \end{align}
  and a $j_1 \in \mathbb{N}$ so that $\tau^{j_1} \geq 100C$.  For each $R \in \Delta^H_{j_0+j_1}$, if there exists $j,k$ so that $R \subset B_{j,k}$ and $d_\infty(\phi_{r_k}(x_{j,k}^{-1}R),R_j) < j^{-1} \tau^{-j}$, add the point $z_R$ to $X$.  Otherwise, add the the points $\{z_Q : Q \in \Delta_{j_1}^H, Q \subset R\}$.  Altogether, we get a $100C$-separated net $X \subset H$.

  Now suppose there is a $L$-bi-Lipschitz homeomorphism $f : X \to Y$.  Let $X_{j,k} := X \cap B_{j,k}$, $z_{j,k}$ be some arbitrary basepoint in $f(X_{j,k})$ and $Y_{j,k} := z_{j,k}^{-1} Y \cap B_H(0,2Lk)$.  We can define then the functions
  \begin{align*}
    f_{j,k} : X_{j,k} &\to Y_{j,k} \\
    x &\mapsto z_{j,k}^{-1} f(x).
  \end{align*}

  We now establish some convergences along subsequences.  By the construction of $X$, \eqref{e:center-dense}, the assumption $\alpha_0 = 1$, the choice of $j_0$, and \eqref{e:double-density}, for every $j$, we get the following convergence in $k$ along a subsequence (which we have reindexed along)
  \begin{align*}
    (r_k)^{-Q} ((\phi_{j,k})_* \mu_{x_{j,k}^{-1}X})|_{B_G(0,1)} \rightharpoonup \frac{1}{h_j} \cL.
  \end{align*}
  Here, for each $j$ there is a measurable set $A_j$ so that $R_j \subseteq A_j \subseteq B_G(R_j,j^{-1}\tau^j)$ and $h_j$ is a measurable function satisfying
  \begin{align*}
    |A_j \backslash h_j^{-1}([0,1])| &= 0, \\
    |(B_G(0,1) \backslash A_j) \backslash h_j^{-1}([\beta_0\beta_1,\infty))| &= 0.
  \end{align*}
  Taking Claim \ref{cl:meas-conv} into account, we can get by a diagonalization argument that there is the following convergence along a subsequence of $j$
  \begin{align}
    \mu_j := r_{k_j}^{-Q} ((\psi_{j,k_j})_* \mu_{x_{j,k_j}^{-1}X})|_{B_G(0,1)} \rightharpoonup \frac{1}{h} \cL \label{e:x-conv}
  \end{align}
  where $h$ is a measurable function satisfying
  \begin{align}
    |\zeta^{-1}(1) \backslash h^{-1}([0,1])| &= 0, \label{e:h-1} \\
    |\zeta^{-1}(\beta_0) \backslash h^{-1}([\beta_0 \beta_1,\infty))| &= 0. \label{e:h-beta}
  \end{align}
  
  Let $\Phi_j : B_G(0,1) \to (X_{j,k_j},r_{k_j}^{-1}d)$ be the composition of the inverse of $id$, the left translation by $x_{j,k_j}$, and the nearest point mapping to $X_{j,k_j}$.  Also let $\Psi_j : (Y_{j,k_j},r_{k_j}^{-1}d) \to B_G(0,2L)$ be the composition of left translation by $z_{j,k_j}^{-1}$ with $\phi_{r_{k_j}}$.  Note that $\Phi_j$ and $\Psi_j$ are $\epsilon_j$-isometries where $\epsilon_j \to 0$ where $\epsilon_j$ depends on the rate of convergence of \eqref{e:GH-metr-conv}.  By construction, we have that $(\Phi_j)_*\mu_j = r_{k_j}^{-Q} \mu_{X_{j,k_j}}$.
  
  As all the $f_{j,k_j}$ are $L$-bi-Lipschitz, the function $F_j = \Psi_{j,k_j} \circ f_{j,k_j} \circ \Phi_{j,k_j} : B_G(0,1) \to G$ satisfies the quasi-isometry conditions
  \begin{align*}
    \frac{1}{L}d(x,y) - (L+1) \epsilon_j \leq d(F_j(x),F_j(y)) \leq Ld(x,y) + (L+1) \epsilon_j.
  \end{align*}
  As $\epsilon_j \to 0$ and $F_j(B_G(0,1)) \subseteq B_G(0,2L)$, we can use an Arzela-Ascoli argument to get a subsequence that ``uniformly converges'' to a $L$-bi-Lipschitz map $f : B_G(0,1) \to G$.
  
  We also have by construction that
  $$(F_j)_* \mu_j = r_{k_j}^{-Q} (\phi_{r_{k_j}})_* \mu_{z_{j,k_j}^{-1}f_{j,k_j}(X_{j,{k_j}})}.$$
  By \eqref{e:y-conv}, we have that $(F_j)_* \mu_j$ converges weakly to $g\cL|_{f(B_G(0,1))}$ where (remembering we've assumed $\alpha_1 = 1$)
  \begin{align*}
    1 \leq g \leq \beta_1.
  \end{align*}
  This together with \eqref{e:x-conv} gives that $f_*\left( \frac{1}{h} \cL \right) = g\cL|_{f(B_G(0,1))}$ and so $f$ has Jacobian $\rho = h/g$.  As $g \in [1,\beta_1]$, we get
  \begin{align*}
    |\zeta^{-1}(1) \backslash \rho^{-1}([0,1])| &\overset{\eqref{e:h-1}}{=} 0, \\
    |\zeta^{-1}(\beta_0) \backslash \rho^{-1}([\beta_0,\infty))| &\overset{\eqref{e:h-beta}}{=} 0,
  \end{align*}
  a contradiction of the defining properties of $\zeta$.

  Let us now prove Claim \ref{cl:meas-conv}.

  \begin{proof}[Proof of Claim \ref{cl:meas-conv}]
    Let $A = \zeta^{-1}(1) \subset B_G(0,1)$, a measurable set that we may suppose without loss of generality has positive measure.  By the small boundary properties of the cubes, it suffices to show that $R_k$ converges to $A$ in measure.  Let $\epsilon > 0$.  We will drop the $G$ subscript because we will never use $H$ in this proof.
    
    Fix some $\epsilon > 0$.  By the Lebesgue density theorem, there exists $R > 0$ and $A' \subseteq A$ so that $|A'| > (1-\epsilon) |A|$ and
    \begin{align*}
      |B(x,r) \cap A| \geq (1-\epsilon)|B(x,r)|, \qquad \forall x \in A', r \in (0,R].
    \end{align*}
    Let $N$ be so that $\tau^{-k+1} < R$ for all $k \geq N$.  An easy density argument using \eqref{e:quasiball} and the definition of $A'$ then gives that for all $k \geq N$ and $Q \in \Delta_{-k}$ so that $A' \cap Q \neq \emptyset$, we have
    \begin{align}
      |Q| \leq \frac{|Q \cap A|}{1-c(\epsilon)} \label{e:dense-bound}
    \end{align}
    for some $c(\epsilon)$ depending only on $G$ and $\epsilon$ so that $\lim_{\epsilon \to 0} c(\epsilon) = 0$.  
    
    We will show
    \begin{align}
      |A \triangle R_k| \leq 2(\epsilon + c(\epsilon)) |A|, \qquad \forall k \geq N. \label{e:c-eps-bound}
    \end{align}
    This would prove the claim.  Thus, fix any $k \geq N$.

    The inequality \eqref{e:dense-bound} gives that there is some $\epsilon_0 > 0$ so that if $\epsilon < \epsilon_0$ and $Q \in \Delta_{-k}$ is so that $Q \cap A' \neq \emptyset$, then $Q \in \cT_k'$.  Thus, we get that $R_k \supseteq A'$ and so \eqref{e:c-eps-bound} follows if we can show $|R_k| < (1+2 \epsilon + 2c(\epsilon))|A|$.  We can assume also that $\epsilon < \epsilon_0$.

    Define
    \begin{align*}
      \cT_1 := \{Q \in \Delta_{-k} : A' \cap Q \neq \emptyset\} \text{ and } \cT_2 := \cT_k' \backslash \cT_1.
    \end{align*}
    Note that $A \backslash \bigcup_{Q \in \cT_1} Q \supseteq A \backslash A'$ and so $\left|A \cap \bigcup_{Q \in \cT_2} Q\right| < \epsilon |A|$.  However, by definition of $\cT_k'$, we get that
    \begin{align}
      \sum_{Q \in \cT_2} |Q| \leq 2 \sum_{Q \in \cT_2} |A \cap Q| \leq 2 \epsilon |A|. \label{e:T2-bound}
    \end{align}
    
    Now we have
    \begin{align*}
      |R_k| = \sum_{Q \in \cT_1} |Q| + \sum_{Q \in \cT_2} |Q| \overset{\eqref{e:dense-bound} \wedge \eqref{e:T2-bound}}{\leq} \frac{|A|}{1-c(\epsilon)} + 2\epsilon |A| \leq (1 + 2 c(\epsilon)  + 2\epsilon) |A|.
    \end{align*}
    This verifies \eqref{e:c-eps-bound} and finishes the proof of the claim.
  \end{proof}
\end{proof}

\bibliographystyle{plain}
\bibliography{BLBD}

\end{document}